\numberwithin{equation}{section}
\newtheorem{prop}{Proposition}[section]
\newtheorem{thm}[prop]{Theorem}
\newtheorem{cor}[prop]{Corollary}
\newtheorem{lem}[prop]{Lemma}
 \theoremstyle{definition}
 \newtheorem{defn}[prop]{Definition} 
 \theoremstyle{definition}
 \newtheorem{remark}[prop]{Remark} 
\newcommand{\given}{\ : \ }  
\newcommand{\pic}{\begin{tikzpicture}}
\newcommand{\epic}{\end{tikzpicture}}
\newcommand{\Inv}{\mbox{Inv}}
\newcommand{\ma}{\mathcal{A}}
\newcommand{\mb}{\mathcal{B}}
\newcommand{\mc}{\mathcal{C}}
\newcommand{\md}{\mathcal{D}}
\newcommand{\mg}{\mathcal{G}}
\newcommand{\msg}{\mathcal{SG}}
\newlength{\cellsize} \setlength{\cellsize}{18\unitlength}
\newsavebox{\cell}
\sbox{\cell}{\begin{picture}(18,18)
\put(0,0){\line(1,0){18}}
\put(0,0){\line(0,1){18}}
\put(18,0){\line(0,1){18}}
\put(0,18){\line(1,0){18}}
\end{picture}}
\newcommand\cellify[1]{\def\thearg{#1}\def\nothing{}%
\ifx\thearg\nothing
\vrule width0pt height\cellsize depth0pt\else
\hbox to 0pt{\usebox{\cell} \hss}\fi%
\vbox to \cellsize{
\vss
\hbox to \cellsize{\hss$#1$\hss}
\vss}}
\newcommand\tableau[1]{\vtop{\let\\\cr
\baselineskip -16000pt \lineskiplimit 16000pt \lineskip 0pt
\ialign{&\cellify{##}\cr#1\crcr}}}
\begin{document}

\title[Signed Little Map]{Coxeter-Knuth graphs and a signed Little map\\
  for type $B$ reduced words}
\date{\today}
\author[S. Billey, Z. Hamaker, A. Roberts and B. Young]{Sara Billey, Zachary Hamaker, Austin Roberts \and Benjamin Young}
\thanks{Billey and Roberts were partially supported by grant DMS-1101017 from
the NSF}
\address{SB and AR: Department of Mathematics, Box 354350, University of Washington, Seattle, WA 98195, USA}
\address{ZH: Department of Mathematics, Hinman Box 6188, Dartmouth College, Hanover, NH 03755, USA} 
\address{BY: Department of Mathematics, 1222 University of Oregon, Eugene, OR 97403} 
\email{billey@math.washington.edu}
\email{zachary.hamaker@gmail.com}
\email{austinis@math.washington.edu}
\email{bjy@uoregon.edu}
\maketitle
\begin{abstract}
We define an analog of David Little's algorithm for reduced words in type B, and investigate its main properties.  In particular, we show that our algorithm preserves the recording tableaux of Kra\'{s}kiewicz insertion, and that it provides a bijective realization of the type B transition equations in Schubert calculus. Many other aspects of type A theory carry over to this new setting.  Our primary tool is a shifted version of the dual equivalence graphs defined by Assaf and further developed by Roberts.  We provide an axiomatic characterization of shifted dual equivalence graphs, and use them to prove a structure theorem for the graph of type B Coxeter-Knuth relations.
\end{abstract}


\section{Introduction}\label{s:intro}


Stanley symmetric functions $F_w$ appear in the study of reduced words
of permutations \cite{stanleysymm}, the representation theory of
generalized Specht modules \cite{kraskiewicz}, and the geometry of
positroid varieties \cite{knutson.lam.speyer}.  The $F_w$ are known to
have a Schur positive expansion with coefficients determined by the
Edelman-Greene correspondence.  This correspondence associates to each
reduced word a pair of tableaux $(P,Q)$ of the same shape where the
second tableau is standard. These symmetric functions $F_w$ can be
defined as the sum of certain fundamental quasisymmetric functions
where the sum is over all reduced words for $w\in S_n$, denoted
$R(w)$.  In particular, the coefficient of $x_1 x_2 \cdots x_{\ell(w)}
$ in $F_w$ equals $|R(w)|$.  There is a recurrence relation for $F_w$
derived from Lascoux and Sch\"utzenberger's transition equation for
Schubert polynomials \cite{lascouxschutzenbergerschubert} of the form
$$
F_w = \sum_{w' \in T(w)} F_{w'},
$$
along with the base cases that $F_w$ is a single Schur function if $w$
has at most one descent; in this case we say $w$ is
\textit{Grassmannian}.  By taking the coefficient of $x_1 x_2 \cdots x_{\ell(w)}
$ on both sides of the
recurrence, we see that the sets $R(w)$ and $\cup_{w' \in T(w)} R(w')$
are equinumerous.

David Little gave a remarkable bijection between $R(w)$ and $\cup_{w'
\in T(w)} R(w')$ \cite{little2003combinatorial} inspired by the
lectures of Adriano Garsia, which are published as a book
\cite{garsia2002saga} .  This algorithm is a finite sequence of steps, 
each of which decrements one letter in the word. If ever a 1 is
decremented to a 0, then instead the whole reduced word is lifted up
by one to make space for one extra generator.  This bijection is an
instance of a more general phenomenon known as \emph{Little bumps}.  


Recently, Hamaker and Young \cite{hamaker2012relating} have shown that
Little bumps preserve the 
recording tableaux under the Edelman-Greene
correspondence. This proved a conjecture of Thomas Lam
\cite[Conj. 2.5]{lam2010stanley}.  They further show that all reduced
words 
 with a given 
recording tableau $Q$ under the Edelman-Greene
correspondence are connected via Little bumps.  Edelman and Greene
gave a refinement on the Coxeter relations in type $A$, which they
call 
\textit{Coxeter-Knuth} relations.  These relations
preserve the 
insertion tableaux under the Edelman-Greene correspondence, and
the set of reduced words which have a fixed 
insertion tableau $P$ is connected by elementary
Coxeter-Knuth relations.  Hamaker and Young further showed that two
reduced words that differ by an elementary Coxeter-Knuth relation give
rise to $Q$ tableaux that differ in exactly two positions.  This can
be made more precise.  Consider the graph $CK_A(w)$ on all reduced
words for $w$ with an edge labeled $i$ between two reduced words
$a=a_1a_2\cdots a_p$ and $b=b_1b_2\cdots b_p$ whenever $a$ and $b$
differ by an elementary Coxeter-Knuth relation in positions
$i,i+1,i+2$.  Call $CK_A(w)$ a \textit{Coxeter-Knuth} graph.  Using
the theory of dual equivalence graphs due to Assaf
\cite{assaf2010dual} and the equivalent axioms given by Roberts
\cite{roberts2013dual}, one can easily show that $CK_{A}(w)$ is a dual
equivalence graph and the $Q$ tableaux for two reduced words differing
by an elementary Coxeter-Knuth move differ by one of Haiman's dual
equivalence moves \cite{haiman1992dual}.

In this paper, we define the analog of the Little bump
$B^{\delta}_{(i,j)}$ on reduced words for the signed permutations
$B_{n}$, and show that these maps satisfy many of the same
properties as in the original case.  The superscript $\delta \in \{+,-\}$ denotes the direction of the bump, and the subscript $(i,j)$ indicates the crossing where the bump begins.
 In particular, there is a close
connection to the Stanley symmetric functions for types B and C
defined in \cite{BH}, see also \cite{FK2,lam1995b}.  These Stanley
symmetric functions again satisfy a transition equation
\cite{billey1998transition}, which proves that $R(w)$ is equinumerous
with a certain union of $R(w')$'s.  

To concretely state our first main result, we need to establish some
notation.  A signed permutation $w \in B_{n}$ is a bijection from
$\{-n,\dots -1, 1,2,\dots , n \}$ to itself such that $w(i)=-w(-i)$.
One could represent $w$ in one-line notation either by listing
$[w(-n),w(-n+1),\dots,w(-1),w(1),\dots , w(n)]$ in long form or simply
$[w(1),\dots , w(n)]$ in short form.  For example,
$[1,\bar{2},\bar{4},3,\bar{3},4,2,\bar{1}]$ and
$[\bar{3},4,2,\bar{1}]$ represent the same element in $B_{4}$ where
$-i$ is denoted $\bar{i}$.  For our purposes, we identify $v\in B_{n}$
with the element $w\in B_{n+1}$ such that $v(i)=w(i)$ for $1\leq i\leq
n$ and $w(n+1)=n+1$.  Set $B_{\infty} = \cup B_{n}$ in this
identification.  For $i<j \in \mathbb{Z}\setminus \{0 \}$, let
$t_{ij}$ be the (signed) transposition such that $t_{ij}(i)=j$,
$t_{ij}(j)=i$, $t_{ij}(-i)=-j$, $t_{ij}(-j)=-i$ and for
every integer $k \not \in \{\pm i, \pm j,0 \}$ we have $t_{ij}(k)=k$.  If $w
\in B_{\infty}$ has $w(1)<w(2)<\dots $, we say $w$ is
\textit{increasing}.  If $w$ is not increasing, let $(r<s)$ be the
lexicographically largest pair of positive integers such that
$w_{r}>w_{s}$.  Set $v=wt_{rs}$.  Let $T(w)$ be the set of all signed
permutations $w'=vt_{ir}$ for $i<r, i \neq 0$ such that $\ell(w')=\ell(w)$.

\begin{thm}\label{t:little.bijection}
Using the notation above, if $w \in B_{\infty}$ is not increasing,
then the particular Little bump $B^{-}_{(r,s)} \ : \ R(w)
\longrightarrow \bigcup_{w'\in T(w)}R(w')$ is the bijection
predicted by the transition equation for type C Stanley symmetric
functions.
\end{thm}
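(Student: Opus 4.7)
The plan is to mirror the structure of David Little's original type~$A$ argument \cite{little2003combinatorial} while systematically handling the sign data intrinsic to $B_n$ and the type~$C$ transition equation \cite{billey1998transition}. First I would fix notation: for non-increasing $w \in B_{\infty}$ with lexicographically largest descent $(r,s)$, set $v = w t_{rs}$, so the transition equation predicts that $R(w)$ is in bijection with $\bigcup_{w'\in T(w)} R(w')$ where $T(w) = \{v t_{ir} : i<r,\ i\neq 0,\ \ell(vt_{ir})=\ell(w)\}$. Taking the coefficient of $x_1 \cdots x_{\ell(w)}$ in the transition equation tells us exactly the equinumerosity that $B^{-}_{(r,s)}$ is expected to realize.

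Second, I would unpack the algorithm $B^{-}_{(r,s)}$ in terms of type~$B$ wiring diagrams, so that each step of the bump becomes a local wiring modification: decrement one letter, and, should a $0$ arise (or a sign flip be forced), apply the appropriate ``lift'' rule that is the signed analog of Little's push-up. The proof then breaks into three technical steps. (a) \emph{Termination}: show that starting at the crossing $(r,s)$ the procedure halts after finitely many local moves, using a monovariant such as the sum of positions of descents or the total sign weight, analogous to Little's descent-count argument. (b) \emph{Target identification}: by tracking the wiring diagram through the bump, show that the terminal signed permutation is $v t_{ir}$ for a uniquely determined $i<r$, $i\neq 0$, and that $\ell$ is preserved; this certifies the output lies in $R(w')$ for some $w' \in T(w)$. (c) \emph{Bijectivity}: construct the inverse as a bump $B^{+}_{(i,r)}$ with the matching crossing determined by the target $w'$, and verify that $B^{+}_{(i,r)} \circ B^{-}_{(r,s)}$ is the identity on $R(w)$ by a local-moves calculation; this is the direct analog of the fact that $B^{+}$ and $B^{-}$ are mutually inverse Little maps in the type~$A$ setting.

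The main obstacle will be step~(b), in particular the handling of strands that cross (or are reflected across) the vertical axis of symmetry which is built into the signed setting. In type~$A$ the bumping trajectory is essentially planar and the terminal transposition is visible from a single ``critical'' crossing; in type~$B$ both a strand and its mirror image must be tracked simultaneously, and the sign parameter $\delta = -$ controls which of the two possible reflected trajectories is followed. The key lemma I would aim to isolate is a ``signed Little cycle lemma'': starting at $(r,s)$ and running $B^{-}$, one encounters precisely one forced lift or sign flip, and the index $i$ appearing in $t_{ir}$ is read off from the position where that forced event occurs. Once this lemma is in hand, both the target identification and the construction of the inverse become local verifications that fit cleanly into the wiring-diagram framework developed earlier in the paper.
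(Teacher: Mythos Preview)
Your three-step outline (termination, target identification, invertibility) matches the paper's architecture, and your step~(c) is essentially what the paper does: it observes that the algorithm is reversible with $B^{+}_{(i,j)}$ as inverse, then uses the symmetry $D(v,j)=U(v,-j)$ to get surjectivity. So at the level of strategy you are on track.

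There is, however, a genuine gap in step~(b). Your proposed ``signed Little cycle lemma'' asserts that a run of $B^{-}$ encounters \emph{precisely one} forced lift or sign flip, from which the index $i$ is to be read off. This is false in type~$B$: the worked example in Section~\ref{s:signed.little} (the bump $B^{-}_{(\bar 2,1)}$ on $a=1021201$) involves the sequence $P^{-}_3,P^{-}_4,P^{+}_6,P^{+}_7,P^{-}_1,P^{+}_2,P^{+}_3,P^{+}_5$, so $\delta$ flips sign several times and both $P^{-}_3$ and $P^{+}_3$ occur. There is no single distinguished event that encodes $i$; rather, $i$ is determined only at termination by which wire the $j$-wire ends up crossed with. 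Any argument built on a one-flip lemma will not survive this example.

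Relatedly, your termination monovariant in step~(a) (``sum of positions of descents'' or ``total sign weight'') is not the right quantity and you would struggle to make it decrease under a push that changes a $0$ to a $1$ and reverses $\delta$. The paper instead introduces a single geometric tool, the \emph{lower boundary} $\partial^{k}_{(x,y)}(b)$ of the current defect crossing in the wiring diagram, and proves it moves weakly (and at key moments strictly) downward throughout the algorithm. This one monotonicity lemma does double duty: it bounds the number of pushes by $2\ell(w)$ (since each $P^{\delta}_k$ can occur at most once), and it is exactly what is used to rule out $i>j$ in the target-identification step (if $i>j$ the $i$-wire would lie above the boundary and could not be reached). I would recommend replacing your proposed monovariant and cycle lemma with this boundary argument; once you have it, both (a) and (b) fall out quickly, and (c) goes through as you describe.
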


The analog of Edelman-Greene insertion and elementary Coxeter-Knuth
relations for signed permutations were given by Kra\'skiewicz
\cite{kraskiewicz1989reduced}.  Kra\'skiewicz insertion inputs a
reduced word $a$ and outputs two shifted tableaux $(P'(a),Q'(a))$ of
the same shifted shape where the recording tableau $Q'(a)$ is
standard.  We develop some properties of the signed Little bumps and
the recording tableaux as maps on reduced words summarized in the next
theorem.

\begin{thm}\label{t:Q.preserving} Suppose $w$ and $wt_{ij}$ are signed
permutations such that $\ell(w)=\ell(w t_{ij}) + 1$.  
\begin{enumerate}
\item The Little bump $B^{\delta}_{(i,j)}$ maps
$R(w)$ to reduced words for some signed permutation $w'=wt_{ij}t_{kl}$ with $\ell(w) = \ell(w')$. 
\item Two reduced words $a$ and $b$ are connected via Little bumps if and
only if  $Q'(a)=Q'(b)$ under Kra\'skiewicz insertion.  
\item For each standard shifted tableau $Q'$, there exists a unique
reduced word $a$ for an increasing signed permutation such that $Q'(a)=Q'$.  
\end{enumerate}
\end{thm}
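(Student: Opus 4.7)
The plan is to prove the three parts in sequence, with part (3) playing a pivotal role for both the existence of canonical representatives and the backward direction of part (2).

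For part (1), the approach is to examine the signed Little bump $B^{\delta}_{(i,j)}$ as a finite sequence of single-letter decrement operations, possibly followed by a terminal ``push-up'' step. Following Little's type $A$ analysis, I would track after each elementary step how the underlying signed permutation changes. The key invariant is that each intermediate word remains reduced for a signed permutation that differs from $w$ or $wt_{ij}$ by a controlled sequence of transpositions; when the algorithm terminates, the resulting signed permutation $w'$ has the form $wt_{ij}t_{kl}$ for some indices $(k,l)$ determined by where the bump trajectory exits. The length equality $\ell(w')=\ell(w)$ follows because each decrement preserves length, while the terminal step compensates for the initial drop caused by $t_{ij}$. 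The type $B$ novelty here is bookkeeping for the sign change when the bump crosses the barred/unbarred boundary.

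For the forward direction of part (2), the strategy is to interpret each elementary step of a Little bump as either an elementary type $B$ Coxeter-Knuth move on the reduced word or a boundary step that interacts compatibly with Kra\'skiewicz insertion. Using the shifted dual equivalence graph theory developed earlier in the paper, one argues that $Q'$ is preserved along such sequences. For part (3), existence is obtained by iterating Little bumps to drive any reduced word toward one for an increasing signed permutation, decreasing at each step the lexicographically largest non-increasing pair in the underlying signed permutation. Uniqueness follows from the fact that the insertion tableau $P'(a)$ of a reduced word $a$ for an increasing signed permutation is essentially determined by the shape of $Q'(a)$, a rigidity analogous to that of Grassmannian permutations in type $A$, provable by direct analysis of Kra\'skiewicz insertion. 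The backward direction of part (2) then follows formally: given reduced words $a$ and $b$ with $Q'(a)=Q'(b)$, use Little bumps to transform both to the unique $a_0$ with the same $Q'$ that is a reduced word for an increasing signed permutation, then concatenate and invert.

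The main obstacle will be establishing the forward direction of part (2). Showing that each elementary step of a Little bump corresponds to a recording-tableau-preserving modification requires a case analysis that must handle subtleties specific to type $B$, notably the presence of the generator $s_0$ and the possibility of the bump trajectory interacting with the ``fixed axis'' of the signed permutation. A secondary obstacle is the uniqueness claim in part (3), which has no immediate type $A$ counterpart and requires a careful induction on shape using the specifics of Kra\'skiewicz insertion rather than the simpler Edelman-Greene procedure.
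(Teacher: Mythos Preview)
Your outline for parts (1), (3), and the backward direction of (2) is broadly aligned with the paper, though your description of the bump mechanism imports type $A$ intuition that does not apply: there is no terminal ``push-up'' in type $B$. The generator $s_0$ is legitimate, and when a push hits $0$ the algorithm sends it to $1$ and reverses the direction $\delta$; nothing is globally shifted. Also, the intermediate words during a bump are \emph{not} reduced---they are nearly-reduced words with a single defect---so ``each decrement preserves length'' is not the right invariant. The correct invariant, used in the paper's Theorem~\ref{thm:bump.image}, is that deleting the current defect letter always yields a reduced word for $v=wt_{ij}$, so upon termination one lands in some $R(vt_{kl})$ with $\ell=\ell(w)$.

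The genuine gap is in the forward direction of (2). Your plan is to ``interpret each elementary step of a Little bump as either an elementary type $B$ Coxeter-Knuth move \ldots\ or a boundary step that interacts compatibly with Kra\'skiewicz insertion.'' This cannot work: a single push $P^{\delta}_k$ alters exactly one letter by $\pm 1$ and typically produces a non-reduced word, whereas a Coxeter-Knuth move rearranges four consecutive letters inside $R(w)$ for a fixed $w$. There is no way to realize a push as a Coxeter-Knuth move, and the non-reduced intermediate words have no Kra\'skiewicz recording tableau at all. The paper's argument is indirect and rests on three ingredients you do not mention: (i) Little bumps preserve peak sets (Lemma~\ref{lem:peaks}); (ii) Little bumps \emph{commute} with Coxeter-Knuth moves (Lemma~\ref{lem:ck-little.commute}), verified by a local check on length-$4$ windows; and (iii) a Coxeter-Knuth move $\beta_i$ acts on $Q'$ as the shifted dual equivalence $h_i$ (Lemma~\ref{prop: beta on Q}). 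One then argues via the unique representative $U_\lambda$ with lexicographically maximal peak set: peak preservation forces $Q'(a)$ and $Q'(B(a))$ to have the same shape, and commutativity transports the sequence of $\beta_i$'s (hence of $h_i$'s) from $a$ to $B(a)$, giving $Q'(a)=Q'(B(a))$. The commutation lemma is the real engine here, and your proposal does not supply a substitute for it.
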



The Coxeter-Knuth relations given by Kra\'skiewicz lead to a type B
Coxeter-Knuth graph $CK_{B}(w)$ for each $w \in B_{\infty}$.  An
important step in proving Theorem~\ref{t:Q.preserving} is showing that
two reduced words for signed permutations that differ by an
elementary Coxeter-Knuth relation give rise to two $Q'$ tableaux that
differ by one of Haiman's shifted dual equivalence moves
\cite{haiman1992dual}.  In fact, shifted dual equivalence completely
determines the graph structure for type B Coxeter-Knuth graphs and
vice versa.  Thus, we define shifted dual equivalence graphs in
analogy with the work of Assaf and Roberts on dual equivalence graphs. 

\begin{thm}\label{t:ckg}
  Every type B Coxeter-Knuth graph $CK_{B}(w)$ is a shifted dual
  equivalence graph with signature function given via peak sets of
  reduced words.  The isomorphism is given by $Q'$ in Kra\'skiewicz
  insertion.  Conversely, every connected shifted dual equivalence
  graph is isomorphic to the Coxeter-Knuth graph for some increasing
  signed permutation.
\end{thm}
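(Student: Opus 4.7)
The plan is to prove both directions using the axiomatic characterization of shifted dual equivalence graphs (SDEGs) developed earlier in the paper, together with Theorem~\ref{t:Q.preserving}. For the forward direction, I would first verify that $CK_B(w)$ satisfies the SDEG axioms when the signature of a reduced word $a = a_1 \cdots a_p$ is read off from its peak set. The axioms are local: they constrain the signature change across an edge labeled $i$ and the interactions of two edges whose labels differ by at most two. Since each elementary Kra\'skiewicz Coxeter-Knuth move is itself a local rewriting on three consecutive positions, the verification reduces to a finite case check over all Kra\'skiewicz relations, including the exceptional moves involving the generator $0$. For the commutativity/braid-type axioms governing pairs of overlapping moves, one inspects all compositions supported on positions $\{i-1,i,i+1,i+2\}$.

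Once the axioms hold, Theorem~\ref{t:Q.preserving} ensures that elementary Coxeter-Knuth moves correspond to Haiman's shifted dual equivalence moves on the recording tableau, so $a \mapsto Q'(a)$ is a signature- and edge-preserving morphism from $CK_B(w)$ into the standard SDEG on shifted tableaux. Restricted to any connected component, where the insertion tableau $P'$ is constant, the map is injective because Kra\'skiewicz insertion recovers $a$ from the pair $(P'(a), Q'(a))$. As both the component and the target graph of shape $\lambda = \mathrm{shape}(P')$ are connected with matching signature data, the morphism must be an isomorphism onto the standard SDEG on shape $\lambda$.

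For the converse, the axiomatic classification implies that any connected SDEG is isomorphic to the standard SDEG on standard shifted tableaux of some shifted shape $\lambda$. By Theorem~\ref{t:Q.preserving}(3), each standard shifted tableau of shape $\lambda$ corresponds to a unique reduced word $a_\lambda$ for an increasing signed permutation $w_\lambda$. Applying the forward direction to $w_\lambda$ identifies the connected component of $a_\lambda$ inside $CK_B(w_\lambda)$ with the standard SDEG on shape $\lambda$; since $w_\lambda$ is increasing, this component exhausts $CK_B(w_\lambda)$ and yields the required isomorphism.

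The main obstacle is the case analysis in the forward axiom check. The Kra\'skiewicz type B Coxeter-Knuth relations include several moves involving the generator $0$ in addition to the usual braid-type moves, and the SDEG axioms require not only that each single move transform the peak-set signature correctly but also that every configuration of nearby moves satisfies the prescribed commuting diagrams. Identifying the correct match between local Coxeter-Knuth windows and the axiomatic diagrams, especially when the generator $0$ is involved, is the most delicate part of the argument and requires careful enumeration of the overlapping cases.
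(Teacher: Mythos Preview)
Your second paragraph already contains the paper's entire proof of the forward direction, and it stands on its own: Lemma~\ref{prop: beta on Q} gives $Q'(\beta_i(a)) = h_i(Q'(a))$, Theorem~\ref{lem:same.peaks} gives $peaks(a)=peaks(Q'(a))$, and Theorem~\ref{t:kras} makes $Q'$ a bijection from each Coxeter-Knuth class onto $SST(\lambda)$. Those three facts alone show $Q'$ is an isomorphism of signed colored graphs onto a union of standard SDEGs. The paper does exactly this and nothing more. Note that the relevant input is Lemma~\ref{prop: beta on Q}, not Theorem~\ref{t:Q.preserving}; the latter concerns Little bumps and says nothing about how $\beta_i$ acts on $Q'$.

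Your first paragraph is an unnecessary detour, and as written it would not go through. The axioms in Theorem~\ref{SDEG properties} are not local signature constraints plus interactions of edges whose labels differ by at most two; they are (i) every restriction to an interval of size $\le 9$ is already a union of standard SDEGs, and (ii) edges with labels differing by more than $3$ commute. Condition~(i) is not a case check over the Kra\'skiewicz relations; verifying it for $CK_B(w)$ amounts to proving the forward direction for all signed permutations of length $\le 9$, which is precisely what you are trying to establish. Also, type~B Coxeter-Knuth moves act on \emph{four} consecutive positions, not three (Definition~\ref{d:ck.moves}). Finally, in the paper's logical order Theorem~\ref{SDEG properties} is proved \emph{after} Theorem~\ref{t:ckg}, so invoking it here would at minimum require checking there is no circularity.

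For the converse, the paper is again more direct: a connected SDEG is by definition isomorphic to some $\msg_\mu$, and by the forward direction together with~\eqref{eq:increasing} one has $\msg_\mu \cong CK_B(w(\mu))$ for the explicit increasing permutation $w(\mu)$. Your route through Theorem~\ref{t:Q.preserving}(3) reaches the same conclusion but is roundabout; note too that the first sentence of your converse paragraph uses the \emph{definition} of SDEG, not the axiomatic classification.
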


Putting Theorem~\ref{t:Q.preserving} and Theorem~\ref{t:ckg} together,
one can see that Little bumps in both type A and type B play a similar
role for Stanley symmetric functions as jeu de taquin plays in the
study of Littlewood-Richardson coefficients for skew-Schur functions.
In particular, let us say that words $a$, $a'$ \emph{communicate} if there is a sequence of Little bumps which transforms $a$ into $a'$. We will show that there is exactly one reduced word for a unique
increasing signed permutation in each communication class under Little
bumps.  

We give local axioms characterizing graphs isomorphic to shifted dual
equivalence graphs or equivalently Coxeter-Knuth graphs of type B.
We state the theorem here using some terminology that is developed in
Section~\ref{s:axioms}.

\begin{thm} \label{SDEG properties} A signed colored graph
  $\mathcal{G} = (V, \sigma, E)$ of shifted degree $[n]$ is a shifted
  dual equivalence graph if and only if the following local properties hold.
\begin{enumerate}
\item If $I$ is any interval of integers with $|I|\leq 9$, then each
  component of $\mg|_I$ is isomorphic to the standard shifted dual
  equivalence graph of a shifted shape of size up to $|I|$.
\item If $i,j \in \mathbb{N}$ with $|i-j|>3$, $(u, v) \in E_{i}$ and
  $(u,w) \in E_{j}$, then there exists a vertex $y \in V(\mg)$ such
  that $(v,y) \in E_{j}$ and $(w,y) \in E_{i}$.
\end{enumerate}
\end{thm}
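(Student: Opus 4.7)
The plan is to prove both implications of the equivalence separately. For the forward direction (necessity), I would verify that standard shifted dual equivalence graphs satisfy the two local properties directly from their construction. Property (1) follows because restricting any standard shifted dual equivalence graph to a color interval $I$ decomposes it into connected components that are themselves standard shifted dual equivalence graphs of shifted shapes of size at most $|I|$; this is inherited from the analogous decomposition of shifted tableaux by their restrictions to entries in $I$. Property (2) reflects that two of Haiman's shifted dual equivalence moves with color labels $i$ and $j$ satisfying $|i-j|>3$ act on disjoint cells of a shifted tableau, so they commute, producing the required closing vertex $y$.

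For the converse, I would proceed by induction on the number of vertices in a connected component $\mg_0$ of $\mg$. The first step is to identify the underlying shifted shape $\lambda$ of $\mg_0$. Fix a base vertex $v$ and read off its signature $\sigma(v)$; apply axiom (1) to suitably long subintervals to realize neighborhoods of $v$ as standard pieces, and use axiom (2) to reconcile the shape data coming from disjoint intervals far apart in color label. This pins down a unique shifted shape $\lambda$. I would then construct a candidate isomorphism $\varphi$ from $V(\mg_0)$ to the vertex set of the standard shifted dual equivalence graph on $\lambda$ by propagating along edges: pick a base standard shifted tableau $T$, define $\varphi(v)=T$, and whenever $(u,w)\in E_i$ with $\varphi(u)$ already defined, declare $\varphi(w)$ to be the image of $\varphi(u)$ under the shifted dual equivalence move of color $i$.

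The main obstacle is showing $\varphi$ is well defined, which amounts to checking that every closed walk in $\mg_0$ maps to the identity in the standard model. Closed walks involving only colors $i,j$ with $|i-j|>3$ close up directly by axiom (2), mirroring the commutation of the associated moves. The delicate case is walks using several colors within a window of four consecutive values, where axiom (2) does not apply. Here axiom (1) with the bound $|I|\leq 9$ is essential: any such local interaction is supported on an interval $I$ of size at most $9$, and on such an interval $\mg_0|_I$ is by hypothesis a standard piece of known shape, so the required local relations are then verified once and for all in the standard model. The bound $9$ is sharp enough to realize every minimal closed configuration involving four consecutive color labels together with a shifted cell of either type. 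Combining the two cases with the inductive hypothesis shows that $\varphi$ is a well-defined bijection of signed colored graphs preserving signatures and edges, hence an isomorphism, completing the proof.
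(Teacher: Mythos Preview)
Your forward direction is fine and matches the paper's Lemma~5.5. The gap is in the converse.

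The framework you describe---define $\varphi$ on a base vertex and propagate along edges, then verify well-definedness on closed walks---is a reasonable shape for an argument, but the step where you assert that ``any such local interaction is supported on an interval $I$ of size at most $9$'' is exactly the content of the theorem, not something you can assume. A closed walk in $\mg$ may use colors spanning a large interval, and there is no a priori reason it decomposes into commuting squares plus cycles living in size-$9$ windows. Proving that decomposition is the whole difficulty; the paper's Figure~9 exhibits a graph on degree $9$ satisfying the analogue of your hypotheses on windows of size $8$ that is \emph{not} an SDEG, so the bound is genuinely sharp and the reduction to local windows cannot be hand-waved. Your induction on the number of vertices also gives no clear way to pass to a smaller instance.

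The paper's argument is structured quite differently. It inducts on the degree $n$, so by induction both $\mg|_{[n-1]}$ and $\mg|_{[2,n]}$ are already SDEGs. Each component of $\mg^{[n-1]}$ then carries a tableau labeling of some shape $\lambda\vdash n-1$, and the first substantial step (Lemma~5.8) shows there is a \emph{unique} way to add a cell for $n$ to $\lambda$ consistent with signatures and the presence of $(n-2)$-edges. The hard work (Lemmas~5.11 and~5.12) is then to show that the $(n-3)$-edges of $\mg$ agree with $h_{n-3}$ under this labeling; this uses the second restriction $\mg|_{[2,n]}$ together with the jeu de taquin operator $\Delta$ to pin down where $n{-}3,\dots,n$ sit, via explicit analysis of slides and a finite residual case check. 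Finally, injectivity of the resulting morphism comes from Lemma~5.13, which shows any two components of $\mg^{[n-1]}$ are joined by an $(n-3)$-edge. None of this machinery---the two-sided induction, the unique extension, the $\Delta$/jeu de taquin comparison---appears in your sketch, and it is not replaceable by the closed-walk argument as written.
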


We propose that the study of Coxeter-Knuth graphs initiated in this
paper is an interesting way to generalize dual equivalence graphs to
other Coxeter group types.  For example, in type $A$, dual equivalence
graphs have been shown to be related to crystal graphs
\cite{Assaf2008crystals}.  Furthermore, the transition equation due to
Lascoux and Sch\"utzenberger follows from Monk's formula for
multiplying a special Schubert class of codimension 1 with an
arbitrary Schubert class in the flag manifold of type $A$.  The
elementary Coxeter-Knuth relations could have been derived from the
Little bijection provided one understood the Coxeter-Knuth relations
for the base case of the transition equations in terms of Grassmannian
permutations.  The transition equations for the other classical groups
follow from Chevalley's generalization for Monk's formula on Schubert
classes \cite{CHEV}.  In fact, there is a very general Chevalley
Formula for all Kac-Moody groups \cite{lenart2012chev}.

We comment on one generalization which did not work as hoped.  In type
$A$, Chmutov showed that the molecules defined by Stembridge's axioms
can be given edge labels in such a way that the graphs are dual
equivalence graphs \cite{chmutov2013preprint}.  Alas, in type B,
this does not appear to be possible.  The Kazhdan-Lusztig graph for
$B_3$ has a connected component with an isomorphism type that does not
occur for dual equivalence graphs or shifted dual equivalence graphs.
Namely, the component of $[2,1,\bar{3}]$ is a tree with 4 vertices and
3 leaves $[1,\bar{2}, \bar{3}], [\bar{2},\bar{3},1],[2,1,\bar{3}]$.

The paper proceeds as follows.  In Section~\ref{s:background}, we
review the necessary background on permutations and signed
permutations as Coxeter groups.  In Section~\ref{s:signed.little}, we
formally define the signed Little bumps and pushes.  The key tool we
use to visualize the algorithms is the wiring diagram of a reduced
word. The conclusion of the proof of Theorem~\ref{t:little.bijection}
is given in Corollary~\ref{c:little.bijection}, and
Theorem~\ref{t:Q.preserving}(1) follows directly from
Theorem~\ref{thm:bump.image}.  The relationships between Little bumps, the
recording tableaux under the Kra\'{s}kiewicz insertion,
Coxeter-Knuth moves of type B and shifted dual equivalence moves are 
discussed in Section~\ref{s:Kraskiewicz}. The main results of this
section prove Theorem~\ref{t:Q.preserving}, parts (2) and (3).  In
Section~\ref{s:axioms}, the shifted dual equivalence graphs are
equivalently defined in terms of either shifted dual equivalence moves
or Coxeter-Knuth moves proving Theorem~\ref{t:ckg}.
Theorem~\ref{t:ckg} is an easy consequence of this definition and the
machinary built up in Sections~\ref{s:background}
and~\ref{s:Kraskiewicz}.  We go on to prove many lemmas leading up to
the axiomatization of shifted dual equivalence graphs proving
Theorem~\ref{SDEG properties}.  We conclude with some interesting open
problems in Section~\ref{s:open}.

We recently learned that Assaf has independently considered shifted
dual equivalence graphs in connection to a new Schur positive
expansion of the Schur $P$-polynomials \cite{assaf2014shifted}.  In
particular, the connection between shifted dual equivalence graphs and
Little bumps is new to this article.

\section{Background}\label{s:background}

Let $W$ be a Coxeter group with generators $S=\{s_1, \ldots , s_{n}
\}$ and elementary relations $\left(s_{i} s_{j} \right)^{m(i,j)}=1$.
For $w \in W$, let $\ell(w)$ be the minimal length of any expression
$s_{a_{1}}\cdots s_{a_{q}}=w$.  If $\ell(w)=p$, we say
$s_{a_{1}}\cdots s_{a_{p}}$ is a \textit{reduced expression} and the
list of subscripts $a_{1}a_2\cdots a_{p}$ is a \textit{reduced word}
for $w=s_{a_{1}}s_{a_2}\cdots s_{a_{p}}$.  Let $R(w)$ be the set of reduced
words for $w$.

For $w \in W$, one can define a graph $G(w)$ with vertices given by the
reduced words of $w$ using the Coxeter relations.  In this graph, any
two reduced words are connected by an edge if they differ only by an
elementary relation of the form $s_{i} s_{j} s_{i}\cdots = s_{j} s_{i}
s_{j} \cdots$ where each side is a product of $m(i,j)$ generators.  It
is a well known theorem, sometimes attributed to Tits, that this graph
is connected \cite[Thm. 3.3.1]{b-b}.

\subsection{Type A}

The symmetric group $S_n$ is the Coxeter group of type $A_{n-1}$.
For our purposes, we can think of $w \in S_n$ in one-line notation as
$w=[w_{1},w_{2},\ldots, w_{n} ]$ or as $w=[w_{1},w_{2},\dots ] \in
S_\infty$ with $w_i=i$ for all $i>n$.  Let $t_{ij}$ be the
transposition interchanging $i$ and $j$ and fixing all other values.
Then right multiplication by $t_{ij}$ interchanges the values in
positions $i$ and $j$ in $w$.  

The group $S_n$ is minimally generated by the \textit{adjacent
  transpositions} $s_1,\ldots, s_{n-1}$, where $s_i =t_{i,i+1}$, with
elementary Coxeter relations
\begin{enumerate}
\item \textbf{Commutation}:  $s_i s_j =s_j s_i$ provided $|i-j|>1$,  
\item \textbf{Braid}: $s_i s_{i+1} s_i = s_{i+1} s_i s_{i+1}$.
\end{enumerate}
For example, if $w=[2,1,5,4,3]$, then 
$$R(w) = \{1343,3143,3413, 3431,4341, 4314, 4134, 1434 \}$$ 
and $G(w)$ is a cycle on these 8 vertices.

In \cite{edelman1987balanced}, Edelman-Greene (EG) gave an insertion
algorithm much like the famous Robinson-Schensted-Knuth (RSK)
algorithm for inserting a reduced word into a tableau for some
partition $\lambda=(\lambda_1\geq \lambda_2 \geq \ldots \geq
\lambda_k>0)$.  The one difference in EG insertion is that when
inserting an $i$ into a row that already contains an $i$ and $i+1$,
we skip that row and insert $i+1$ into the next row.  If one keeps track
of the recording tableau of the insertion, then the process is
invertible.  Let $P(a)$ be the EG insertion tableau for $a=a_{1}\dots
a_{p}$ and let $Q(a)$ be the recording tableau.  Define $EG(w) = \{P:
P=P(a)\ \text{ for some } a \in R(w) \}$.

For example, using EG insertion, the reduced word $1343$ inserts to
give
\ytableausetup{aligntableaux=bottom, smalltableaux}
\[
\ytableaushort{1} \raisebox{.6ex}{ $\to$ }
\ytableaushort{13}  \raisebox{.6ex}{ $\to$ }
\ytableaushort{134}  \raisebox{.6ex}{ $\to$ }
\ytableaushort{4,134}.
\]
So 
\[
\ytableausetup{aligntableaux=center, smalltableaux}
P(1343)= \ytableaushort{4,134} 
\hspace{.3in} \text{and} \hspace{.3in}
Q(1343)=\ytableaushort{4,123}  
\]
in French notation.

The EG recording tableau $Q(a)$ is a \textit{standard Young tableau}
of partition shape $\lambda$, denoted $SYT(\lambda)$.  These are
bijective fillings of the Ferrers diagram for the partition $\lambda$
with rows and columns increasing.  The \textit{row reading word} of a
standard tableau $T$ is the permutation in one-line notation obtained
by reading along the rows of $T$ in the French way, left to right and
top to bottom.  The \textit{ascent set} of $T$ is the set of all $i$
such that $i$ precedes $i+1$ in the row reading word of $T$.
Similarly, define the \textit{ascent set} of a reduced word
$a=a_1\cdots a_p$ to be $\{j: a_j<a_{j+1}\}$.  If a position is not an
ascent, it is called a \textit{descent}.

\begin{thm}\cite[Theorems 6.25 and 6.27]{edelman1987balanced}
  Fix $w \in S_{\infty}$ and $P \in EG(w)$.  Then, the recording
  tableau for EG insertion gives a bijection between $\{a\in R(w):
  P(a)=P \}$ and the set of standard Young tableaux of the same shape
  as $P$.  Furthermore, this bijection preserves ascent sets.
\end{thm}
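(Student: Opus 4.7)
The plan is to split the theorem into two parts: (i) the recording map $a \mapsto Q(a)$ is a bijection from $\{a \in R(w) : P(a) = P\}$ onto the set of standard Young tableaux of shape $\mathrm{sh}(P)$, and (ii) this bijection carries the ascent set of $a$ to the ascent set of $Q(a)$. Part (i) follows once I exhibit an inverse to EG insertion, and part (ii) reduces to local analysis of how the shape grows under two consecutive insertion steps.

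For (i), the key is to define reverse EG insertion. Given a pair $(P, Q)$ with $P \in EG(w)$ and $Q \in SYT(\mathrm{sh}(P))$ of size $p$, the cell of $Q$ labeled $p$ selects which corner of $P$ to undo. From that corner I extract a value $z$ and sweep it downward through the rows, inverting the two forward moves: if the row below contains $y$, the largest entry strictly less than $z$, and does \emph{not} contain a configuration signaling a forward skip, then I swap $y$ with $z$ and continue with $z \leftarrow y$; otherwise the row undergoes the reverse of an EG skip, in which case $z$ is decremented and the row is left unchanged. The letter falling out of row $1$ is $a_p$, and iteration on the remainder $(P', Q|_{\{1, \ldots, p-1\}})$ recovers the entire word. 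One then checks step by step that each reverse move undoes the corresponding forward move; since both directions are deterministic, this gives a two-sided inverse, matching the argument of Edelman and Greene.

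For (ii), consider two consecutive letters $a_j, a_{j+1}$ and let $c_j, c_{j+1}$ be the cells added during their insertions. The required lemma, an EG-version of the classical row-bumping lemma, reads: if $a_j < a_{j+1}$ then $c_{j+1}$ lies strictly to the right of and weakly above $c_j$, while if $a_j > a_{j+1}$ then $c_{j+1}$ lies strictly below and weakly to the left of $c_j$. Granting this, the French row-reading word of $Q$ has $j+1$ appearing after $j$ if and only if $c_{j+1}$ is weakly above and strictly right of $c_j$, which by the lemma happens exactly when $a_j < a_{j+1}$. Consequently the ascent set of $a$ equals the ascent set of $Q(a)$.

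The main obstacle is the row-bumping lemma in the presence of the skip rule. Tracking both bumping paths simultaneously, I maintain by induction on the row the invariant that path $j+1$ stays strictly to the right (respectively, left) of path $j$ depending on the sign of $a_{j+1}-a_j$. The novelty over classical RSK is that a skip step advances the inserted value from $x$ to $x+1$ without moving horizontally, so I must verify in a small case analysis that whenever a skip occurs on one of the paths, the competing path is forced into a column preserving the invariant. This works because a skip is triggered exactly when the current row already contains two consecutive values $x, x+1$, which tightly restricts how the other path can interact with that row. Once the invariant survives to the top of both paths, the inequalities on $c_j$ and $c_{j+1}$ follow, completing the proof.
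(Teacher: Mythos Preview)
The paper does not give its own proof of this statement: it is quoted as background from Edelman and Greene's original paper (Theorems~6.25 and~6.27 of \cite{edelman1987balanced}), with no argument supplied. So there is nothing in the present paper to compare your proposal against.

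That said, your sketch is essentially the standard Edelman--Greene argument. Part~(i) via reverse insertion is the right idea, though your description of when the reverse step undoes a skip is vague (``does not contain a configuration signaling a forward skip''); in the actual inverse one checks whether the row contains both $z-1$ and $z$, and if so decrements $z$ without altering the row. Part~(ii) via a bumping-path comparison is also the approach Edelman and Greene take, and your invariant is the correct one. The one place to be careful is the skip case analysis: you assert that the skip ``tightly restricts how the other path can interact with that row,'' but you have not actually carried out the cases, and this is where the EG argument differs nontrivially from classical RSK. If you intend this as a complete proof rather than a sketch, that case analysis needs to be written out; as a sketch of the known proof it is fine.
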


\begin{defn}\label{d:EG.coeffs}
Let $a_{\lambda,w }$ be the number of distinct tableaux $P \in EG(w)$
such that $P$ has shape $\lambda$.  We call these numbers the
\textit{Edelman-Greene} coefficients.    
\end{defn}

\begin{defn}\label{d:stanley.an}\cite{stanleysymm}
For $w \in S_{\infty}$ and $a =a_{1}\cdots a_{p}\in R(w)$, let $I(a)$
be the set of all increasing integer sequences $1 \leq i_{1}\leq
i_{2}\leq \dotsb \leq i_{p}$ such that $i_{j}< i_{j+1}$ whenever
$a_{j}<a_{j+1}$.  The \textit{Stanley symmetric function} $F_{w}= F^{A}_{w}$ is
defined by 
\[
F^{A}_{w} = \sum_{a \in R(w)} \sum_{i_{1}i_{2}\dotsb i_{p}\in I(a)} x_{i_{1}} x_{i_{2}}\cdots x_{i_{p}}.
\]
\end{defn}
Here, the inner summation $\sum_{i_{1}i_{2}\dotsb i_{p}\in I(a)}
x_{i_{1}} x_{i_{2}}\cdots x_{i_{p}}$ is the fundamental quasisymmetric
function indexed by the ascent set of $a$
\cite[Ch. 7.19]{stanley2001enumerative}.  Edelman-Greene showed that
the ascent set of $a \in R(w)$ agrees with the ascent set of $Q(a)$.
Furthermore, Ira Gessel \cite{gessel1984} showed that the Schur
function $s_{\lambda}$ is the sum over all standard tableaux $T$ of
shape $\lambda$ of the fundamental quasisymmetric function by the
ascent set of $T$.  Putting this together gives the following theorem.

\begin{thm}\cite[Theorem 6.27]{edelman1987balanced}
Fix $w \in S_{\infty}$.  Then 
\[
F^A_{w}  = \sum_{\lambda} a_{\lambda', w} s_{\lambda},
\]
where $s_{\lambda}$ is the Schur function indexed by the partition
$\lambda$, $\lambda'$ is the conjugate partition obtained from
$\lambda$ by counting the length of the columns in the Ferrers
diagram, and each $a_{\lambda', w}$ is a nonnegative integer given in
Definition~\ref{d:EG.coeffs}.
\end{thm}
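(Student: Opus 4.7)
The plan is to interpret $F^A_w$ as a sum of fundamental quasisymmetric functions indexed by reduced words, then reindex using the Edelman-Greene bijection, and finally apply Gessel's expansion of Schur functions. Because the theorem just restates a classical result of Edelman-Greene, the proof is essentially a chain of substitutions, and the only delicate point is keeping track of the conjugate partition.

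First, I would note that for any $a = a_1 \cdots a_p \in R(w)$ the inner sum in Definition~\ref{d:stanley.an} depends only on the ascent set of $a$ and equals the fundamental quasisymmetric function $L_{\text{Asc}(a)}$. I would then partition $R(w)$ according to the insertion tableau $P(a)$. By the Edelman-Greene theorem stated just above Definition~\ref{d:EG.coeffs}, the map $a \mapsto (P(a), Q(a))$ is a bijection from $R(w)$ onto the set of pairs $\{(P,Q) : P \in EG(w),\ Q \in SYT(\text{shape}(P))\}$, and this bijection preserves ascent sets. Therefore
\[
F^A_w \ =\ \sum_{P \in EG(w)}\ \sum_{Q \in SYT(\text{shape}(P))} L_{\text{Asc}(Q)}.
\]

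The final ingredient is Gessel's identity. Its standard descent form $s_\mu = \sum_{T \in SYT(\mu)} L_{\text{Des}(T)}$ is equivalent, via the transpose bijection on standard tableaux (which swaps ascents and descents), to the ascent-set identity $\sum_{Q \in SYT(\mu)} L_{\text{Asc}(Q)} = s_{\mu'}$. Substituting $\mu = \text{shape}(P)$ and collecting terms by shape via Definition~\ref{d:EG.coeffs} gives
\[
F^A_w \ =\ \sum_{P \in EG(w)} s_{\text{shape}(P)'} \ =\ \sum_\mu a_{\mu, w}\, s_{\mu'} \ =\ \sum_\lambda a_{\lambda', w}\, s_\lambda
\]
after reindexing by $\lambda = \mu'$. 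The main subtlety to track, and the source of the conjugate partition in the statement, is precisely that the ascent-set form of Gessel's identity produces $s_{\mu'}$ rather than $s_\mu$; this is what forces the coefficient $a_{\lambda', w}$ rather than $a_{\lambda, w}$. Everything else amounts to routine unwinding of definitions together with the Edelman-Greene bijection and its ascent-preservation property.
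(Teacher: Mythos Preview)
Your proof is correct and follows the same approach as the paper, which sketches exactly this argument in the paragraph immediately preceding the theorem: express $F^A_w$ as a sum of fundamental quasisymmetric functions over $R(w)$, use the Edelman--Greene bijection and its ascent-preservation to reindex over pairs $(P,Q)$, and apply Gessel's expansion. If anything, you are more explicit than the paper about why the conjugate $\lambda'$ appears, tracing it to the ascent-set (rather than descent-set) form of Gessel's identity.
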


Edelman-Greene also characterized when two reduced expressions give rise
to the same $P$ tableau by restricting the elementary Coxeter relations.
For this characterization, they define the \textit{elementary
Coxeter-Knuth relations} to be either a braid move or a witnessed
commutation move:
\begin{enumerate}
\item $i k j \leftrightarrow k i j $ for all $i<j<k$,
\item $j i k  \leftrightarrow j k i  $ for all $i<j<k$,
\item $i (i+1) i \leftrightarrow (i+1) i (i+1) $.
\end{enumerate}
Two words which are connected via a sequence of Coxeter-Knuth relations
are said to be in the same \textit{Coxeter-Knuth class}. 

\begin{thm}\cite[Theorem 6.24]{edelman1987balanced}\label{thm:EG.CK}
Let $a,b\in R(w)$.  Then $P(a)=P(b)$ if and only if $a$ and $b$ are in
the same Coxeter-Knuth class.   
\end{thm}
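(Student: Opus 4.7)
The plan is to prove both directions separately, following the template of the analogous result for RSK insertion and Knuth equivalence. For the forward direction --- that CK-equivalent words yield equal $P$-tableaux --- it suffices to verify that each of the three elementary CK moves is preserved by EG insertion. This is a direct local case analysis: take any reduced word $a$ whose positions $j, j+1, j+2$ match a CK pattern, insert the prefix $a_1 \cdots a_{j-1}$ to an intermediate tableau $T$, and check that inserting the three letters on either side of the CK relation into $T$ yields the same final tableau. The cases split on where each letter bumps to, with the delicate subcase being when the EG row-skipping rule activates because a row already contains both $i$ and $i+1$. The form of the witnessed commutation relations is tailored precisely so that the resulting tableau is invariant.

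For the converse, the strategy is to show that every $a \in R(w)$ is CK-equivalent to a canonical representative depending only on $P(a)$, for instance the column reading word of $P(a)$ obtained by reading each column bottom-to-top starting from the leftmost column. Granted this, $P(a) = P(b)$ forces $a$ and $b$ to be CK-equivalent to the same canonical word and hence to each other. The canonical-word claim is proved by induction on the length $p$ of $a$: by induction, $a_1 \cdots a_{p-1}$ is CK-equivalent to the reading word of $T' = P(a_1 \cdots a_{p-1})$, so it suffices to simulate the insertion of $a_p$ into $T'$ by a sequence of CK moves on the concatenated word. Each elementary row-insertion step --- pushing $a_p$ through row $1$, then the bumped element through row $2$, and so on --- should be realized by finitely many commutation and braid moves applied locally at the boundary between the row being bumped into and the remainder of the reading word.

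The main obstacle is the row-insertion-as-CK-moves lemma underlying the inductive step. Given a row $r_1 \cdots r_k$ and an inserted element $x$, one must show that the word $r_1 \cdots r_k \, x$ is CK-equivalent to the word representing the row after the bump. In the generic case the transformation is into $r_1 \cdots r_{m-1}\, x\, r_{m+1} \cdots r_k\, r_m$, where $r_m$ is the bumped entry, and the CK equivalence can be built from witnessed commutations $jik \leftrightarrow jki$ shifting $x$ leftward past the larger entries. The subtle case is when a row already contains adjacent letters $i, i+1$ and the row-skipping rule promotes the insertion to $i+1$ into the next row; here the braid relation $i(i+1)i \leftrightarrow (i+1)i(i+1)$ is essential, and this is precisely what explains why Edelman and Greene's CK relations take the form they do. Once this row-insertion lemma is in place, chaining it across successive rows and iterating over $p$ completes the induction and yields the theorem.
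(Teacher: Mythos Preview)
The paper does not prove this theorem; it is stated as background and attributed to Edelman and Greene's original paper \cite{edelman1987balanced}, with no proof given. So there is no in-paper argument to compare against.

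That said, your outline matches the standard Edelman--Greene argument closely: verify the forward direction by a local case check that each elementary Coxeter--Knuth move leaves the insertion tableau unchanged (the braid case being the one that forces the special EG bumping rule), and handle the converse by showing every reduced word is CK-equivalent to a canonical reading word of its own $P$-tableau, via an induction that simulates each row insertion by a chain of CK moves. This is exactly the structure of the proof in \cite{edelman1987balanced}.

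One small caution on the inductive step: be consistent about which reading word you choose and where the bumped element lands in the word. Edelman and Greene work with the row reading word (top row first in French notation), so that after bumping $r_m$ out of a row, $r_m$ sits to the \emph{left} of the modified row, ready to be inserted into the row above. Your display places $r_m$ at the right end, which corresponds to a different reading convention; either can be made to work, but the CK moves you need (and their witnesses) depend on the orientation, so fix one and stick with it throughout.
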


In the example $w=[2,1,5,4,3]$, there are three Coxeter-Knuth classes
$\{3143,3413 \}$,
 $\{3431, 4341, 4314 \}$, and  $\{1343, 4134, 1434 \}$,  which respectively insert
to the three $P$ tableaux: 
\[
\ytableausetup{aligntableaux=bottom, smalltableaux}
\ytableaushort{34,13} \hspace{.2in}
\ytableaushort{4,3,14}\hspace{.2in}
\ytableaushort{4,134}.
\]

Let $CK_A(w)$ be the \textit{Coxeter-Knuth graph} for $w \in S_\infty$
with vertices $R(w)$ and colored (labeled) edges constructed using
Coxeter-Knuth relations.  An edge between $a$ and $b$ is labeled $i$
if $a_{j}=b_{j}$ for all $j \not \in \{i,i+1,i+2 \}$ and
$a_{i}a_{i+1}a_{i+2}$ and $b_{i}b_{i+1}b_{i+2}$ differ by an
elementary Coxeter-Knuth relation.  To each vertex $a \in R(w)$
associate a signature determined by its ascent set, $\sigma(a) = \{j:
a_j<a_{j+1}\}$.  If $\ell(w)=p$, we denote a subset $S\subset \{1,\ldots,
p-1\}$ by a sequence in $\{+,-\}^{p-1}$ where $+$ in the $j^{th}$
position means $j \in S$. Here $\sigma_j(a) =+$ if $a_j<a_{j+1}$ and
$\sigma_j(a) =-$ if $a_j>a_{j+1}$.  See Figure~\ref{fig:21543} for an example and compare to $G(21543)$, which is a cycle with eight vertices, as mentioned above.

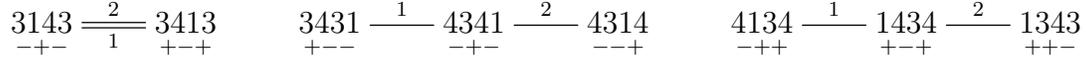
\begin{figure}[ht]
\ytableausetup{aligntableaux=center, smalltableaux}
\[
 \xymatrix{
 \underset{-+-}{3143}\ar@{=}[r]^{2}_{1}&  \underset{+-+}{3413}
 &
  \underset{+--}{3431} \ar@{-}[r]^{1} &  \underset{-+-}{4341} \ar@{-}[r]^{2} &  \underset{--+}{4314} 
&
  \underset{-++}{4134} \ar@{-}[r]^{1} &  \underset{+-+}{1434} \ar@{-}[r]^{2} & \underset{++-}{1343} 
 }
 \]
  \caption{\label{fig:21543} The Coxeter-Knuth graph of $w=[2,1,5,4,3]$.}
\end{figure}


Let $\alpha_i$ be the involution defined by the edges of $CK_A(w)$,
namely $\alpha_i(a)=b$ provided $a$ and $b$ are connected by an edge
colored $i$, or equivalently an elementary Coxeter-Knuth relation on
positions $i,i+1,i+2$.  If $a$ is not contained in an $i$-edge, then
define $\alpha_i(a) =a$.

The type $A$ Coxeter-Knuth graphs are closely related to dual
equivalence graphs on standard tableaux as defined by Assaf
\cite{assaf2010dual}.  For a partition $\lambda$, one defines a
\textit{standard dual equivalence graph} $\mg_{\lambda}$ to be the
graph with vertex set given by $SYT(\lambda )$, and an edge colored
$i$ between any two tableaux that differ by an elementary dual
equivalence defined as follows.

\begin{defn}\cite{haiman1992dual}
  Given a permutation $\pi\in S_n$, define the \emph{elementary dual
    equivalence operator} $d_i$ for all $1\leq i\leq n-2$ as follows.
  Say $\{i,i+1,i+2\}$ occur in positions $a<b<c$ in $\pi$, then
  $d_i(\pi)=\pi t_{ac}$ provided $\pi(b) \neq i+1$ and $d_i(\pi)=\pi$
  otherwise.  Dual equivalence operators also act on standard tableaux
  by acting on their row reading word. 
\end{defn}

It was observed by the first author that the following theorem holds
by combining the work in the original version of
\cite{hamaker2012relating} and \cite{roberts2013dual}.  This was the
start of our collaboration.

\begin{thm}\cite[Thm.  1.3]{hamaker2012relating}
  The graph $CK_{A}(w)$ is isomorphic to a disjoint union of standard
  dual equivalence graphs for each $w \in S_{n}$. The isomorphism
  preserves ascent sets on vertices.  On each connected component, the
  Edelman-Green $Q$ function provides the necessary isomorphism.
  Furthermore, ascent sets are preserved.
\end{thm}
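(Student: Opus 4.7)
The plan is to combine Edelman and Greene's theorem on Coxeter--Knuth classes with a local analysis of elementary moves. By Theorem~\ref{thm:EG.CK}, the connected components of $CK_A(w)$ are exactly the fibers $C_P := \{a \in R(w) : P(a) = P\}$ as $P$ ranges over $EG(w)$. Fix one such fiber and let $\lambda$ be the shape of $P$. The Edelman--Greene theorem quoted just before Definition~\ref{d:EG.coeffs} ensures that $a \mapsto Q(a)$ restricts to a bijection $C_P \to SYT(\lambda)$ which preserves ascent sets. Hence the remaining task is to show that $Q$ carries the $i$-colored edges of $CK_A(w)$ restricted to $C_P$ onto the $i$-colored edges of the standard dual equivalence graph $\mg_\lambda$.

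To do this, I would translate both edge relations into statements about the positions of the entries $i, i+1, i+2$. Haiman's operator $d_i$ acts on a tableau $Q \in SYT(\lambda)$ by locating those three entries in the row reading word at positions $a<b<c$ and swapping positions $a$ and $c$ unless the middle label is $i+1$, in which case it fixes $Q$. On the reduced-word side, the cells of $Q(a)$ labeled $i, i+1, i+2$ are exactly the cells created when $a_i, a_{i+1}, a_{i+2}$ are EG-inserted into the common intermediate tableau $P(a_1 \cdots a_{i-1})$, which coincides with $P(b_1 \cdots b_{i-1})$ for any CK neighbor $b = \alpha_i(a)$. Since the CK move does not affect $a_j$ for $j > i+2$, and since $P(a) = P(b)$ by Theorem~\ref{thm:EG.CK}, the three new cells produced are the same set in both cases; only their creation order may differ. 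Matching this to $d_i$ reduces to showing that the two orderings are related by swapping the first and third creations precisely when the second creation does not carry the label $i+1$.

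To verify this local claim, I would run through a finite case analysis of EG insertion applied to the three forms of elementary Coxeter--Knuth move, using that the behavior of insertion depends only on a few cells of $P(a_1 \cdots a_{i-1})$ near the relevant columns. The main obstacle is the braid case $i(i+1)i \leftrightarrow (i+1)i(i+1)$, where EG's ``skip a row'' rule produces genuinely different bumping paths than classical RSK, so the identification of the swapped cells requires careful bookkeeping through the intermediate insertions. Once those cases are dispatched, assembling the fiberwise isomorphisms yields $CK_A(w) \cong \bigsqcup_{P \in EG(w)} \mg_{\mathrm{shape}(P)}$, and ascent set preservation on vertices transports from the EG theorem through this identification.
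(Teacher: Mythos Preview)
The paper does not actually prove this statement: it is quoted as \cite[Thm.~1.3]{hamaker2012relating} and the surrounding text only says it ``holds by combining the work in the original version of \cite{hamaker2012relating} and \cite{roberts2013dual},'' i.e.\ by verifying the Assaf--Roberts axioms for dual equivalence graphs on $CK_A(w)$. Your direct approach---show $Q\circ\alpha_i = d_i\circ Q$ by case analysis on the three elementary Coxeter--Knuth moves---is correct and more self-contained than the axiomatic route the paper alludes to; the trade-off is exactly the braid-case bookkeeping you identify.

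It is worth noting that the paper's proof of the type~B analog, Lemma~\ref{prop: beta on Q}, avoids that bookkeeping entirely. Rather than analyzing insertion of three (or four) letters into an arbitrary intermediate tableau, it uses Lemma~\ref{lem: delta on Q} (deletion of the first letter corresponds to a jeu de taquin slide on $Q'$) together with Lemma~\ref{lem: slides and h commute} (slides commute with $h_i$) to reduce to the case $i=1$, $p=4$, where the claim is forced because there are only two standard shifted tableaux of shape $(3,1)$. The same reduction works verbatim in type~A: delete $a_1,\dots,a_{i-1}$ via the type~A analog of $\Delta$, use commutation of jeu de taquin with $d_i$, and reduce to the trivial observation that the two standard Young tableaux of shape $(2,1)$ are swapped by $d_1$. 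This would let you bypass the EG skip-row analysis you flag as the main obstacle.
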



\subsection{Type B/C}

The hyperoctahedral group, or signed permutation group $B_n$ is also a
finite Coxeter group.  This group is the Weyl group of both the root
systems of types B and C of rank $n$.  Recall from
Section~\ref{s:intro} that we have defined the \textit{(signed)
  transposition} $t_{ij}$ to be the signed permutation interchanging
$i$ with $j$ and $-i$ with $-j$ for all $i,j\neq 0$.  The group $B_n$
is generated as a Coxeter group by the adjacent transpositions $s_1,
\ldots, s_{n-1}$ with $s_i =t_{i,i+1}$ plus an additional generator
$s_0=t_{-1,1}$. Thus, if $w=[w_{1},\dots , w_{n}] \in B_n$, then
$ws_0=[-w_1,w_2,\ldots,w_n]$.  For example,
$[\bar{3},2,1]=s_1s_2s_1s_0 \in B_3$.  Again, let $R(w)$ denote the
set of all reduced words for $w$.  Note that if $w \in S_{n}$, then it
can also be considered as an element in $B_{n}$ with the same reduced
words.  The \textit{elementary relations} on the generators are given
by
\begin{enumerate}
\item \textbf{Commutation}:  $s_i s_j =s_j s_i$ provided $|i-j|>1$,  
\item \textbf{Short Braid}: $s_i s_{i+1} s_i = s_{i+1} s_i s_{i+1}$ for all $i>0$, 
\item \textbf{Long Braid}: $s_0 s_{1} s_0 s_1 = s_{1} s_0 s_{1}s_0$.
\end{enumerate}

Shifted tableaux play the same role in types B/C as the usual
tableaux play in type $A$.  Given a strict partition $\lambda =
(\lambda_{1}>\lambda_{2}>\dotsb >\lambda_{k}>0)$, the \textit{shifted
  shape} $\lambda$ is the set of squares in positions $\{(i,j) \given
1 \leq i\leq k, i\leq j\leq \lambda_{i}+i-1 \}$.  A \textit{standard
  shifted tableau} $T$ is a bijective filling of a shifted shape with
positive integers with rows and columns increasing. For example, see
$T$ in Figure~\ref{fig: tableaux}.  Let $SST(\lambda)$ be the set of
standard shifted tableaux of shifted shape $\lambda$.

\begin{figure}[h]
 \ytableausetup{aligntableaux=bottom}
S=
\begin{ytableau}
7\\
5&6&8\\
1&2&3&4&9
\end{ytableau},   \hspace{.3in}
 T=
\begin{ytableau}
\none&\none&7\\
\none&3&6&8\\
1&2&4&5&9
\end{ytableau},    \hspace{.3in}
U = \begin{ytableau}
\none&\none&0\\
\none&2&1&0\\
3&2&1&0&1
\end{ytableau}
\caption{A standard tableau $S$, standard shifted tableau $T$ and unimodal
  tableau $U$ respectively of shape $\lambda=(5,3,1)$.
\label{fig: tableaux}
}
\end{figure}

We will also need to consider another type of tableaux on shifted
shapes.  We say a list $r = r_1 \dots r_l$ is \emph{unimodal} if there
exists an index $j$, referred to as the \textit{middle}, such that
$r_1 \dots r_j$ is decreasing and $r_j \dots r_l$ is increasing.  A
\textit{unimodal tableau} $T$ is a filling of a shifted shape with
nonnegative integers such that the reading word along each row is
unimodal. 

In 1989, Kra\'skiewicz\cite{kraskiewicz1989reduced} gave an analog of
Edelman-Greene insertion for reduced words of signed permutations.
Kra\'skiewicz insertion is a variant of the mixed shifted insertion
of~\cite{haiman1989mixed} that maps a reduced word $b$ of a signed
permutation to the pair of shifted tableaux $(P'(b),Q'(b))$ where
$Q'(b)$ is a standard shifted tableau and $P'(b)$ is a unimodal
tableau of the same shape such that the reading word given by reading
rows left to right from top to bottom is a reduced word for $w$.
Once again, there is an analog of the Coxeter-Knuth relations.  We
will need the details of this insertion map and relations for our main
theorems.  Our description of this map is based on an equivalent
algorithm in Tao Kai Lam's Ph.D. thesis \cite{lam1995b}.

First, there is an algorithm to insert a non-negative integer into a
unimodal sequence.  Given a number $k$ and a (potentially empty)
unimodal sequence $r = r_1 \dots r_l$ with middle index $j$, we insert
$k$ into $r$ and obtain another unimodal sequence as follows:

\begin{enumerate}

\item If $k \neq 0$ or $r_j \neq 0$, perform Edelman-Greene insertion
of $k$ into $r_{j+1} \dots r_l$. Call the bumped entry $k^-$, if it
exists.  Call the resulting string after insertion $v_{1}\dots v_{q}$. 
Note, $q$ may be $l-j$ or $l-j+1$.  

\item If $k = 0$ and $r_j =0$, set $k^- = 1$.  Set $v_{1}\dots v_{q}=r_{j+1} \dots r_l$.

\item If $k^-$ exists, perform Edelman-Greene insertion of $-k^-$ into
$-r_1 \dots -r_j$. This time a bumped entry $-r_{i}$ will exist, as $k^- > r_j$. Set 
$k'=r_{i}$.  Set $u_{1} \dots u_j$ to be the result of negating every entry
in the resulting string after insertion and reversing it. 

\item If $k^-$ does not exist, set $u_{1} \dots u_j=r_{1} \dots r_j$.

\item Output the unimodal sequence $u_{1} \dots u_jv_{1}\dots v_{q}$
  and $k'$ if it exists.  
\end{enumerate}

The \textit{Kra\'skiewicz insertion} of a non-negative integer $k$ into a shifted
unimodal tableau $P'$ starts by inserting $k$ into the first row of
$P'$ using the algorithm above.  Replace the first row of $P'$ by
$u_{1}\dots u_{p}v_{1}\dots v_{q}$.  If $k^{-}$ exists and $k'$ is the
output, then insert $k'$ in the second row of $P'$, etc.  Continue
until no output exists or no further rows of $P'$ exist.  In that
case, add $k'$ in a new final row along the diagonal so the result is
again a shifted unimodal tableau.  Call the final tableau
$P'\leftarrow k$.  For $w\in B_{\infty}$ and $a=a_{1}\dotsb a_{p}\in R(w)$,
let $P'(w)$ be the result of inserting $\emptyset \leftarrow a_{1}\dotsb
a_{p}$ consecutively into the empty shifted unimodal tableau denoted
$\emptyset$.

For example, using Kra\'skiewicz insertion, on the same reduced word $1343$ as before inserts to
give
\ytableausetup{aligntableaux=bottom, smalltableaux}
\[
\ytableaushort{1} \raisebox{.6ex}{ $\to$ }
\ytableaushort{13}  \raisebox{.6ex}{ $\to$ }
\ytableaushort{134}  \raisebox{.6ex}{ $\to$ }
\ytableaushort{\none  1,434}.
\]
So 
\[
\ytableausetup{aligntableaux=bottom, smalltableaux}
P'(1343)= \ytableaushort{\none  1,434} 
\hspace{.3in} \text{and} \hspace{.3in}
Q'(1343)=\ytableaushort{\none  4,123}.  
\]
\vspace{.3in}
Also, $0 2 1 0 3 2 1 0 1 \in R([\bar{3},\bar{4}, \bar{1}, 2])$ gives
$P'=U, Q'=T$ from Figure~\ref{fig: tableaux}.

Kra\'skiewicz insertion behaves well with respect to the peaks of a
reduced word.  Given any word $a_{1}\dots a_{p}$, we say $a$ has an
\textit{ascent} in position $0<i<p$ if $a_{i}<a_{i+1}$ and a
\textit{descent} if $a_{i}>a_{i+1}$.  Similarly, we say $a$ has a
\textit{peak} in position $1<i<p$ if $a_{i-1}<a_{i}>a_{i+1}$.  Define
the \textit{peak set} of $a \in R(w)$ to be $peaks(a)=\{1<i<p \given
a_{i-1}<a_{i}>a_{i+1} \}$.  For example, $peaks(4565)=\{3\}$ and
$peaks(7267)=\emptyset$.  Recall that standard tableaux have
associated ascent sets and descent sets as well as defined just before
Theorem~\ref{edelman1987balanced}.  Given a standard (shifted) tableau
$T$, we say $j$ is a peak of $T$ provided $j$ appears after $j-1$ and
$j+1$ in the row reading word of $T$, so there is an ascent from $j-1$
to $j$ and a descent from $j$ to $j+1$.  The peak set of $T$, denoted
again $peaks(T)$, is defined similarly.

\begin{thm}\cite[Theorem 2.10]{lam1995b}
\label{lem:same.peaks}
Given a signed permutation $w$ and a reduced word $a\in R(w)$, $peaks(a)=peaks(Q^\prime(a))$.
\end{thm}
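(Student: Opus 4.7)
The plan is to proceed by induction on the length $p$ of the reduced word $a = a_1 \cdots a_p$. The base cases $p \leq 2$ are vacuous, since both $peaks(a)$ and $peaks(Q'(a))$ are empty in the absence of an interior position or an interior value. For $p \geq 3$, set $a' = a_1 \cdots a_{p-1}$. By construction of Kra\'skiewicz insertion, $Q'(a')$ is obtained from $Q'(a)$ by deleting the cell containing $p$, so the relative reading-word order of the values $1, \ldots, p-1$ is identical in the two tableaux. Hence for each position $i$ with $2 \leq i \leq p-2$, one has $i \in peaks(Q'(a))$ iff $i \in peaks(Q'(a'))$, and likewise $i \in peaks(a)$ iff $i \in peaks(a')$ (the latter depending only on $a_{i-1}, a_i, a_{i+1}$). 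The inductive hypothesis $peaks(a') = peaks(Q'(a'))$ then handles all positions through $p-2$, reducing the problem to the single position $p-1$.

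To handle $p-1$, the central tool is a \emph{cell-placement lemma}: for every $k$ with $2 \leq k \leq p$, the cell labelled $k$ in $Q'(a)$ precedes the cell labelled $k-1$ in the reading word iff $a_{k-1} > a_k$, and follows it iff $a_{k-1} < a_k$ (equality being ruled out since $a$ is reduced). Granting the lemma, the peak condition $a_{p-2} < a_{p-1} > a_p$ translates, via applications with $k = p-1$ and $k = p$, into: cell $p-1$ follows both cell $p-2$ and cell $p$ in the reading word, which is precisely the condition for $p-1$ to be a peak of $Q'(a)$. The converse is symmetric: if $a_{p-2} > a_{p-1}$ then cell $p-1$ precedes cell $p-2$, and if $a_{p-1} < a_p$ then cell $p-1$ precedes cell $p$, so in either case $p-1$ fails the peak condition in $Q'(a)$.

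To establish the cell-placement lemma I would induct on $k$, directly analysing the five-step rule for inserting $a_k$ into $P'(a_1 \cdots a_{k-1})$. The guiding heuristic, familiar from Schensted's monotonicity of bumping paths and its Edelman-Greene refinement, is that a strictly larger inserted value produces a strictly shallower bumping path, placing the new cell in a position that reads before the previous one (in French notation), while a strictly smaller value produces a deeper path. \textbf{The main obstacle} is the case analysis intrinsic to Kra\'skiewicz insertion: Case 2 of the sub-algorithm, in which inserting $0$ into a row whose middle entry is $0$ triggers a reverse Edelman-Greene insertion into the negated decreasing portion, requires separate treatment, as does the ``skip'' rule when inserting $i$ into a row already containing both $i$ and $i+1$, and the rolling of cells along the shifted diagonal. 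Each branch must be verified to respect the monotonicity claim. An alternative would be to appeal to existing results of Lam on how Kra\'skiewicz insertion interacts with descents, bypassing the direct algorithmic case analysis at the cost of invoking an external tool.
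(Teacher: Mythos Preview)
The paper does not supply its own proof of this statement; it is quoted as Theorem~2.10 of Lam's thesis and used as a black box.  So there is no argument in the paper to compare against.

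Your proposal, however, has a genuine gap: the cell-placement lemma is false for Kra\'skiewicz insertion.  You assert that the cell labelled $k$ in $Q'(a)$ precedes the cell labelled $k-1$ in the reading word if and only if $a_{k-1} > a_k$.  That statement is the type~$A$ (Edelman--Greene) descent-preservation property, and it does \emph{not} survive the passage to type~$B$.  Kra\'skiewicz insertion preserves peak sets, not individual ascents and descents; the insertion of a letter into a unimodal row first bumps rightward into the increasing tail and then leftward into the decreasing head, so the monotonicity-of-bumping-paths heuristic you invoke breaks down.

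A concrete counterexample is already present in the paper's running example.  Take $a = 0\,2\,1\,0\,3\,2\,1\,0\,1$, whose recording tableau $Q'(a)$ is the tableau $T$ of Figure~\ref{fig: tableaux}, with reading word $7\,3\,6\,8\,1\,2\,4\,5\,9$.  At $k=4$ we have $a_3 = 1 > a_4 = 0$, a descent; your lemma predicts cell~$4$ precedes cell~$3$.  But cell~$3$ sits in row~$2$ and cell~$4$ in row~$1$, so in the (French, top-to-bottom) reading word cell~$3$ comes first.  The same failure recurs at $k=8$.  Thus the reduction to a single position $p-1$ via a descent-matching lemma cannot work; any correct argument must track peak information directly rather than decomposing it into separate ascent/descent comparisons.
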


One important tool for studying Kra\'skiewicz insertion is a
family of local transformations on words known as the \textit{type B
  Coxeter-Knuth moves}.  These moves are based on certain type B
elementary Coxeter relations that depend on exactly four adjacent
entries of a word.

\begin{defn}\label{d:ck.moves}\cite{kraskiewicz1989reduced} 
  The \textit{elementary Coxeter-Knuth moves of type B} are given by
  the following rules on any reduced word $i_{1}i_{2}i_{3}i_{4}$.  If
  $i_{1}i_{2}i_{3}i_{4}$ has no peak then $\beta
  (i_{1}i_{2}i_{3}i_{4})=i_{1}i_{2}i_{3}i_{4}$.  If
  $i_{1}i_{2}i_{3}i_{4}$ has a peak in position $3$, $\beta
  (i_{1}i_{2}i_{3}i_{4})$ is given by reversing $\beta
  (i_{4}i_{3}i_{2}i_{1})$.  If $i_{1}i_{2}i_{3}i_{4}$ has a peak in
  position 2, then we have three cases:

\begin{enumerate}
\item \textbf{Long braid}: If $i_{1}i_{2}i_{3}i_{4}=0101$, then define
$\beta (0101) = 1010$.  Note $1010$ is another reduced word for the
same signed permutation, and it has a peak in position 3.

\item \textbf{Short braid witnessed by smaller value}: If there are 3
  distinct letters among $i_{1}i_{2}i_{3}i_{4}$ and there is a
  corresponding short braid relation specifically of the form 
  $i_{1}i_{2}i_{3}i_{4} = a\ b+1 \ b \ b+1$ or $b \ b+1 \ b\ a$ for
  some $a<b$.  Define
\begin{align}\label{e:rule.2}
\beta (a\ b+1 \ b \ b+1) &=a\ b \ b+1 \ b  \text{  and  }\\
\beta (b \ b+1 \ b\ a) &= b+1 \ b \ b+1\ a. 
\end{align}

Again the sequence $\beta(i_{1}i_{2}i_{3}i_{4}) $ has a peak in
position 3 since $a<b$.  Also, this word is another reduced word for the same
signed permutation which differs by a short braid move.

\item \textbf{Peak moving commutation}: In all other cases, 
\[
\beta (i_{1}i_{2}i_{3}i_{4}) = (i_{1}i_{2}i_{3}i_{4}) s_{j}
\]
for the smallest $j$ such that $(i_{1}i_{2}i_{3}i_{4}) s_{j}$ is
related to $i_{1}i_{2}i_{3}i_{4}$ by a commuting move and has peak in
position 3.  Here $s_{j}$ is the operator acting on the right by
swapping positions $j$ and $j+1$.
\end{enumerate}
\end{defn}

Observe that $i_{1}i_{2}i_{3}i_{4}$ is fixed by $\beta$ if and only if
$i_{1}i_{2}i_{3}i_{4}$ has no peak.  Furthermore, the map $\beta$ is an involution on
$R(w)$ for $w$ a signed permutation with $\ell(w)=4$.  Define a family
of involutions $\beta_{i}$ 
acting on reduced words
$a_{1}a_{2}\dotsb a_{p}$ 
by replacing
$a_{i}a_{i+1}a_{i+2}a_{i+3}$ by $\beta(a_{i}a_{i+1}a_{i+2}a_{i+3})$,
 provided $0 < i \leq p-3$.

\begin{thm}\label{t:kras}\cite{kraskiewicz1989reduced} Let $a$ and $b$
  be reduced words of signed permutations.  Then $P'(a) = P'(b)$ if
  and only if there exist Coxeter-Knuth moves of type B relating $a$
  to $b$.  Furthermore, for each standard shifted tableau $Q$ of the
  same shape as $P'(a)$, there exists a reduced word $c$ for the same
  signed permutation such that $P'(c)=P'(a)$ and $Q'(c)=Q$.
\end{thm}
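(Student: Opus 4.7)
The plan is to prove both halves of Theorem~\ref{t:kras} in tandem by combining a local case analysis with an inverse insertion algorithm.

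First, I would establish that each elementary Coxeter-Knuth move $\beta_i$ preserves the insertion tableau $P'$. The key observation is that Kra\'skiewicz insertion proceeds one letter at a time from left to right, so if $T \leftarrow a_i a_{i+1} a_{i+2} a_{i+3}$ agrees with $T \leftarrow \beta(a_i a_{i+1} a_{i+2} a_{i+3})$ for every unimodal tableau $T$, then inserting the remaining letters $a_{i+4} \cdots a_p$ into either side yields identical outputs, and the equality $P'(a) = P'(\beta_i(a))$ follows. This reduces the forward direction to a finite local analysis following the cases of Definition~\ref{d:ck.moves}. The long braid rule $0101 \leftrightarrow 1010$ is checked by unrolling the zero-handling clauses of the row-insertion subroutine, where the phantom bumped entry $k^-=1$ produced when $k = r_j = 0$ is the critical ingredient. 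The short-braid-with-external-witness rule is verified by tracking how the witness letter $a < b$ either commutes past the $b,b+1$ braid within a single row or traps it in a higher row. The peak-moving commutation reduces to observing that commuting adjacent entries in the input commute with every row-insertion step, since a non-bumping pass is unaffected by a letter far from the current bump position. The cases where the peak lies in position $3$ follow from the position-$2$ cases by reversing the four-letter input and using the left-right symmetry of row insertion.

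Next, I would construct an inverse to Kra\'skiewicz insertion in order to obtain the remaining claims. Given a pair $(P', Q')$ with $Q'$ standard shifted and $P'$ unimodal of the same shape, let $(i,j)$ be the cell containing the maximum entry $p$ of $Q'$. Reverse-bump that cell upward through the rows above it, inverting each step of the row-insertion subroutine (in particular, inverting both the Edelman-Greene sub-bumps and the negated-insertion phase) to output a letter $c_p$; iterate on the resulting smaller pair to produce a word $c = c_1 c_2 \cdots c_p$. I would verify two facts: reverse insertion is well-defined on every legal pair because each sub-step is individually invertible, and the resulting $c$ is a reduced word, proved by induction on $p$ using the invariant that every intermediate partial tableau has a reduced row-reading word for the signed permutation currently represented.

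With the inverse in hand, part $2$ is immediate: for any standard shifted $Q$ of the shape of $P'(a)$, applying reverse insertion to $(P'(a), Q)$ outputs a reduced word $c$ with $P'(c) = P'(a)$ and $Q'(c) = Q$. The converse of part $1$ then follows by a counting argument. On any Coxeter-Knuth class $[a]_B$, the map $b \mapsto Q'(b)$ is injective since $P'$ is constant on the class and the pair $(P',Q')$ recovers $b$ by reverse insertion; by part $2$, its image consists of all standard shifted tableaux of that shape; hence $|[a]_B|$ already exhausts all reduced words $b$ with $P'(b) = P'(a)$, forcing any such $b$ to lie in $[a]_B$. I expect the main obstacle to be the forward-direction case analysis, especially the interaction of the long-braid rule with the zero-handling clauses, where the phantom bump must be followed carefully across multiple rows to confirm that both sides of the move yield identical unimodal tableaux.
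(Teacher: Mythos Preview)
The paper does not prove Theorem~\ref{t:kras}; it is quoted from Kra\'skiewicz's original paper as a background result, so there is no ``paper's proof'' to compare against.  Your outline is broadly the standard one and the forward direction and the construction of the inverse are fine in spirit, but the counting argument you give for the converse of part~(1) has a genuine gap.

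You argue: $Q'$ is injective on the Coxeter--Knuth class $[a]_B$; ``by part~2, its image consists of all standard shifted tableaux of that shape''; hence $|[a]_B|$ equals the number of $b$ with $P'(b)=P'(a)$.  The middle step does not follow.  Part~(2), proved via reverse insertion, says only that for each $Q$ there is \emph{some} reduced word $c$ with $P'(c)=P'(a)$ and $Q'(c)=Q$.  It does not place $c$ inside $[a]_B$; a~priori $c$ could lie in a different Coxeter--Knuth class that happens to share the same $P'$-tableau.  So you have shown $|[a]_B|\le |SST(\lambda)| = |\{b:P'(b)=P'(a)\}|$, but not the reverse inequality, and the argument is circular: surjectivity of $Q'$ on $[a]_B$ is equivalent to the converse you are trying to prove.

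The usual repair is to prove directly that $a$ is Coxeter--Knuth equivalent to the row reading word of $P'(a)$.  One does this by induction on the length, showing that each insertion step $T\leftarrow k$ can be simulated by a sequence of elementary Coxeter--Knuth moves carrying the reading word of $T$ followed by $k$ to the reading word of $T\leftarrow k$.  Then $P'(a)=P'(b)$ implies both $a$ and $b$ are equivalent to the same reading word, hence to each other.  An alternative route, available once you have the forward direction, is to prove (independently of the converse) that $Q'(\beta_i(a))=h_i(Q'(a))$ and then invoke Haiman's transitivity of shifted dual equivalence on $SST(\lambda)$ to get surjectivity of $Q'$ on $[a]_B$; this is essentially what the present paper does in Lemma~\ref{prop: beta on Q}, and its proof uses only the forward direction of Theorem~\ref{t:kras}, so it would not be circular.

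A smaller point: your appeal to ``left--right symmetry of row insertion'' to reduce the peak-in-position-$3$ cases to the position-$2$ cases is not obviously valid for Kra\'skiewicz insertion, which lacks the clean reversal symmetry of RSK.  You will likely need to handle those cases by direct analysis as well.
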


Using the Coxeter-Knuth moves of type B, we can define an
analogous graph $CK_B(w)$ on the reduced words for $w\in B_n$ with
edges defined by the involutions $\beta_{i}$.  Each connected
component of $CK_{B}(w)$ has vertex set given by a Coxeter-Knuth
equivalence class $\{a \in R(w) \given P'(a)=P' \}$, and assuming this
set is nonempty, $Q'$ gives a bijection between this set and the
standard shifted tableaux of the same shape as $P'$.  In
Section~\ref{s:Kraskiewicz}, we will show that every connected
component of $CK_{B}(w)$ is isomorphic to some $CK_B(v)$ where $v$ is
increasing.  In Section~\ref{s:axioms}, we will show that $Q'$ gives
an isomorphism of signed colored graphs with a graph on standard
shifted tableaux of the same shape with edges given by shifted dual
equivalence.

\subsection{Stanley symmetric functions revisited}\label{sub:transition}

For signed permutations, there are two forms of Stanley symmetric
functions and their related Schubert polynomials, see
\cite{BH,FK2,lam1995b}.  The distinct forms correspond to the root
systems of type B and C, which both have signed permutations as
their Weyl group.  The definition we will give is the type C
version, from which the type B version can be readily obtained.
First, we introduce an auxiliary family of quasisymmetric functions.

In type A, ascent sets of reduced words can be used to define the
Stanley symmetric functions.  In type B/C, the peak set of a reduced
word plays a similar role.

\begin{defn}\label{def:peak.quasi}\cite[Eq. (3.2)]{BH} Let $X =
  \{x_{1},x_{2},\dots \}$ be an alphabet of variables.  The
  \textit{peak fundamental quasisymmetric function} of degree $d$ on a
  possible peak set $P$ is defined by
\[
\Theta_{P}^{d} (X)= \sum_{(i_{1} \leq \dots \leq
i_{d}) \in A_{d}(P)} 2^{|\{i_{1},i_{2},\dots , i_{d} \}|}x_{i_{1}}
x_{i_{2}}\cdots x_{i_{d}}
\]
and $A_{d}(P)$ is the set of all \textit{admissible sequences} $(1\leq
i_{1} \leq \dots \leq i_{d})$ such that $i_{k-1}=i_{k}=i_{k+1}$ only
occurs if $k \not \in P$.
\end{defn}

The peak fundamental quasisymmetric functions also arise in
Stembridge's enumeration of $P$-partitions
\cite{stembridge1997enriched} and are a basis for the peak subalgebra
of the quasisymmetric functions as studied by
\cite{Billera.Hsiao.vanWilligenburg,schocker} and many others. They
are also related to the Schur $Q$-functions $Q_{\mu}(X)$ which are
specializations of Hall-Littlewood polynomials $Q_{\mu}(X;t)$ with
$t=-1$, see \cite[III]{Macdonald1995}.  By \cite[Prop. 3.2]{BH}, the
following is an equivalent definition of Schur $Q$-functions.

\begin{defn}\label{def:schur.Q} 
For a shifted shape $\mu$, the Schur $Q$-function $Q_{\mu }(X)$ is 
\[
Q_{\mu}(X ) = \sum_{T} \Theta_{peaks(T)}^{|\mu |}(X) 
\]
where the sum is over all standard shifted tableaux $T$ of shape
$\mu$.
\end{defn}

\begin{remark}
In this way, the peak fundamental quasisymmetric functions play the
role of the original fundamental quasisymmetric functions in Gessel's
expansion of Schur functions \cite{gessel1984}.
\end{remark}

Let $g_{w}^{\mu}$ 
be the number of distinct shifted tableaux of shape
$\mu$ that occur as $P'(a)$ for some $a \in R(w)$ under Kra\'skiewicz
insertion.  The numbers $g_{w}^{\mu}$ 
can equivalently be defined as
the number of reduced words in $R(w)$ mapping to any fixed standard
tableaux of shape $\mu$ by Haiman's promotion operator \cite[Prop. 6.1
and Thm. 6.3 ]{haiman1992dual}.  Haiman's promotion operator on $a \in
R(w)$ in type B is equivalent to Kra\'skiewicz's $Q'(a)$.  Recall
from Theorem~\ref{lem:same.peaks} that $Q'(a)$ and $a$ have the same
peak set which implies the equivalence in the following definition.

\begin{defn}\label{def:stan}\cite[Prop. 3.4]{BH}
For $w\in B_{\infty}$ with $d=\ell(w)$, define the type C \textit{Stanley
symmetric function} to be
\begin{align*}
F^C_{w}(X) &= \sum_{\mu} g_{w}^{\mu} 
Q_{\mu}(X)\\
& = \sum_{a \in R(w)} \Theta^{\ell(w)}_{peaks(a)}(X )\\
& = \sum_{a \in R(w)}\hspace{.1in} \sum_{(i_{1} \leq \dots \leq
i_{d}) \in A_{d}(P)} 2^{|\{i_{1},i_{2},\dots , i_{p} \}|}x_{i_{1}}
x_{i_{2}}\cdots x_{i_{d}}.
\end{align*}
\end{defn}

Every Schur $Q$-function is itself a type C Stanley symmetric
function.  In particular, for the shifted partition $\mu
=(\mu_{1}>\mu_{2}>\dots >\mu_{k}>0)$, we can construct an increasing
signed permutation $w(\mu)$ in one-line notation starting with the
negative values $\bar{\mu}_{1},\bar{\mu}_{2},\dots,\bar{\mu}_{k}$ and
ending with the positive integers in the complement of the set
$\{\mu_{1},\mu_{2},\dots,\mu_{k} \}$ in $[\mu_{1}]$.  For example, if $\mu=(5,3,1)$
then $w(\mu)=(\bar{5},\bar{3},\bar{1},2,4)$.  Then by
\cite[Thm.3]{BH},
\begin{equation}\label{eq:increasing}
F^C_{w(\mu )}(X)=Q_{\mu }(X).
\end{equation}
Conversely, every increasing signed permutation $w$ gives rise to an 
$F^C_{w}$ which is a single Schur $Q$-function defined by the negative
numbers in $[w(1),\dots ,w(n)]$.   


\begin{thm}\label{thm:transition.B}\cite[Cor. 9]{billey1998transition} Let $w$ be a signed permutation
which is not increasing.  Then we have the following transition
equation
\begin{equation}\label{eq:transition.B}
F^C_{w}(X) = 
\sum_{w' \in T(w)} F^C_{w'}(X).  
\end{equation}
This expansion terminates in a finite number of steps as a sum with all terms indexed by
increasing signed permutations.   
\end{thm}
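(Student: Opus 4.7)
The plan is to derive this identity as the stable limit of a type C Schubert polynomial identity, following the pattern by which Lascoux and Sch\"utzenberger obtained the type A transition equation from Monk's formula. Recall that the type C Stanley symmetric function $F^C_w$ arises as the stable limit of shifted type C Schubert polynomials $\mathfrak{C}_{1^m \times w}$ as $m \to \infty$, so it suffices to establish a suitable identity at the Schubert level and pass to the limit; stability will absorb any error terms involving lower-length classes or inserted trivial prefixes.

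The key algebraic step is to apply Chevalley's formula in the type C flag variety to the product $\mathfrak{C}_v \cdot \mathfrak{C}_{s_{r-1}}$, where $v = w t_{rs}$ with $(r,s)$ the lex-largest descent pair of $w$, so that $\ell(v) = \ell(w) - 1$. Chevalley's formula expands this product as a sum of classes $\mathfrak{C}_{vt}$ over type C root reflections $t$ satisfying $\ell(vt) = \ell(v) + 1$ and an appropriate pairing condition with $s_{r-1}$. In one-line notation, these reflections are precisely the signed transpositions $t_{ir}$ with $i < r$, $i \ne 0$ (including negative $i$, which introduces the sign-change terms characteristic of type C), plus the distinguished transposition $t_{rs}$ that recovers $\mathfrak{C}_w$ itself. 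Isolating this distinguished summand and rearranging yields
\begin{equation*}
\mathfrak{C}_w \;=\; \mathfrak{C}_v \cdot \mathfrak{C}_{s_{r-1}} \;-\; \sum_{w' \in T(w)} \mathfrak{C}_{w'} \;+\; (\text{correction terms from lower length or short reflections}).
\end{equation*}
Passing to $m \to \infty$ in the shifted version, the product $\mathfrak{C}_v \cdot \mathfrak{C}_{s_{r-1}}$ becomes divisible by a variable that stabilizes to zero, and the correction terms vanish for analogous reasons, leaving exactly the claimed identity $F^C_w = \sum_{w' \in T(w)} F^C_{w'}$.

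For termination, I would track the lex-largest descent pair $(r,s)$ of $w$ and show that every $w' \in T(w)$ has its own lex-largest descent pair strictly later in lex order, so this statistic strictly increases with each iteration of the recurrence. Because there are only finitely many signed permutations of any given Coxeter length, the recursion halts, and the permutations with no admissible descent pair are exactly the increasing ones as defined in Section~\ref{s:intro}. The main obstacle is correctly identifying and applying the type C Chevalley/Monk formula, particularly the interaction of the sign-change generator $s_0$ with reflections $t_{ir}$ for $i < 0$; this is what forces the sum to range over all $i \ne 0$ in $T(w)$ rather than only positive $i$ as in type A. Matching the combinatorial length condition $\ell(w') = \ell(w)$ with the geometric compatibility condition governing which terms survive in Chevalley's expansion is the delicate bookkeeping step.
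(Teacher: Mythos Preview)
The paper does not prove this theorem; it is quoted from \cite{billey1998transition} as background. Your high-level strategy---derive a Schubert-polynomial transition from a Chevalley/Monk identity and pass to the stable limit---is indeed how that reference proceeds. However, two steps in your sketch do not work as written.

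First, the Chevalley computation is set up incorrectly. Multiplying $\mathfrak{C}_v$ by the single class $\mathfrak{C}_{s_{r-1}}$ does not produce exactly the reflections $t_{ir}$ for $i<r$ together with $t_{rs}$; the Chevalley sum for a fundamental weight picks up all roots with nonzero pairing, which in type~C includes many $t_{ab}$ with neither index equal to $r$. In type~A one obtains the transition equation by taking the \emph{difference} $\mathfrak{S}_{s_r}\mathfrak{S}_v-\mathfrak{S}_{s_{r-1}}\mathfrak{S}_v=x_r\mathfrak{S}_v$, which isolates reflections through position $r$ and yields $\mathfrak{S}_w = x_r\mathfrak{S}_v + \sum_{w'\in T(w)}\mathfrak{S}_{w'}$ with a \emph{plus} sign; the analogous difference is needed in type~C. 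Your displayed identity has $-\sum_{w'\in T(w)}\mathfrak{C}_{w'}$, and since you then assert that both the product term and the ``corrections'' vanish in the limit, your argument would actually conclude $F^C_w=-\sum_{w'}F^C_{w'}$.

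Second, the termination argument is backwards and incomplete. For $w'\in T(w)$ one checks (at least when $i>0$) that the last descent of $w'$ occurs at a position strictly less than $r$, so the lex-largest descent pair \emph{decreases}, not increases; an increasing statistic on $B_\infty$ would not terminate anyway. More seriously, the case $i<0$ (the genuinely type~C phenomenon you flag) is exactly where monotonicity of the naive descent-pair statistic is delicate, and you have not addressed it. In \cite{billey1998transition} termination is handled by a separate argument, not by this statistic alone.
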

Note that the index set $T(w)$ is defined in the remarks before Theorem~\ref{t:little.bijection}.

\begin{cor}\label{cor:reduced}
Let $w$ be a signed permutation that is not increasing.  Then
\[
|R(w) | =
\sum_{w' \in T(w)} |R(w')|.  
\]
\end{cor}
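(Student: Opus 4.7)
The plan is to deduce this identity from the transition equation Theorem~\ref{thm:transition.B} by extracting the coefficient of a carefully chosen monomial, in direct analogy with the argument given in the introduction for type $A$. The only new wrinkle is the factor of $2$ that appears in the peak fundamental quasisymmetric functions of Definition~\ref{def:peak.quasi}, which will cancel uniformly.

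Setting $d=\ell(w)$, I would examine the coefficient of $x_1 x_2 \cdots x_d$ on both sides of~\eqref{eq:transition.B}. Using
\[
F^C_w(X) \;=\; \sum_{a \in R(w)} \Theta^{d}_{peaks(a)}(X)
\]
from Definition~\ref{def:stan}, the main step is to compute $[x_1 x_2 \cdots x_d]\,\Theta^{d}_P(X)$ for an arbitrary peak set $P$. By Definition~\ref{def:peak.quasi}, this coefficient is a sum over admissible weakly increasing sequences $1 \leq i_1 \leq \cdots \leq i_d$ whose underlying multiset is $\{1, 2, \ldots, d\}$, weighted by $2^{|\{i_1, \ldots, i_d\}|}$. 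The only such sequence is the strictly increasing one $(1, 2, \ldots, d)$, and its admissibility condition (that $i_{k-1}=i_k=i_{k+1}$ only when $k \notin P$) is vacuously satisfied since no three consecutive entries coincide. Hence $[x_1 x_2 \cdots x_d]\,\Theta^{d}_P(X) = 2^d$, independent of $P$. It follows that $[x_1 \cdots x_d]\,F^C_w(X) = 2^d \,|R(w)|$.

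Applying the same computation to every term on the right-hand side of~\eqref{eq:transition.B}, and noting that by definition of $T(w)$ each $w' \in T(w)$ satisfies $\ell(w') = \ell(w) = d$, the right-hand side has coefficient $2^d \sum_{w' \in T(w)} |R(w')|$. Cancelling the common factor $2^d$ yields the desired identity. I do not anticipate any real obstacle here: the argument is a direct coefficient extraction, and the only subtlety is verifying that the monomial $x_1 \cdots x_d$ receives a uniform coefficient $2^d$ across all peak sets, which is immediate once one observes that a weakly increasing length-$d$ sequence with multiset $\{1, \ldots, d\}$ must be strictly increasing.
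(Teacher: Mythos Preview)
Your proof is correct and follows exactly the approach of the paper, which simply instructs the reader to compare the coefficient of $x_1 x_2 \cdots x_{\ell(w)}$ on both sides of~\eqref{eq:transition.B}. You have merely spelled out the routine verification that this coefficient equals $2^d |R(\cdot)|$ in each Stanley symmetric function, which the paper leaves implicit.
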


\begin{proof}
Consider the coefficient of $x_{1}x_{2}\cdots x_{\ell(w)}$ in
$F^C_{w}(x)$ and the right hand side of \eqref{eq:transition.B}.  
\end{proof}

\section{Pushes, Bumps, and the signed Little Bijection}\label{s:signed.little}

In this section, we define the signed Little map on reduced words via
two other algorithms called \emph{push} and \emph{bump}.  A key tool is the
wiring diagrams for reduced words of signed permutations.  The main
theorem proved in this section is Theorem~\ref{t:little.bijection},
which says that the Little bumps determine 
a bijection on reduced
words 
that realizes the transition equation for type C Stanley
symmetric functions.

\begin{figure}
    \caption{A wiring diagram of $a=0120312 \in R(34\bar 2 \bar 1)$. 
\label{fig:wiring diagram}}
\begin{center}
\includegraphics[scale=0.8]{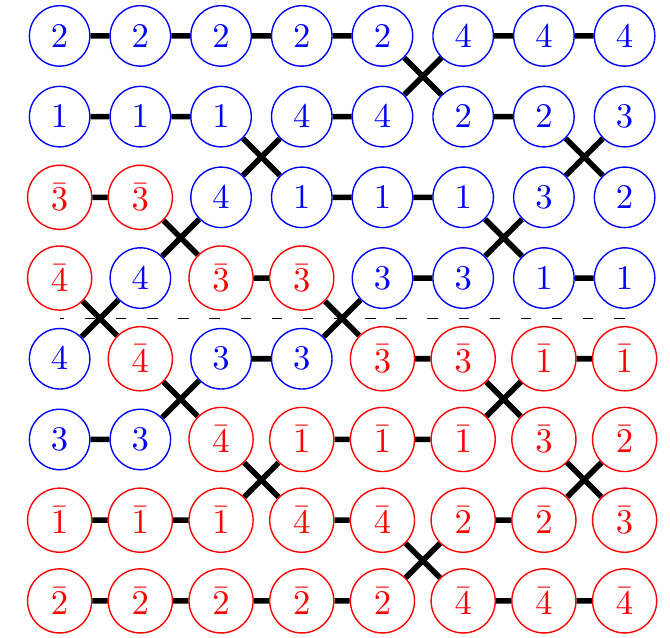}
\end{center}
\end{figure}

The \emph{wiring diagram} of $a=a_1a_2\cdots a_p$ is the array $[p]
\times [-n,n]$ in Cartesian coordinates.  Each ordered pair in the
array indexes a square cell, which may or may not contain a cross,
denoted $\times$: specifically, the crosses are located at $(j,a_j)$,
and $(j,-a_j)$ for all $1\leq j\leq p$ (thus if
$a_j=0$, there will be just one cross in column $j$).  The boundaries
between cells $(u,v)$ and $(u, v+1)$, as well as the top and bottom
edges of the diagram, contain a horizontal line denoted
$\underline{\hspace{.1in}}$ unless there is a cross in the cell
immediately above or below.  The line segments connect with the
crosses to form ``wires'' with labels $[-n] = \{-1, -2, \dots, -n\}$
and $[n] = \{1, 2, \dots, n\}$ starting on the right hand edge of the
diagram.  So, specifically in the rightmost column, if $a_p>0$ then
wires $a_p$ and $a_{p}+1$ cross in column $p$, and also wires $-a_p$
and $-a_{p}-1$ cross.  If $a_p=0$, then wires $1$ and $-1$ cross in
column $p$.  For each $0\leq k<p$, define $w^k = s_{a_p} \dots
s_{a_{k+1}}$, so that $w^0=w^{-1}$ and define $w^p$ to be the identity. The
sequence of wire labels reading up along the left edge from bottom to
top gives the long form of the signed permutation $w^{-1}$.  More
generally, the sequence of labels on the wires of the wiring diagram
just to the right of column $k$ is the signed permutation $w^k$.
Every wiring diagram should be considered as a subdiagram of the
diagram with wires labeled by all of $\mathbb{Z} \setminus \{0 \}$
where all constant trajectories above and below the diagram are suppressed in keeping with the
notion that every signed permutation in $B_n$ can be thought of as an
element of $B_\infty$.  See Figure~\ref{fig:wiring diagram} for an
illustration of these definitions.

The \emph{inversion set} $\Inv_B(w)$ of a signed permutation $w$ is
\[
\Inv_B(w) = \{ (i,j) \in ([-n] \cup [n]) \times [n] : \ |i| \leq j \ \mbox{and} \ w(i) > w(j)\}.
\]
We have defined the wiring diagrams so that the 
inversion $(i,j)$ corresponds with the crossing of wires $i$ and
$j$ in any wiring diagram of a reduced word for $w$.  Note, the wires
$-j$ and $-i$ also cross in the same column in such a diagram.  Thus,
it is equivalent to refer to the inversion $(i,j)$ by
$(-j,-i)$.  If $a \in R(w)$, then the wiring diagram for $a$ is
\textit{reduced} and every crossing corresponds 
to an inversion for
$w$.

For a word $a = a_1 \dots a_p$ and $\delta \in \{-1,1\}$, we define a
\emph{push $P^\delta_i$} at index $i$ to be the map that adds $\delta$
to $a_i$ while fixing the rest of the word provided $a_i$.  If
$a_{i}=0$, then regardless of $\delta$, the $i$th entry is set to $1$
in the resulting word e.g. $P^{-}_{1}(0)=P^{+}_{1}(0)=1$. We will write $P^{-}_{k}$ and $P^{+}_{k}$ for
$\delta = -1$ and $\delta = 1$ respectively.  The effect pushes have
on wiring diagrams can be observed in Figure~\ref{fig:fullbump}.

If $a = a_1 \dots a_p$ is a word that is not reduced, we say a
\emph{defect} is caused by $a_i$ and $a_j$ with $i \neq j$ if the
removal of either leaves a reduced word.   The following lemma can be deduced for signed permutations from the wiring diagrams, but it holds more generally for  Coxeter groups.

\begin{lem}\cite[Lemma 21]{lam2006little} \label{lem:lam} For $W$ a Coxeter group and $w\in W$, let $a =
a_1 \dots \hat{a_i} \dots a_p \in R(w)$ such that $a_1 \dots a_p$ is
not reduced.  Then there exists a unique $j \neq i$ such that $a_1
\dots \hat{a_j} \dots a_p$ is reduced.  Moreover, $a_1 \dots \hat{a_j}
\dots a_p\in R(w)$.

\end{lem}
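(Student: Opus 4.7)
The plan is to deduce the lemma from the Strong Exchange Property for Coxeter groups. Set $u = s_{a_1} \cdots s_{a_{i-1}}$ and $v = s_{a_{i+1}} \cdots s_{a_p}$, so $w = uv$ is reduced with $\ell(w) = p-1$. The full product $w' := s_{a_1} \cdots s_{a_p}$ equals $u s_{a_i} v = wt$, where $t := v^{-1} s_{a_i} v$ is a reflection. The non-reducedness of $a_1 \cdots a_p$ gives $\ell(w') < p$; since right-multiplication by a reflection shifts length by an odd integer, $\ell(w')$ has parity opposite to $\ell(w) = p-1$. Consequently $\ell(w') \leq p-2$, and in particular $\ell(wt) < \ell(w)$.

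Next I will invoke the Strong Exchange Property for the reduced expression $\tilde a = a_1 \cdots \hat{a}_i \cdots a_p$ of $w$ with the reflection $t$: there is a unique index $\tilde j \in \{1, \ldots, p-1\}$ such that deleting the $\tilde j$-th letter of $\tilde a$ yields a reduced expression for $wt = w'$. Re-indexing back to $a$ (set $j = \tilde j$ if $\tilde j < i$, and $j = \tilde j + 1$ if $\tilde j \geq i$) produces a unique $j \neq i$ with the property that the word $\bar a$ obtained from $a$ by deleting \emph{both} positions $i$ and $j$ is a reduced word for $w'$ of length $p-2$.

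To upgrade this to the conclusion of the lemma, I will show that the single deletion $a_1 \cdots \hat{a}_j \cdots a_p$ is itself a reduced word for $w$. In the case $j > i$, write $v' = s_{a_{i+1}} \cdots \hat{s}_{a_j} \cdots s_{a_p}$; then $\bar a$ has product $u v' = w'$, while also $w' = u s_{a_i} v$, and cancelling $u$ forces $v' = s_{a_i} v$. Therefore $a_1 \cdots \hat{a}_j \cdots a_p$ has product $u s_{a_i} v' = u s_{a_i}^2 v = uv = w$, and its length $p-1 = \ell(w)$ makes it reduced. The case $j < i$ is symmetric, yielding the analogous identity $u' = u s_{a_i}$ for the prefix.

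For uniqueness of $j$ as a position whose deletion from $a$ produces any reduced word (not merely one equal to $w$), I plan to use the standard reflection-sequence fact that in any word the reflections of odd multiplicity in the root sequence are precisely the right inversions of the product. Combined with $\ell(w') = p-2$, this forces $t$ to be the unique reflection occurring more than once in $a$'s reflection sequence, with multiplicity exactly two at positions $i$ and $j$; every other position contributes a reflection that is an inversion of $w'$ and therefore yields a word of length $p-3$ upon deletion. The main technical obstacle will be careful index bookkeeping across the three closely related words $a$, $\tilde a$, and $\bar a$, and in particular pinning down the cancellation $v' = s_{a_i} v$ from the comparison of two reduced expressions for $w'$.
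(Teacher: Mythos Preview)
The paper does not actually prove this lemma; it is quoted from \cite{lam2006little} with only the remark that for signed permutations it ``can be deduced from the wiring diagrams.'' So there is no paper proof to compare against, and your attempt must stand on its own.

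Your existence argument is essentially correct. Strong Exchange applied to the reduced word $\tilde a$ and the reflection $t$ with $\ell(wt)<\ell(w)$ produces a unique position whose deletion yields an expression for $wt=w'$, and your cancellation $v'=s_{a_i}v$ then shows that $a_1\cdots\hat a_j\cdots a_p$ has product $w$ and length $p-1=\ell(w)$, hence is reduced. One caveat: you assert that the doubly-deleted word $\bar a$ is a \emph{reduced} word for $w'$, but Strong Exchange only guarantees that its product equals $w'$; reducedness would require $\ell(w')=p-2$, which you have not established (and which is false in general). Fortunately your subsequent computation uses only the product, not reducedness, so this slip is harmless for existence.

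The uniqueness argument, however, has a genuine gap. You write ``Combined with $\ell(w')=p-2$, this forces $t$ to be the unique reflection occurring more than once in $a$'s reflection sequence, with multiplicity exactly two.'' But $\ell(w')=p-2$ need not hold. Take $W=S_3$ and $a=1221$. Deleting position $2$ gives $121\in R(s_1s_2s_1)$, so the hypothesis is satisfied with $i=2$, $p=4$, $\ell(w)=3$. Yet $w'=s_1s_2s_2s_1=e$ has $\ell(w')=0=p-4$. The reflection sequence of $a$ is $t_1=t_4=s_1$ and $t_2=t_3=s_1s_2s_1$: two distinct reflections each repeated, not one. Your proposed mechanism therefore collapses. (The lemma is still true here: the unique $j\neq 2$ with reduced deletion is $j=3$, and $a^{(3)}=121\in R(w)$.) To repair uniqueness you need an argument that does not assume $\ell(w')=p-2$; one route is to show directly that if $a^{(k)}$ is reduced for some $k\neq i$ then $t_k=t_i$, after which the uniqueness in Strong Exchange on $\tilde a$ forces $k=j$.
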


\begin{defn}[\textbf{Little Bump Algorithm}]
Let $a = a_1 \dots a_p$ be a reduced word of the signed permutation
$w$ and $(i,j) \in \Inv_B(w)$ such that $\ell(w t_{ij}) = \ell(w)-1$.  
Fix $\delta \in \{-1,1 \}$.  We define the \emph{Little bump for $w$}
at the inversion $(i,j)$ in the direction $\delta$, denoted
$B^{\delta}_{(i,j)}$, as follows. 
\begin{enumerate}

\item[Step 1:] Identify the column $k$ and row $r$ containing the wire
  crossing $(i,j)$ with $i<j$.  If $a_k=0$, set $b:= P^{1}_k(a)$ and
  $\delta:=-\delta$.  If $a_k>0$, then either $w^{k}(a_k) = i$ or
  $w^{k}(a_k) = -j$. If $w^{k}(a_k) = i$, set $b:= P^\delta_k(a)$.
  Otherwise $w^{k}(a_k) = -j$, and we set $\delta := -\delta$ and
  $b := P^{\delta}_k(a)$.  Next, set $r:=r+\delta$. Note that
  the order in which the variables are updated matters.  Let $(x<y)$
  be the new wires crossing in column $k$ and row $r$.

\item[Step 2:] If $b$ is reduced, return $b$. Otherwise, by
  Lemma~\ref{lem:lam} there is a unique defect caused by $b_k$ and
  some $b_l$ with $l \neq k$. If $b_l>0$, then either $w^{l}(b_l+1)\in
  \{x,y\}$ or $w^{l}(b_l+1)\in \{-x,-y\}$.  If $b_l=0$, then $x=-y$
  and $w^{l}(1)\in \{-x,x\}$.

\begin{itemize}
\item If $b_l>0$ and $w^{l}(b_l+1)\in \{x,y\}$, set $r:=r+\delta$,
  $k:=l$, and $b := P^\delta_k(b)$.  After updating the variables, let
  $(x<y)$ be the wires crossing in the diagram for $b$ in column
  $k$ and row $r$.  Repeat Step 2.

\item Otherwise, $b_{l}=0$ or $w^{l}(b_l+1)\in \{-x,-y\}$.  Set $\delta :=
  -\delta$, $r:=r+\delta$, $k := l$ and $b := P^{\delta}_k(b)$.  Again,
  the order matters.  After updating the variables, let $(x<y)$ be
  the wires crossing in column $k$ and row $r$.  Repeat Step 2.
\end{itemize}
\end{enumerate}
\end{defn}

Figure~\ref{fig:fullbump} shows each step of a Little bump in terms of
wiring diagrams.  The corresponding effect on reduced words can be
read off the diagrams by noting the row numbers of the wire crossings
in the upper half plane including the $x$-axis.   

\begin{remark}\label{rem:bump.1}
  The Little bump algorithm is best thought of as acting on wiring diagrams.
  At every step, 
  the pushes move
  the $(x,y)$-crossings consistently in the 
  initial direction of $\delta$. In the
  first step, we move the $(i,j)$-crossing in the wiring diagram up if
  $\delta =+1$ and down if $\delta = -1$.  If wires $i$ and $j$ cross
  in the upper half plane then the swap $a_k$ is replaced with
  $a_k+\delta$.  However, if wires $i,j$ cross in the lower half plane
  then the swap $a_k$ is replaced with $a_k-\delta$ and the sign of
  $\delta$ is switched.  If a new defect crossing is later found on
  the other side of the $x$-axis from the last crossing, then the sign
  of $\delta$ will switch again so that the crossing continues to move in the same
  direction.  Thus, if the initial push moved $(i,j)$ down, each
  subsequent iteration will continue to move a crossing down, but the
  effect on the word from the corresponding push can vary.
\end{remark}

\begin{remark}\label{rem:bump.2}
  Observe that in each iteration of Step 2, the word $b$ has the
  property that its subword $b_1 b_2 \cdots \widehat{b_k} \cdots b_p$
  is reduced.    
\end{remark}

When analyzing Little bumps and pushes, we will need to track where
the next defect can occur.  Given the wiring diagram for a word $b=b_1
\cdots b_p$, not necessarily reduced, and a crossing $(x,y)$ in column
$k$ in the diagram, define the \textit{(lower) boundary} of $b$ for
the crossing $(x,y)$, denoted $\partial^k_{(x,y)}(b)$, to be the union
of the trajectory of $y$ from columns 0 to $k$ and the trajectory of
$x$ from columns $k$ to $p$.  Note using the notation of Step 2 above,
if a defect is caused by $b_{k}$ in this iteration it will occur along
$\partial^k_{(x,y)}(b)$.  In Figure~\ref{fig:fullbump}, the boundary
of each crossing that will be pushed is dashed.  A similar concept of
an upper boundary could be defined if the initial step pushes the
$(i,j)$ cross up.
that

\begin{lem}
  \label{lem:terminate} Let $a\in R(w)$ and $B^\delta_{(i,j)}$ be a
  Little bump for $w$ consisting of the sequence of pushes
  $P^{\delta_1}_{t_1}, P^{\delta_2}_{t_2}, \dots$ acting on $a$.  Then,
  for all $k$ and $\delta \in \{-1,1\}$, the push $P^{\delta}_k$
  appears at most once in this sequence.  Hence, the Little bump
  algorithm terminates in at most $2 \ell(w)$ pushes.
\end{lem}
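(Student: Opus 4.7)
The plan is to establish termination via a monovariant in the wiring diagram based on the observation that all pushes move crossings in the same geometric direction.

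Following Remark~\ref{rem:bump.1}, I would first formalize that all pushes in a single bump move their respective wire crossings in the same absolute direction $\epsilon \in \{+1, -1\}$, namely the initial direction of $\delta$. The toggling of $\delta$ within the algorithm exactly compensates for the orientation reversal that occurs when the bump transitions between the upper and lower half-planes of the wiring diagram; hence, regardless of half-plane, each successive push moves its pushed crossing by exactly one row in direction $\epsilon$.

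Next, I would track the bump as a path $c_1, c_2, \ldots$ of pushed crossings, where by Step~2 each $c_{t+1}$ is the unique defect (cf.~Lemma~\ref{lem:lam}) lying on the boundary $\partial^{k_t}_{(x_t,y_t)}(b)$ of the previous pushed crossing $c_t$. Thus, $c_{t+1}$ shares a wire with $c_t$ (modulo identification of a wire with its negation at the $x$-axis), so the bump path propagates along a wire until it strikes a new crossing, at which point it switches to an adjacent wire. Combined with the uniform $\epsilon$-motion, this exhibits the bump as a planar trajectory that advances monotonically in the wiring diagram.

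The main claim that $P^\delta_k$ appears at most once then reduces to showing that the bump cannot return to the same pair $(k,\delta)$. If $P^\delta_k$ occurred at iterations $t_1<t_2$, then $c_{t_1}$ and $c_{t_2}$ would sit in the same column $k$, lie in the same half-plane (determined by the correspondence between $\delta$ and $\epsilon$), and be displaced in the same direction. By the monotone $\epsilon$-motion of crossings in a fixed column and half-plane, together with the planarity of the wiring diagram, such a cyclic return is incompatible with the wire-propagation structure of the bump path. I expect the main obstacle to be the careful bookkeeping at $x$-axis crossings: each such crossing flips $\delta$ and relabels the tracked wire by its negation, so the correspondence between the algorithmic parameter $\delta$ and the geometric direction $\epsilon$ must be synchronized case-by-case on whether $c_t$ lies in the upper or lower half-plane. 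Once the non-repetition of $(k,\delta)$ is established, the bound $2\ell(w)$ is immediate since there are $\ell(w)=p$ columns and two possible signs of $\delta$ per column.
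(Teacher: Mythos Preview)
Your outline follows the same strategy as the paper: interpret the bump as a trajectory in the wiring diagram that moves monotonically in one vertical direction~$\epsilon$, and conclude that no pair $(k,\delta)$ can recur. The ingredients you name (uniform $\epsilon$-motion from Remark~\ref{rem:bump.1}, defects lying on the boundary $\partial^{k}_{(x,y)}(b)$, the $(k,\delta)\leftrightarrow\text{half-plane}$ correspondence) are exactly the ones the paper uses.

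The gap is at the sentence ``such a cyclic return is incompatible with the wire-propagation structure of the bump path.'' This is the whole content of the lemma, and you have asserted it rather than argued it. Knowing that each individual push moves its crossing one step in direction~$\epsilon$ does not by itself rule out the bump wandering off to the right, coming back, and pushing the column-$k$ crossing a second time in the same direction; nothing in ``planarity'' alone forbids this. What the paper supplies here is a genuine monovariant: the entire boundary \emph{curve} $\partial^{k}_{(x,y)}(b)$ (not just the discrete sequence of pushed crossings $c_t$) moves weakly in direction~$\epsilon$ at every step. The key local observation is a slope argument: after a push lands a crossing at $(k,r)$, the boundary leaves $(k,r)$ with negative slope to the right, so the next time a defect is found weakly left of column~$k$ it must in fact lie \emph{strictly} left of~$k$, forcing the boundary strictly past $(k,r)$. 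Once the boundary has passed both crossings in column~$k$, neither can be pushed again. Your write-up needs this curve-level monotonicity (or an equivalent potential function) to turn the sketch into a proof; the $x$-axis bookkeeping you flag as the main obstacle is actually routine once this is in place.
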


This proof is a slight extension of the proof of Lemma 5
in~\cite{little2003combinatorial}.
\begin{proof}
  Let $a= a_1 a_2 \dots a_p \in R(w)$ and $a_k$ denote the swap
  introducing the inversion $(i,j)$, with $i<j$.  Since $B^-_{(i,j)} =
  B^+_{(-j,-i)}$, we need only demonstrate the result when the
  algorithm starts with a push that moves the $(i,j)$-crossing down.

  In Step 1 of the Little bump algorithm, either $b=P^\delta_k(a)$ is
  reduced or there is some $l \neq k$ such that $b_k $ and $b_l$ cause
  a defect.  Suppose the latter case holds.  Then $b_{k}$ and $b_{l}$
  swap the wire $i$ with some wire $h \neq i,j$.  By considering the
  reverse of the word if necessary, we may assume $k<l$.  The defect
  in column $l$ must occur on the boundary of $\partial^k_{(x,y)}(b)$.
  Observe that $\partial^k_{(i,j)}(a)$ and $\partial^l_{(i,h)}(b)$
  coincide from 1 to $k$ and from $l$ to $p$.  Moreover, between $k$
  and $l$, the boundary $\partial^l_{(i,h)}(b)$ is strictly lower in
  the wiring diagram than $\partial^k_{(i,j)}(a)$.  This can be seen
  in the first two diagrams shown in Figure~\ref{fig:fullbump}.
  Observe that the trajectories of $-i$ and $-j$ will not interact
  with $\partial^l_{(i,h)}(b)$ unless $j=-i$. Therefore the boundary
  $\partial^l_{(i,h)}(b)$ is weakly below the boundary
  $\partial^k_{(i,j)}(a)$.  Similar reasoning shows that on each
  iteration of Step 2 in the algorithm, the boundary always moves
  weakly down provided the initial push moves a crossing down.

\begin{figure}
\caption{The sequence of pushes corresponding to
the Little bump $B^-_{(\overline{2},1)}$ as applied to $a = 1021201 \in
R([1,\overline{3},\overline{2}])$. The boundary of each
crossing about to be moved is dashed. Here, red is negative and blue
is positive.  The thin dashed line
through the center row is row 0 and the row numbers increase going up.  
\label{fig:fullbump} 
}  \centering \vspace{10pt}
\begin{tabular}{cc}
$a=1021201$ & $P^-_3(a)=1011201$ \\

\includegraphics[page = 1, scale = 1.1]{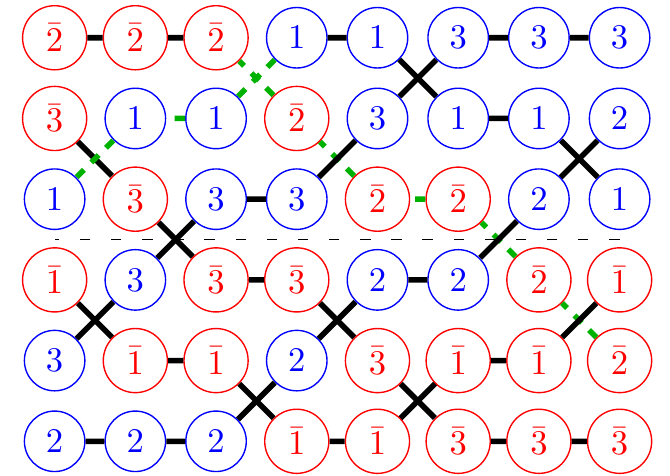}
&
\includegraphics[page = 2, scale = 1.1]{TypeBbump.pdf} \\

\\
$P^-_4P^-_3(a)=1010201$ & $P^+_6P^-_4P^-_3(a)=1010211$ \\

\includegraphics[page = 3, scale = 1.1]{TypeBbump.pdf}
&
\includegraphics[page = 4, scale = 1.1]{TypeBbump.pdf} \\
\\
$P^+_7P^+_6P^-_4P^-_3(a)=1010212$ & $P^-_1P^+_7P^+_6P^-_4P^-_3(a)=0010212$ \\

\includegraphics[page = 5, scale = 1.1]{TypeBbump.pdf}
&
\includegraphics[page = 6, scale = 1.1]{TypeBbump.pdf} \\

\end{tabular}

\end{figure}

\begin{figure}

\centering
\vspace{10pt}

\begin{tabular}{cc}

$P^+_2P^-_1P^+_7P^+_6P^-_4P^-_3(a)$ & $P^+_3P^+_2P^-_1P^+_7P^+_6P^-_4P^-_3(a)$\\

\includegraphics[page = 7, scale = 1.1]{TypeBbump.pdf}
&
\includegraphics[page = 8, scale = 1.1]{TypeBbump.pdf} \\

\end{tabular}
\vspace{10pt}

$B^-_{(\overline{2},1)} (a) =
P^+_5P^+_3P^+_2P^-_1P^+_7P^+_6P^-_4P^-_3(a)=0120312 \in
R([3,4,\overline{2},\overline{1}])$

\includegraphics{typebbumpfinal.pdf}

\end{figure}

Now we can verify that no push occurs twice in the Little bump
algorithm.  In particular, we claim that both $P_k^-$ and $P_k^+$ can
occur, but they are never repeated.  For example, $P^-_5$ and $P^+_5$
both occur in Figure~\ref{fig:fullbump}.
To do this, we need to examine the argument above more closely.  Assume that Step 2 starts
with a push $P_k^\delta$ moving a crossing into position 
$(k,r)$ in 
the wiring diagram of $b$.  Assume this wiring diagram has a defect in
columns $k$ and $l$ and $k<l$.  Then the boundary before and after the
push $P_k^\delta$ agree weakly to the left of column $k$.  If
successive pushes occur strictly to the right of column $k$, then none
of these pushes will repeat $P_k^\delta$.  Furthermore, the boundary
to the left of column $k$ will be constant.  The first time another
iteration of Step 2 finds a defect weakly to the left of column $k$,
we claim it must occur in a column strictly to the left of column $k$,
thus the boundary moves strictly below 
$(k,r)$.  The reason column $k$
cannot be part of the defect this time is that the boundary has
negative slope just to the right of the crossing in row $r$ column
$k$, but to create a defect with a string passing below 
$(k,r)$ the
boundary must have positive slope where the two strings meet to the
right of column $k$.  Furthermore, if another push in column $k$
occurs later in the algorithm, it must be on the other side of the
$x$-axis so it must be $P_k^{-\delta}$ since the boundary moves
monotonically.  Once the boundary has moved beyond both crossings in
column $k$, neither crossing will be pushed again, so $P_k^\delta$
occurs at most once in the Little Bump algorithm.
\end{proof}

\begin{lem}\label{lem:peaks}
Let $w$ be a signed permutation, $a = a_1 \dots a_p \in R(w)$,  and $B^\delta_{(i,j)}$ be a Little bump for $w$.
Then 
\[
peaks(a) = peaks(B^\delta_{(i,j)}(a)).
\]
\end{lem}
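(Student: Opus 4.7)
The key observation is that a single push $P^\epsilon_k$ modifies only the entry at position $k$, so it can alter the peak status only at positions $k-1$, $k$, and $k+1$. Consequently, $peaks(a) \bigtriangleup peaks(B^\delta_{(i,j)}(a))$ is contained in the union of the triples $\{t-1,t,t+1\}$ over columns $t$ touched by the bump's pushes. The plan is to track the peak set through the sequence of pushes and verify that any changes cancel out by the end of the algorithm.

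Write the bump as $a = b^{(0)} \to b^{(1)} \to \cdots \to b^{(N)} = B^\delta_{(i,j)}(a)$, where $b^{(s)} = P^{\delta_s}_{t_s}(b^{(s-1)})$. For $s<N$, Lemma~\ref{lem:lam} forces a unique defect in $b^{(s)}$ between column $t_s$ and some other column $t_{s+1}$, which must be the target of the next push. The boundary analysis in the proof of Lemma~\ref{lem:terminate} then shows that the two defect wires connect along a monotone boundary between columns $t_s$ and $t_{s+1}$, strongly restricting the configuration of $a$ near both push columns. I will use this constraint to show that each defect-resolution pair $P^{\delta_{s+1}}_{t_{s+1}} \circ P^{\delta_s}_{t_s}$ is peak-preserving in aggregate: any peak broken or created by the first push at positions $t_s-1,\,t_s,\,t_s+1$ is compensated by the second push.

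A short case analysis, split by the signs $\delta_s,\delta_{s+1}$, the gap $|t_s - t_{s+1}|$, and whether the pushed crossing lies above or below the $x$-axis (the sign-flipping cases in the Little bump algorithm), completes each defect-pair case. Iterating this along the bump sequence by induction on $s$ then yields $peaks(b^{(N)}) = peaks(b^{(0)})$. The main obstacle is the sub-case $|t_s - t_{s+1}| \leq 2$, where the triples of positions affected by the two consecutive pushes overlap; in that setting the monotone boundary forces a specific two-crossing pattern of the two defect wires in $a$, and a careful but routine check of the few local configurations, together with the fact that consecutive letters in a reduced word must differ, confirms the required cancellation.
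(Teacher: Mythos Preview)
Your plan is headed in the right direction but misses the single observation that makes this lemma almost immediate, and the tools you invoke do not supply it. A push changes one letter by exactly $\pm 1$ (or sends a $0$ to $1$), so it can never flip a strict inequality $b_{k-1}<b_k$ directly to $b_{k-1}>b_k$; the only way it can disturb the ascent/descent pattern adjacent to column $k$ is by producing an \emph{equality} $b_{k-1}=b_k$ or $b_k=b_{k+1}$. But equal adjacent letters are themselves a defect, so by Lemma~\ref{lem:lam} the very next push is forced onto that adjacent column, in the same direction when the common value is positive, restoring the strict inequality. That is the paper's whole argument. The only delicate case---the common value being $0$, where the direction would flip---is dispatched by noting it would force the pattern $\ldots 0\,1\,0\ldots$ around a peak in the pre-push word, whence deleting the middle letter leaves adjacent zeros, contradicting Remark~\ref{rem:bump.2}.

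Two concrete problems with your outline. First, the monotone-boundary analysis from Lemma~\ref{lem:terminate} concerns wire trajectories and termination; it says nothing useful about the local letter values $b_{t_s\pm 1}$ and is not needed here. Second, your ``defect-resolution pairs'' $(P_{t_s},P_{t_{s+1}})$ do not partition the push sequence: they overlap, since $P_{t_{s+1}}$ is simultaneously the resolver for step $s$ and the creator of the next defect. A push $P_{t_{s+1}}$ may restore a peak near $t_s$ while breaking one near $t_{s+1}$, to be repaired only by $P_{t_{s+2}}$. So ``each pair is peak-preserving in aggregate'' is not the right invariant, and your induction on $s$ as stated does not close. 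The correct induction fixes a single position $i$ and tracks the two strict inequalities $b_{i-1}\lessgtr b_i$ and $b_i\lessgtr b_{i+1}$ through the algorithm, using only the observation above.
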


\begin{proof}
  Say $i \in peaks(a)$, so $a_{i-1} < a_i > a_{i+1}$.  The statement
  holds unless a push applied to one of these entries during the
  algorithm leaves an equality in the resulting word $b$, say $b_{i-1}
  = b_i$.  In this case, there is a defect caused by $b_{i-1}$ and
  $b_i$, so we push the other next.  The direction of the new push for
  defects caused by adjacent entries will be the same unless $b_{i-1}
  = b_i=0$.  This cannot occur since $a_i > a_{i-1}\geq 0$ and if $a_i
  =1$, then $a_{i-1} = 0 = a_{i+1}$. Hence, $a_1 \dots \hat{a_i} \dots a_p$
  would not be reduced, which is not possible by
  Remark~\ref{rem:bump.2}.
\end{proof}

\begin{thm}[Restatement of Theorem~\ref{t:Q.preserving}, part 1] \label{thm:bump.image}
  Let $x \in B_\infty$, and let $a \in R(x)$. Say $j<k$.
\begin{enumerate}
\item If $B_{(j,k)}^-$  is a Little bump for $x$, then
  $B_{(j,k)}^-(a) \in R(xt_{jk}t_{ij})$ for some $i<j$.  
\item If $B_{(j,k)}^+$  is a Little bump for $x$, then
  $B_{(j,k)}^+(a) \in R(xt_{jk}t_{kl})$ for some $k<l$.  
\end{enumerate}
\end{thm}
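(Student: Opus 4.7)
The plan is to prove part (1); part (2) then follows from the identity $B^+_{(j,k)} = B^-_{(-k,-j)}$ (noted in the proof of Lemma~\ref{lem:terminate}) together with the signed-transposition identities $t_{-k,-j} = t_{jk}$ and $t_{i,-k} = t_{k,-i}$. By Lemma~\ref{lem:terminate}, $B^-_{(j,k)}$ terminates after finitely many pushes and returns a reduced word $b \in R(y)$ of the same length as $a$, so $\ell(y) = \ell(x)$. The task is then to identify $y$ as $xt_{jk}t_{ij}$ for some $i < j$.

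The first step of the plan is an invariant proved by induction on the number of pushes $t$: if $b^{(t)}$ denotes the current (possibly non-reduced) word after $t$ pushes, with the $t$th push applied at column $k_t$, then deleting the entry in column $k_t$ from $b^{(t)}$ yields a reduced word for $xt_{jk}$. This invariant holds at $t = 1$ because Step~1 modifies the swap realizing the inversion $(j,k)$ by a single unit, and removing the modified letter from the resulting word recovers a reduced word for $xt_{jk}$. For the inductive step, Lemma~\ref{lem:lam} identifies the unique position in $b^{(t)}$ whose deletion yields a reduced word; this position is exactly $k_{t+1}$, and the push applied in Step~2 transports the invariant forward.

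When the algorithm terminates at some step $T$ with $b = b^{(T)}$ reduced, the invariant forces $y$ to differ from $xt_{jk}$ by a single reflection, so $y = xt_{jk}t_{il}$ for some $(i,l) \in \Inv_B(y)$. To pin down this inversion, I would argue by induction that the wire originally labeled $j$ is one of the two wires forming the actively pushed crossing at every stage; the other wire changes from push to push as wire $j$ traces a monotonically descending path through the diagram. Combining this with the monotonic descent of the boundary $\partial$ during $B^-_{(j,k)}$, established in the proof of Lemma~\ref{lem:terminate}, the algorithm must terminate exactly when wire $j$ meets a wire labeled $i$ with $i < j$ that it was not previously crossing, producing the new inversion $(i,j)$ and hence $y = xt_{jk}t_{ij}$ with $i<j$, as required.

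The main obstacle is the sign-flipping behavior in Step~2: when the active crossing migrates across the $x$-axis the direction variable $\delta$ changes sign, and the identity of ``wire $j$'' must be carefully reconciled with ``wire $-j$'' via the central symmetry of the wiring diagram. Formalizing this step rigorously --- perhaps by defining an oriented version of the boundary that is equivariant under the $x$-axis reflection and that tracks how $\delta$ interacts with positive versus negative wire labels --- is the delicate technical point, even though the underlying geometric picture is transparent in examples such as Figure~\ref{fig:fullbump}.
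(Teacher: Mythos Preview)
Your proposal is correct and follows essentially the same route as the paper: both maintain the invariant that deleting the last-pushed column gives a reduced word for $xt_{jk}$, argue that wire $j$ remains one of the two wires in every active crossing, and invoke the monotonic boundary from Lemma~\ref{lem:terminate} to force $i<j$. The paper proves $i<j$ by a short contradiction (if $i>j$ then wire $i$ lies entirely above wire $j$ in the diagram for $v=xt_{jk}$, hence above the final boundary, contradicting that it is the last crossing partner) rather than directly, and your sign-flipping concern is absorbed there by Remark~\ref{rem:bump.1}; these are minor presentational differences rather than a genuinely different approach.
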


\begin{proof}
When applying $B^{-}_{(j,k)}$ on $a$, the initial push is some
$P^{\delta_1}_{l_1}$ with $a_{1}\cdots \widehat{a_{l_{1}}} \cdots
a_{p} \in R(v)$ for some $v \in B_\infty$.  Let $b=P^{\delta_1}_{l_1}(a)$.  Then, one can
observe from the wiring diagrams that $s_{b_1}\cdots s_{b_{p}}= v
t_{ji_1}$ for some $i_1 \neq j,k$.

If $b$ is reduced, the bump is done.  Otherwise, by the Little bump
algorithm, there is some unique defect between $b_{l_1}$ and $b_{l_2}$
so we push next in column $l_{2}$.  We know $b_{1}\cdots
\widehat{b_{l_{2}}} \cdots b_{p} \in R(v)$ by construction and
Remark~\ref{rem:bump.2}. So when the next push occurs in column $l_2$
the new crossing will be between $j$ and another string $i_2$.
Continuing the algorithm, we see recursively that $B^{-}_{(j,k)}(a)
\in R(vt_{ji})$ for some $i\neq j,k$.

Assume for the sake of contradiction that $i>j$, and say the
$(i,j)$-crossing in the wiring diagram of $c=B^{-}_{(j,k)}(a)$ occurs
in column $l$.  By removing the $l^{th}$ swap from $c$ we get a wiring
diagram for $v$ that does not have $(j,i)$ as an inversion.  Thus, the
$i$-wire must stay entirely above the $j$-wire. Hence, the $i$ wire is
above the boundary of the last push. Thus, it cannot be a part of the
last push since the boundary moves monotonically according to the
proof of Lemma~\ref{lem:terminate}.  We can then conclude that $c \in
R(v t_{ij})$ for some $i<j$.

A similar proof holds for the second statement.
\end{proof}

We recall the notation of transition equations from
Section~\ref{s:intro}. If $w$ is not increasing, let $r$ be the
largest value such that $w_r>w_{r+1}$.  Define $s$ so that $(r<s)$ is
the lexicographically largest pair of positive integers such that
$w_{r}>w_{s}$.  Set $v=wt_{rs}$.  Let $T(w)$ be the set of all signed
permutations $w'=vt_{ir}$ for $i<r$, $i\neq 0$ such that
$\ell(w')=\ell(w)$.

\newcommand{\upset}{U}
\newcommand{\downset}{D}
Next, we show that the \textit{canonical} Little bump $B_{(r,s)}^{-}$ for $w$ respects the transition
equations in Theorem~\ref{thm:transition.B}.  This is best done by
describing the domain and range of Little bumps in greater generality.
For $v \in B_\infty $ and $j \in \mathbb{Z}-\{0 \}$, we define
\begin{align*}
\downset(v,j) & = \{vt_{ij} : \ i<j,\ i\neq 0 \ \text{and} \ \ell(vt_{ij}) = \ell(v)+1\} \\
\upset(v,j) & = \{vt_{jk} : \ j<k,\ k\neq 0 \ \text{and} \ \ell(vt_{jk}) = \ell(v)+1\}.
\end{align*}
Observe that we have $\downset(v,-j) = \upset(v,j)$.
We now prove the analog of \cite[Theorem 3]{little2003combinatorial}, from which we can deduce Theorem~\ref{t:little.bijection}.

\begin{lem}
\label{lem:bijective.transition}
Let $v \in B_\infty $ and $j \neq 0$.
Then 
\[
\sum_{x \in \upset(v,j)} |R(x)| =\sum_{y \in \downset(v,j)} |R(y)|.
\]
\end{lem}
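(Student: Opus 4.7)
The plan is to build an explicit bijection
\[
\Phi: \bigcup_{x \in \upset(v,j)} R(x) \;\longrightarrow\; \bigcup_{y \in \downset(v,j)} R(y)
\]
using the Little bumps, from which the desired equality of cardinalities is immediate. Without loss of generality assume $j>0$; the case $j<0$ follows from the identity $\upset(v,-j) = \downset(v,j)$ noted in the text just before the lemma. For $x = vt_{jk} \in \upset(v,j)$ and $a \in R(x)$, define $\Phi(a) = B^-_{(j,k)}(a)$. By Theorem~\ref{thm:bump.image}(1), $\Phi(a) \in R(vt_{ij})$ for some $i < j$; since each push in the algorithm either keeps the word reduced or triggers a subsequent push, the terminal word has the same length as $a$, so $vt_{ij} \in \downset(v,j)$. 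Symmetrically, define $\Psi: \bigcup_{y \in \downset(v,j)} R(y) \to \bigcup_{x \in \upset(v,j)} R(x)$ by $\Psi(b) = B^+_{(i,j)}(b)$ for $b \in R(vt_{ij})$, which by Theorem~\ref{thm:bump.image}(2) lands in $R(vt_{jk})$ for some $j < k$.

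The heart of the argument is to verify $\Psi \circ \Phi = \mathrm{id}$ and $\Phi \circ \Psi = \mathrm{id}$. This reduces to the claim that reversing the orientation $\delta$ reverses the Little bump algorithm push by push. Concretely, suppose $B^-_{(j,k)}$ acting on $a$ executes the sequence of pushes $P^{\delta_1}_{l_1}, P^{\delta_2}_{l_2}, \ldots, P^{\delta_m}_{l_m}$, producing a word $b$ whose new $(i,j)$-crossing is created by the final push. I claim that $B^+_{(i,j)}$ applied to $b$ executes exactly $P^{-\delta_m}_{l_m}, \ldots, P^{-\delta_1}_{l_1}$ and recovers $a$. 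The proof is by induction on $m$, using two inputs already in hand: the uniqueness of the defect column from Lemma~\ref{lem:lam}, which forces the next push location at each stage of the reverse bump to coincide with the previous push location of the forward bump; and the monotone movement of the boundary shown in the proof of Lemma~\ref{lem:terminate}, which prevents the reverse bump from wandering past any push it has not yet undone.

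The main obstacle is the careful bookkeeping of the sign parameter $\delta$ at the boundary of cases, in particular when the relevant crossing sits on the $x$-axis (forcing a sign flip in Step~1 and Step~2 of the bump) and when a push acts on a zero entry (where both $P^+_k$ and $P^-_k$ send $0 \mapsto 1$). These special cases must be matched up by direct verification from the case analysis in the definition of the Little bump, checking that the local sign conventions for $B^-_{(j,k)}$ and $B^+_{(i,j)}$ are genuinely inverse to one another in each subcase. Once the inverse property is established, $\Phi$ is a bijection between the two disjoint unions, and the identity of the lemma follows immediately by comparing cardinalities.
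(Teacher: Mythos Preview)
Your proposal is correct and follows essentially the same approach as the paper: both construct the bijection $\Phi(a)=B^-_{(j,k)}(a)$ using Theorem~\ref{thm:bump.image} to land in $\downset(v,j)$, and both invoke reversibility of the Little bump (the paper simply asserts ``the Little bump algorithm is reversible with $B^+_{(i,j)}(c)=a$'') to conclude. Your write-up is more explicit about the mechanism of reversibility; one small remark is that the monotone boundary from Lemma~\ref{lem:terminate} is not really what drives the inverse---the uniqueness of the defect column from Lemma~\ref{lem:lam} alone forces each step of $B^+_{(i,j)}$ to retrace the pushes of $B^-_{(j,k)}$ in reverse order.
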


\begin{proof} We will prove the equality bijectively by using a
  collection of Little bumps.  Define a map $M_{v,j}$ on $\cup_{x \in
    \upset(v,j)} R(x)$ as follows.  Say $a = a_1 \dots a_p \in R(x)$
  for some $x \in \upset(v,j)$.  Then $x=vt_{jk}$ for some unique
  $k>j$.  Furthermore, $B^{-}_{(j,k)}$ is a Little bump for $x$.  By
  Theorem~\ref{thm:bump.image}, we know that $B^{-}_{(j,k)}(a) \in
  R(vt_{ij})$ for some $i<j$ and $\ell(vt_{ij}) = \ell(v
  t_{jk})$. Thus, $v t_{ij} \in \downset(v,j)$.  Set $M_{v,j}(a) :=
  B^{-}_{(j,k)}(a)$ for all $a \in R(x)$.  In this way, we construct a
  map
$$
M_{v,j}: \cup_{x \in \upset(v,j)} R(x) \longrightarrow
\cup_{y \in \downset(y,j)} R(y).  
$$ 
Since the Little bump algorithm is reversible with
$B^+_{(i,j)}(c)=a$ in the notation above, we know $M_{v,j}$ is injective.

The bijective proof is completed by observing that
 $\downset(v,j) = \upset(v,-j)$, $\upset(v,j) =
 \downset(v,-j)$, and that
$B^-_{(-j,-i)} = B^+_{(i,j)}$ is a Little bump for $v t_{ij}
\in \downset(v,j)$ whose image, by the above argument, is a
reduced word of some $x \in \upset(v,j)$.  
\end{proof}

\begin{cor}[Restatement of Theorem~\ref{t:little.bijection}]
\label{c:little.bijection}
Let $a \in R(w)$ and $B^{-}_{(r,s)}$ be the canonical Little bump for $w$. 
Recall that $(r,s)$ is the  lexicographically last inversion in $w$.
Then $B^{-}_{(r,s)}(a)$ is a reduced word for $w'$ where
\[
w' \in T(w)=\{ w t_{rs} t_{lr}\ \mid\ l<r\ \mbox{and}\ \ell(w) = \ell(w t_{rs} t_{rl})\}.
\]
\end{cor}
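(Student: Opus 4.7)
The plan is to derive this corollary directly from Theorem~\ref{thm:bump.image}(1), after a brief check that $B^{-}_{(r,s)}$ is indeed a valid Little bump for $w$.

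First I would verify that the canonical choice $(r,s)$ satisfies the two hypotheses required to define $B^{-}_{(r,s)}$ on $R(w)$, namely $(r,s) \in \Inv_B(w)$ and $\ell(w t_{rs}) = \ell(w) - 1$. The inversion condition is immediate since $w_r > w_s$ by construction. The covering condition is the standard input to the transition equation: because $r$ is the position of the last descent of $w$, the tail $w_{r+1}, w_{r+2}, \dots$ is strictly increasing, and by lex-maximality of $(r,s)$ among positive inversions we have $w_{r+1} < \cdots < w_s < w_r < w_{s+1}$. No intermediate value $w_k$ with $r < k < s$ lies strictly between $w_s$ and $w_r$, which is the standard criterion identifying $t_{rs}$ as a Bruhat cover of $w$ in $B_\infty$, so $\ell(w t_{rs}) = \ell(w) - 1$.

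Next I apply Theorem~\ref{thm:bump.image}(1) with $x = w$ and $(j,k) = (r,s)$, which yields $B^{-}_{(r,s)}(a) \in R(w t_{rs} t_{ir})$ for some nonzero integer $i < r$. Because every push $P^{\delta}_{k}$ leaves the number of letters in the word unchanged and, by Lemma~\ref{lem:terminate}, the Little bump algorithm terminates with a reduced word, the output has length exactly $\ell(w)$. Therefore $\ell(w t_{rs} t_{ir}) = \ell(w)$, and setting $l := i$ together with the identity $t_{ir} = t_{ri}$ exhibits $w t_{rs} t_{lr}$ as an element of $T(w)$, completing the proof.

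All the substantive work has been packaged into Theorem~\ref{thm:bump.image}, so no genuine obstacle remains; the closest thing to a subtlety is the elementary Bruhat-cover verification in the first paragraph, which is well-known in the type C setting and is precisely the condition guaranteeing that the transition equation is non-vacuous.
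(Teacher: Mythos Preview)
Your proof is correct and rests on the same key ingredient as the paper's, namely Theorem~\ref{thm:bump.image}(1). The only difference is packaging: the paper routes the argument through Lemma~\ref{lem:bijective.transition} by observing that $\upset(wt_{rs},r)=\{w\}$ and $\downset(wt_{rs},r)=T(w)$, which has the side benefit of making the bijectivity claim in Theorem~\ref{t:little.bijection} explicit, whereas you invoke Theorem~\ref{thm:bump.image} directly and read off the length equality from the fact that the bump preserves word length and terminates in a reduced word. Both arguments are equally short once Theorem~\ref{thm:bump.image} is in hand; yours is slightly more self-contained for the corollary as literally stated, while the paper's phrasing simultaneously delivers the full bijection.
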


\begin{proof}
Observe $\upset(w t_{rs},r) = \{w\}$ and 
\[
\downset(w t_{rs},r) = \{ w t_{rs} t_{lr}\ \mid\ l<r,\ l \neq 0, \ \mbox{and}\ \ell(w) = \ell(w t_{rs} t_{rl})\}=T(w).
\]
The result now follows from Lemma~\ref{lem:bijective.transition} with
$v=wt_{rs}$ since $\ell(wt_{rs})=\ell(w)-1$ by choice of $(r,s)$.  
\end{proof}

\section{Kra\'{s}kiewicz insertion and the signed Little Bijection}\label{s:Kraskiewicz}

In this section, we show that Coxeter-Knuth moves act on $Q'(a)$ by
shifted dual equivalence, as defined in~\cite{haiman1992dual}.  We
then prove the remainder of Theorem~\ref{t:Q.preserving} by applying
properties of shifted dual equivalence and showing that Little bumps
and Coxeter-Knuth moves commute on reduced words of signed
permutations.

\newcommand{\fl}{\mathrm{fl}}

For a permutation $\pi \in S_{n}$, let $\pi|_I$ be the subword
consisting of values in the interval $I$.  Let $\fl(\pi|_I) \in
S_{|I|}$ be the permutation with the same relative order as
$\pi|_{I}$. Here $\mathrm{fl}$ is the \textit{flattening operator}.  Similarly,
for $Q$ a standard shifted tableau $Q|_I$ denotes the shifted skew
tableau obtained by restricting the tableau to the cells with values
in the interval $I$.  

\begin{defn}\cite{haiman1992dual}
Given a permutation $\pi\in S_n$, define the \emph{elementary
shifted dual equivalence} $h_i$ for all $1\leq i\leq n-3$ as follows.
If $n\leq 3$, then $h_1(\pi)=\pi $.
If $n=4$, then $h_1(\pi)$ acts by swapping $x$ and $y$ in the cases below,
\begin{equation}
1x2y \quad x12y \quad 1x4y \quad x14y \quad 4x1y \quad x41y \quad 4x3y \quad x43y,
\end{equation}
and $h_1(\pi) = \pi$ otherwise. 
If $n>4$, then $h_i$ is the involution that fixes values not in $I=\{i, i+1, i+2, i+3\}$ and permutes the values in $I$ via $\fl(h_i(\pi)|_I)= h_1(\fl(\pi|_I))$.
\end{defn}
\noindent
As an example, $h_1(24531)=14532$, $h_2(25134)=24135$, and $h_3(314526)=314526$.

Recall from Definition~\ref{d:ck.moves} that a
type B Coxeter-Knuth move starting at position $i$ is denoted by
$\beta_i$.
One can verify that this definition is equivalent to defining $h_i$ as
\[
h_i(\pi)=(\beta_i(\pi^{-1}))^{-1}.
\]

Given a standard shifted tableau $T$, we define $h_i(T)$ as the result
of letting $h_i$ act on the row reading word of $T$.  Observe $h_i(T)$ is
also a standard shifted tableau. We can define an
equivalence relation on standard shifted tableaux by saying $T$ and
$h_{i}(T)$ are \textit{shifted dual equivalent} for all $i$.

\begin{thm}\cite[Prop.~2.4]{haiman1992dual}\label{thm: Haiman}
Two standard shifted tableaux  are shifted dual equivalent if and only if they have the same shape.
\end{thm}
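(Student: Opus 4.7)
The plan is to prove the two directions separately, handling the forward direction by a finite check and using induction for the nontrivial reverse direction.

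For the forward direction, I would verify that each elementary shifted dual equivalence $h_i$ preserves the shape of a standard shifted tableau. By the flattening definition of $h_i$ for $n>4$, this reduces to the case $n=4$. There are only two shifted shapes of size $4$, namely $(4)$ with its unique SYT of row reading word $1234$, and $(3,1)$ whose two SYTs have row reading words $4123$ and $3124$ (reading top to bottom, left to right). A direct check of the eight swap patterns $1x2y, x12y, 1x4y, x14y, 4x1y, x41y, 4x3y, x43y$ on these row reading words confirms that $h_1(1234)=1234$ and that $h_1$ swaps $4123$ and $3124$, with each image still the row reading word of a standard shifted tableau of the same shape. For $n>4$, the operator $h_i$ acts only on the four-element window of values $\{i,i+1,i+2,i+3\}$ via flattening, and the local verification lifts to show $h_i$ preserves the shape globally.

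For the reverse direction, I would induct on $n=|\lambda|$, with base cases $n\leq 4$ settled by the enumeration above. For the inductive step, suppose the claim holds for all shifted shapes of size less than $n$, and let $T, T'$ be standard shifted tableaux of shape $\lambda$ with $|\lambda|=n$. Let $T_0 = T|_{[1,n-1]}$ and $T_0' = T'|_{[1,n-1]}$, which are standard shifted tableaux of shapes $\mu$ and $\mu'$ obtained from $\lambda$ by removing a corner. If $\mu = \mu'$, then by induction there is a sequence of elementary equivalences $h_i$ with $i\leq n-4$ connecting $T_0$ to $T_0'$; since these operators touch only values less than $n$, the same sequence connects $T$ to $T'$.

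If $\mu\neq \mu'$, then $\lambda$ has at least two removable corners, and it suffices to find a sequence of $h_i$ operators taking $T$ to some $T''$ whose restriction to $[1,n-1]$ has shape $\mu'$; the previous case then finishes the argument. This reduces to a local claim: by applying $h_{n-3}$, possibly with help from $h_{n-4}$, one can move the entry $n$ from one removable corner of $\lambda$ to an adjacent removable corner while rearranging the values $\{n-3,n-2,n-1\}$ compatibly. The main obstacle is verifying this claim, which requires a case analysis of the small number of local configurations a shifted shape can exhibit around a pair of removable corners (including the special behavior near the staircase diagonal) and checking that the appropriate swap pattern in the definition of $h_1$ is available in each case. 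Once this local analysis is carried out, the inductive structure completes the proof.
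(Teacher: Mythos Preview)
The paper does not prove this theorem; it is quoted from Haiman's 1992 paper and used as a black box throughout (for instance, the proof of Lemma~\ref{ax6} explicitly invokes it). So there is no in-paper argument to compare against, and your proposal should be judged on its own.

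Your forward direction is over-engineered and the reduction you state is not valid. Shape preservation is immediate: $h_i$ acts by permuting the values $\{i,i+1,i+2,i+3\}$ among the cells they already occupy, so the underlying shape cannot change. What would actually need checking is that the resulting filling remains \emph{standard}, and your reduction ``this reduces to the case $n=4$'' does not accomplish that: in a tableau of size $n>4$ the four cells containing $i,\dots,i+3$ form a shifted \emph{skew} shape, not one of the two straight shapes $(4)$ or $(3,1)$ you enumerate. The paper sidesteps this by simply asserting ``Observe $h_i(T)$ is also a standard shifted tableau''; if you want to prove it, you must run the eight swap patterns against all shifted skew shapes of four cells, not just the straight ones.

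Your reverse direction has the right inductive skeleton, and the reduction to ``move $n$ from one corner to another via $h_{n-3}$'' is exactly the crux. But note that your phrase ``possibly with help from $h_{n-4}$'' undersells what you actually have available: by the induction hypothesis you may freely rearrange \emph{all} of $1,\dots,n-1$ within the sub-shape, not just apply $h_{n-4}$. With that freedom the local claim becomes concrete: given corners $c$ and $d$, place $n$ in $c$, $n{-}1$ in $d$, choose a cell for $n{-}2$ lying strictly between $c$ and $d$ in reading order, and place $n{-}3$ still earlier; then the flattened window is one of $1324,1423,3124,4123$ and $h_{n-3}$ swaps $n$ and $n{-}1$. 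The genuine work you have left undone is verifying that such a standard filling always exists for every pair of corners of a shifted shape with $n\ge 5$ --- this is where the ``special behavior near the staircase diagonal'' you allude to must be handled, and it is not as short as your sketch suggests. The argument in the paper's Lemma~\ref{ax6} carries out essentially this construction, but that lemma as written already assumes the theorem you are trying to prove, so you cannot simply cite it.
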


Recall the notion of
jeu de taquin is an algorithm for sliding one cell at a time in a
standard tableau on a skew shape in such a way that the result is
still a standard tableau \cite{sagan1991}.  The analogous notion
for shifted tableaux was introduced independently in~\cite{sagan1987shifted}
and~\cite{worley1984theory}.

\begin{lem}\cite[Lemma 2.3]{haiman1992dual}\label{lem: slides and h commute}
Given two standard shifted tableaux $T$ and $U$ with $T=h_i(U)$,
let $T^\prime$ and $U^\prime$ be the result of applying any fixed
sequence of jeu de taquin slides to $T$ and $U$, respectively.  Then
$T^\prime=h_i(U^\prime)$.
\end{lem}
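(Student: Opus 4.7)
The plan is to reduce the statement to a single jeu de taquin slide and then carry out a local analysis. By induction on the length of the prescribed sequence, it suffices to prove the lemma for a single slide: that is, assume $T' = \mathrm{slide}(T)$ and $U' = \mathrm{slide}(U)$ where the same slide is performed on both tableaux, and show $T' = h_i(U')$. Composition then handles longer sequences automatically.

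The key structural observation is that $h_i$ is local in content. By the definition of $h_1$ (via flattening, this transfers to $h_i$), the tableaux $T$ and $U = h_i(T)$ agree on every cell whose entry lies outside the window $W = \{i, i+1, i+2, i+3\}$. Inside $W$, the two tableaux agree on at least two of the four cells; on the remaining (at most) two cells, $T$ and $U$ differ by a single swap of values, chosen according to one of the eight patterns listed in the definition of $h_1$. Thus to understand how a slide interacts with $h_i$, one only needs to track how the slide interacts with the two cells where $T$ and $U$ differ, together with the other cells of $W$ whose relative order enters into whether $h_1$ acts nontrivially.

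If the slide path misses both of the cells where $T$ and $U$ differ in both tableaux, then the slide acts identically on the ambient entries; the window $W$ is untouched as an unordered set of cells, the positions of the two swapped values are moved in the same way, and $T' = h_i(U')$ follows directly from the definitions. Otherwise, the slide path meets one or both of the differing cells, and the paths in $T$ and $U$ may diverge. One then performs a finite case analysis: restrict to the subconfiguration formed by the four cells of $W$ together with the slide path, flatten via $\mathrm{fl}$ to reduce to values in $\{1,2,3,4\}$, and for each of the eight defining patterns of $h_1$ and each possible local geometry of the slide (distinguishing inner from outer slides and paying particular attention to cells on the shifted diagonal) check by direct computation that the result is again related by $h_i$.

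The main obstacle is precisely this case analysis. It is finite but substantial, because shifted jeu de taquin has extra cases along the main diagonal that do not appear for straight tableaux, and these interact nontrivially with the eight swap patterns of $h_1$. The cleanest organization is to work at the level of the row reading word: a jeu de taquin slide corresponds to a specific elementary modification of the reading word, and one verifies directly that this modification commutes with each of the eight permutation patterns defining $h_1$. Once these local verifications are complete, the lemma follows by the reductions above, and the statement then extends to arbitrary sequences of slides by the induction at the start.
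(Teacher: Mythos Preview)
The paper does not supply its own proof of this lemma; it simply cites Haiman's original result \cite[Lemma~2.3]{haiman1992dual} and uses it as a black box. So there is no in-paper argument to compare against.

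Your outline is essentially Haiman's own strategy: reduce to a single slide, observe that $h_i$ only rearranges the values in the window $\{i,i+1,i+2,i+3\}$ (and in fact swaps at most two of them), and then do a finite local check on how a slide interacts with that window, taking care of the diagonal cases peculiar to shifted jeu de taquin. That is the right architecture, and the reduction steps you describe are correct.

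Two cautions about the details, though neither invalidates the plan. First, your suggestion to ``work at the level of the row reading word'' and treat a slide as ``a specific elementary modification of the reading word'' oversells how clean this is: a single shifted jeu de taquin slide can reorder many entries of the reading word in a nonlocal way, so you cannot literally reduce to a Knuth-like elementary move on words. What makes the verification finite is not a word-level reduction but the fact that only the relative positions of four specific values matter, so you can restrict to the skew subtableau on $\{i,i+1,i+2,i+3\}$ plus the slide cell and enumerate the possible local shapes. Second, as you acknowledge, the case analysis is where all the content lives, and you have not carried it out; in Haiman's paper this is exactly the part that requires work (and is in fact organized around the miniature tableaux on four cells rather than around the eight word patterns). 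So your proposal is a correct and standard outline, matching the cited source, but it is not yet a proof.
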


\begin{defn}\label{def:delta}
Given a standard shifted tableau $Q'$, define $\Delta(Q')$ as the
result of removing the cell containing 1, performing jeu de taquin
into this now empty cell, and subtracting 1 from the value of each of
the cells in the resulting tableau.
\end{defn}

\begin{lem}\cite[Theorem 3.24]{lam1995b}\label{lem: delta on Q}
Let $w$ be a signed permutation and $a = a_1 \cdots a_p \in R(w)$. 
Then under Kra\'{s}kiewicz insertion
\begin{equation}
 Q'(a_2 \cdots a_p) = \Delta (Q'(a_1 \cdots a_p)).
\end{equation}
\end{lem}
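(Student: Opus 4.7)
The plan is to induct on $p$, proving the stronger statement that \emph{both} the insertion tableau and the recording tableau behave as expected under removal of the first letter: $\tilde{P}^{(p-1)} := P'(a_2\cdots a_p)$ equals the result of removing the box at position $(1,1)$ from $P^{(p)} := P'(a_1\cdots a_p)$ and performing shifted jeu de taquin into the hole, and simultaneously $Q'(a_2\cdots a_p) = \Delta(Q'(a_1\cdots a_p))$. The base case $p=1$ is immediate: $Q'(a_1)$ is the one-box tableau containing $1$, which $\Delta$ empties, matching $Q'(\emptyset) = \emptyset$, and likewise $P'(a_1)$ has a single box that is removed.

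For the inductive step, assume the joint statement for words of length $p-1$. I would promote this to length $p$ by establishing a commutation between a single Kra\'skiewicz insertion and the shifted jeu de taquin slide opening $(1,1)$. Concretely, suppose $U$ is any unimodal shifted tableau and $U^\flat$ is obtained from $U$ by removing its $(1,1)$-entry and performing jeu de taquin to fill the resulting hole. The key technical lemma to prove is that for every non-negative integer $k$,
\[
U^\flat \leftarrow k \;=\; (U \leftarrow k)^\flat,
\]
and moreover the outer box newly created on the left-hand side sits at the jeu-de-taquin image of the outer box newly created on the right-hand side. Granted this commutation, applying it with $U = \tilde{P}^{(p-2)} = (P^{(p-1)})^\flat$ and $k = a_p$ advances both halves of the inductive hypothesis one step; a bookkeeping argument then shows that the positions of the cells labeled $1, 2, \ldots, p-1$ in $Q'(a_2 \cdots a_p)$ coincide with the positions of the cells originally labeled $2, 3, \ldots, p$ in $Q'(a_1 \cdots a_p)$ after the $\Delta$-slide and relabeling, which is exactly the required equality.

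The main obstacle is the commutation lemma. Its proof is a case analysis on how the Kra\'skiewicz bumping path of $k$ in $U$ interacts with the slide path created by removing the entry at $(1,1)$. When the two paths lie in disjoint rows, the commutation is immediate. When they interact, one must check the commutation against each local rule of Kra\'skiewicz insertion enumerated in Lam's row-insertion procedure: Edelman-Greene bumping on the increasing portion of a unimodal row, the mirrored insertion of the negated bumped entry $-k^{-}$ on the decreasing portion, and the special case where both $k = 0$ and the middle entry of the row is $0$. Against each of these, the two possible shifted jeu de taquin moves (slide right versus slide down) must be verified to commute with the insertion rule and to carry the output corner to the expected position. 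These checks are local and finite, analogous to those carried out for mixed insertion in~\cite{haiman1989mixed} and in Lam's thesis~\cite{lam1995b}; the main work is simply enumerating the cases carefully enough to be certain nothing has been missed, with special attention to the zero-handling rules that distinguish Kra\'skiewicz insertion from its type $A$ analog.
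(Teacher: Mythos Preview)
The paper does not supply its own proof of this lemma; it is quoted from Lam's thesis as \cite[Theorem~3.24]{lam1995b} and used as a black box in the proof of the subsequent lemma. So there is no in-paper argument to compare yours against.

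Your strategy is the natural direct one and is in the spirit of how such statements are established: induct on $p$, reducing to a local commutation between one Kra\'skiewicz insertion step and the shifted jeu de taquin slide emanating from $(1,1)$. One point needs more care than your write-up gives it. You phrase the key commutation $U^\flat \leftarrow k = (U\leftarrow k)^\flat$ on \emph{unimodal} tableaux, but shifted jeu de taquin---as set up in this paper, following Sagan and Worley---is defined for standard (or marked semistandard) shifted fillings with increasing rows and columns; the operation $(\,\cdot\,)^\flat$ is not a priori meaningful on a row-unimodal filling, and there is no reason to expect it to stay unimodal under a na\"ive slide. You therefore need either to define and justify an appropriate slide on unimodal tableaux, or---more cleanly---to recast the commutation purely on the $Q'$ side, carrying only shapes and new-box positions alongside the $P'$-insertion rather than sliding $P'$ itself. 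With that reformulation the inductive bookkeeping you describe goes through, and what remains is exactly the finite local case analysis you outline, with the zero-handling rules as the genuinely new ingredient beyond Haiman's mixed-insertion checks.
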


\begin{lem}
\label{prop: beta on Q} 
Let $w$ be a signed permutation, and let $a = a_1 \dots a_p \in R(w)$. 
Then $Q'(\beta_i(a))=h_i(Q'(a))$ for all integers $1\leq i \leq p-3$. 
\end{lem}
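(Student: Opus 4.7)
The plan is to induct on the length $p$ of the reduced word $a$, splitting the inductive step into the cases $i \geq 2$ and $i = 1$ so that everything descends to a base case $p = 4$, $i = 1$. The base case is a finite verification: I would enumerate the four-letter reduced words in $B_\infty$, compute $Q'$ directly from Kra\'skiewicz insertion, and check the identity $Q'(\beta_1(a)) = h_1(Q'(a))$ against the three subcases in Definition~\ref{d:ck.moves} and the swap list defining $h_1$.

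For the inductive step when $i \geq 2$, note that $\beta_i$ fixes $a_1$ and acts on the tail $a_2 \cdots a_p$ exactly as $\beta_{i-1}$. Lemma~\ref{lem: delta on Q} then gives
\[
\Delta\bigl(Q'(\beta_i(a))\bigr) \;=\; Q'\bigl(\beta_{i-1}(a_2\cdots a_p)\bigr),
\]
which by the inductive hypothesis equals $h_{i-1}(Q'(a_2 \cdots a_p)) = h_{i-1}(\Delta(Q'(a)))$. Because $i \geq 2$, the operator $h_i$ involves only cells of value at least two, so by Lemma~\ref{lem: slides and h commute} it commutes with the jeu de taquin slides inside $\Delta$; combined with the index shift induced by the global relabeling $v \mapsto v - 1$, this yields $h_{i-1}(\Delta(Q'(a))) = \Delta(h_i(Q'(a)))$. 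Thus $\Delta(Q'(\beta_i(a))) = \Delta(h_i(Q'(a)))$. Since both tableaux have the same shifted shape (using Theorem~\ref{t:kras} for $Q'(\beta_i(a))$ and Theorem~\ref{thm: Haiman} for $h_i(Q'(a))$) and both place $1$ at cell $(1,1)$, the map $\Delta$ is invertible on each, and the equality $Q'(\beta_i(a)) = h_i(Q'(a))$ follows.

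For the inductive step when $i = 1$ and $p \geq 5$, the words $a$ and $\beta_1(a)$ agree at positions $5, 6, \ldots, p$. By Theorem~\ref{t:kras}, the length-$(p-1)$ prefixes share the same insertion tableau $P'$, so inserting the common letter $a_p$ places the value $p$ at the same outer corner of $Q'(a)$ and $Q'(\beta_1(a))$. Moreover $h_1$ only permutes cells of values $1,2,3,4$, so it fixes the cell labeled $p \geq 5$ in $h_1(Q'(a))$. Deleting this outer-corner cell from each of the three tableaux yields $Q'(a_1 \cdots a_{p-1})$, $Q'(\beta_1(a_1\cdots a_{p-1}))$, and (via Lemma~\ref{lem: slides and h commute} applied to the trivial slide that removes the corner) $h_1(Q'(a_1 \cdots a_{p-1}))$; the latter two coincide by the inductive hypothesis, and restoring the common corner completes the step.

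The main obstacle is verifying the commutation $\Delta \circ h_i = h_{i-1} \circ \Delta$ for $i \geq 2$ cleanly: I would decompose $\Delta$ into the deletion of the cell labeled $1$, a specified sequence of jeu de taquin slides, and a global relabeling, then argue that each component is compatible with $h_i$, using Lemma~\ref{lem: slides and h commute} for the slides and tracking the index shift coming from the relabeling.
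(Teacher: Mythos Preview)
Your inductive reduction is sound and matches the paper's argument in substance: the paper also strips off the tail (your $i=1$ step, done in one stroke by truncating to $p=i+3$) and then strips off the head via $\Delta^{i-1}$ (your $i\ge 2$ step, iterated), invoking exactly the commutation $\Delta\circ h_i = h_{i-1}\circ\Delta$ from Lemma~\ref{lem: slides and h commute} and the invertibility of $\Delta$ on straight shapes.

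The gap is your base case. There are infinitely many four-letter reduced words in $B_\infty$, so ``enumerate'' is not a finite verification as written. You could attempt to reduce to finitely many patterns by arguing that the shape sequence in Kra\'skiewicz insertion depends only on certain order and adjacency data among $a_1,a_2,a_3,a_4$, but that is genuine work you have not supplied, and the presence of the letter $0$ makes the case analysis delicate.

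The paper sidesteps all computation here. After reducing to a four-letter word $b$ with $\beta_1(b)\neq b$, it argues: $Q'(b)$ and $Q'(\beta_1(b))$ are standard shifted tableaux of the same four-cell shape (since $P'(b)=P'(\beta_1(b))$ by Theorem~\ref{t:kras}) and are distinct (since Kra\'skiewicz insertion is injective). The only four-cell shifted shape admitting more than one standard filling is $(3,1)$, and $|SST((3,1))|=2$; one checks directly that $h_1$ swaps these two tableaux. Hence $Q'(\beta_1(b))=h_1(Q'(b))$ with no enumeration of words at all. The fixed-point case $\beta_1(b)=b$ is dispatched separately via Theorem~\ref{lem:same.peaks}: $\beta_1$ acts trivially iff $b$ has no peak in $\{2,3\}$, iff $Q'(b)$ has no peak there, iff $h_1$ acts trivially. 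You should replace your proposed enumeration with this counting argument.
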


\begin{proof}
  Recall that $\beta_i$ acts trivially on $a$ if and only if both
  $i+1, i+2 \notin peaks(a)$.  Similarly, $h_i$ acts trivially on
  $Q'(a)$ if and only if both $i+1, i+2 \notin peaks(Q'(a))$.  By
  Theorem~\ref{lem:same.peaks}, we then see $\beta_i$ acts trivially
  if and only if $h_i$ acts trivially.  Thus, the lemma holds if both
  $h_i$ and $\beta_i$ act trivially so we will assume that this is not the case.  

Since type B Coxeter-Knuth moves preserve Kra\'{s}kiewicz insertion
tableaux, we see $Q'(a)|_{[1,i-1]}=Q'(\beta_i(a))|_{[1,i-1]}$,
$Q'(a)|_{[i+4,p]}=Q'(\beta_i(a))|_{[i+4,p]}$ and that the shape of
$Q'(a)|_{[i,i+3]}$ and $Q'(\beta_i(a))|_{[i,i+3]}$ are the same.  In
particular, $Q'(a)$ differs from $Q'(\beta_i(a))$ by some
rearrangement of the values in $[i, i+3]$.  We need to show that this
rearrangement is the elementary shifted dual equivalence $h_i$.
The following proof of this fact is presented as a commuting diagram in Figure~\ref{fig: Q diagram}.

By omitting any extra values at the end of $a$, we may assume 
that $p=i+3$.  Now consider the tableaux $T$ and $U$
obtained by adding $i-1$ to each entry in $\Delta^{i-1}(Q'(a))$ and
$\Delta^{i-1}(Q'(\beta_i(a))$.  Because
$Q'(a)|_{[1,i-1]}=Q'(\beta_i(a))|_{[1,i-1]}$, it follows from the
definition of $\Delta$ that there is some fixed set of jeu de taquin
slides that relates both $Q'(a)|_{[i,i+3]}$ to $T$ and
$Q'(\beta_i(a))|_{[i,i+3]}$ to $U$.  Applying Lemma~\ref{lem: slides and
h commute}, we need only show that $T=h_i(U)$ to complete the proof.

By Lemma~\ref{lem: delta on Q}, we see
\[
\Delta^{i-1}(Q'(a))=Q'(a_ia_{i+1}a_{i+2}a_{i+3})
\]
 and 
 \[
 \Delta^{i-1}(Q'(\beta_i(a))) = Q'(\beta_1(a_ia_{i+1}a_{i+2}a_{i+3})).
 \] 

 Since $Q'(\beta_{1}(a_ia_{i+1}a_{i+2}a_{i+3}))$ and
 $Q'(a_ia_{i+1}a_{i+2}a_{i+3})$ are distinct by assumption and are
 necessarily standard tableaux of the same shifted shape with four
 cells, the shape must be $(3, 1)$.  Furthermore, there are only two
 standard tableaux of shifted shape $(3,1)$, so the two tableaux must
 be related by $Q'(\beta_{1}(a_{1}a_{2}a_{3}a_{4})) = h_{1}
 (Q'(a_{1}a_{2}a_{3}a_{4}))$.  Adding $i-1$ to each entry of the two
 tableaux in this equation changes $h_1$ to $h_i$ and yields the
 desired result, $T=h_i(U)$.
 %
%
\end{proof}

\begin{figure}[h]
\[
\xymatrixcolsep{2.3pc}\xymatrixrowsep{4pc}\xymatrix{
&Q'(a)|_{[i,i+3]}\ar@{<.>}[r]^-{h_i} \ar@{<->}[d]^-{j.d.t.}  \ar@{->}[dl]_-{\Delta^{i-1}}&Q'(\beta_i(a))|_{[i,i+3]} \ar@{<->}[d]^-{j.d.t.}\ar@{->}[dr]^-{\Delta^{i-1}}&\\
Q'(a_i\cdots a_{i+3}) \ar@{->}[r]^-{+(i-1)} \ar@{<->}@/_1.8pc/[rrr]^-{h_1}& T \ar@{<->}[r]^-{h_i} & U \ar@{<-}[r]^-{+(i-1)} & Q'(\beta_1(a_i\cdots a_{i+3}))\\
}
\] \vspace{.2in}\[
\xymatrixcolsep{2.5pc}\xymatrixrowsep{4pc}\xymatrix{ \ytableausetup{boxsize=normal, aligntableaux=center}
&\ytableaushort{{\scriptstyle i}{\scriptstyle i+1}{\scriptstyle i+3},\none\none{\scriptstyle i+2}}\ar@{<.>}[r]^-{h_i} \ar@{<->}[d]^-{j.d.t.}  \ar@{->}[dl]_-{\Delta^{i-1}}&\ytableaushort{{\scriptstyle i}{\scriptstyle i+2}{\scriptstyle i+3},\none\none{\scriptstyle i+1}} \ar@{<->}[d]^-{j.d.t.}\ar@{->}[dr]^-{\Delta^{i-1}}&\\
\ytableausetup{aligntableaux=bottom}
\ytableaushort{\none4,123} \ar@{->}[r]^-{+(i-1)} \ar@{<->}@/_1.8pc/[rrr]^-{h_1}& \ytableaushort{\none{\scriptstyle i+3},{\scriptstyle i}{\scriptstyle i+1}{\scriptstyle i+2}} \ar@{<->}[r]^-{h_i} & \ytableaushort{\none{\scriptstyle i+2},{\scriptstyle i}{\scriptstyle i+1}{\scriptstyle i+3}} \ar@{<-}[r]^-{+(i-1)} & \ytableaushort{\none3,124}\\
}
\]
\caption{The commuting relationships in the proof of Proposition~\ref{prop: beta on Q} on top and a generic example on bottom.
\label{fig: Q diagram}
}
\end{figure}
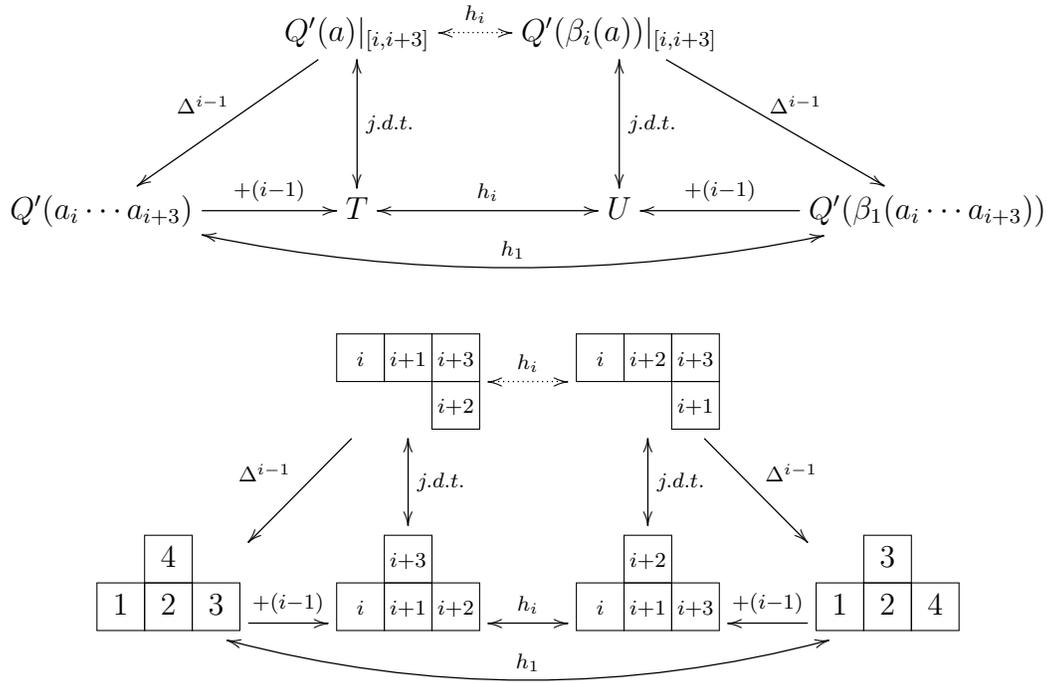

Next, we show that Coxeter-Knuth moves commute with Little bumps.

\begin{lem}
\label{lem:ck-little.commute}

Let $a = a_1 \dots a_p$ be a reduced word of the signed permutation $w$, $\beta_k$ a Coxeter-Knuth move for $a$ and $B^\delta_{(i,j)}$ a Little bump for $w$.
Then
\[
B^\delta_{(i,j)}(\beta_k(a)) = \beta_k(B^\delta_{(i,j)}(a)).
\]

\end{lem}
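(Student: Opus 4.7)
The plan is to reduce the lemma to a finite local analysis by exploiting the locality of both operations. The Coxeter-Knuth move $\beta_k$ acts only on the four-letter window $a_k a_{k+1} a_{k+2} a_{k+3}$ while fixing the rest of the word. The Little bump $B^\delta_{(i,j)}$, on the other hand, is a sequence of pushes $P^{\delta_1}_{l_1}, P^{\delta_2}_{l_2}, \ldots$ at various column indices, and by Lemma~\ref{lem:terminate} the boundary of the bump moves monotonically through the wiring diagram. My first step is therefore to split into cases according to whether any push index $l_m$ lies in the window $W = \{k, k+1, k+2, k+3\}$.

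If no push occurs at a column in $W$, then the bump modifies entries disjoint from those touched by $\beta_k$, and the two operators trivially commute. Otherwise, I would argue that the wiring diagram of $\beta_k(a)$ agrees with that of $a$ outside the column strip indexed by $W$ (since $\beta_k$ fixes the underlying signed permutation), so the trajectory of the bump outside the strip depends only on $w$ and the inversion $(i,j)$. Inside the strip, I plan to carry out a case analysis following the three subcases of Definition~\ref{d:ck.moves}: the long braid $0101 \leftrightarrow 1010$, the short-braid-with-witness moves of the form $a(b{+}1)b(b{+}1) \leftrightarrow a\,b(b{+}1)b$ (and its reverse), and the peak-moving commutations. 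In each subcase, I would match the pushes of $B^\delta_{(i,j)}$ on $a$ with those of $B^\delta_{(i,j)}$ on $\beta_k(a)$ column by column, using Remark~\ref{rem:bump.1} and Remark~\ref{rem:bump.2} to show that a push inside $W$ produces the ``same'' local change (modulo $\beta_k$) in either order. Lemma~\ref{lem:peaks} is useful here: the peak set inside $W$ is preserved by any push sequence, so whichever version of $\beta_k$ is applicable before the bump remains applicable (in its image form) after the bump, and vice versa.

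The main obstacle I anticipate is the peak-moving commutation case together with a bump that actually enters and exits the strip. In this situation, the pushes may arrive inside $W$, swap an adjacent pair of commuting crossings, and continue onward, and one must verify that the resulting push sequences starting from $a$ and from $\beta_k(a)$ really do produce words that differ only by $\beta_k$ at the end. The monotonicity of the boundary established in the proof of Lemma~\ref{lem:terminate} is the key tool: it forbids the boundary from ``looping back'' into $W$, so the bump touches each crossing at most once and the matching of pushes is essentially forced by the wiring diagram combinatorics. A conceptually cleaner alternative would be to use Lemma~\ref{prop: beta on Q} together with an independent proof that Little bumps preserve $Q'$, but the latter depends on the present lemma and would be circular; the direct combinatorial matching is the honest route.
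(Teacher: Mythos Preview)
Your plan is essentially the paper's own argument: reduce to the window $W=\{k,k+1,k+2,k+3\}$, observe that outside $W$ the words (hence the wiring diagrams) literally coincide, and then do a case analysis on the Coxeter-Knuth move inside $W$. Two small points of divergence are worth flagging. First, your phrase ``match the pushes \ldots\ column by column'' is not quite what happens: the paper explicitly notes that when the bump enters $W$, the pushed column in $a$ need not equal the pushed column in $\beta_k(a)$, because $\beta_k$ has rearranged the crossings inside the strip; what is invariant is the \emph{set of inversions} realized in $W$, so the same inversion gets pushed even if it sits in a different column. Second, the paper crystallizes the local verification into exactly two claims: (i) the two outputs of the bump inside $W$ still differ by $\beta_k$, and (ii) the last push that touches $W$ introduces the \emph{same} transposition in either order, so that the bump resumes identically once it leaves the strip. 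With these two claims isolated, the paper simply observes that the check is trivial when the four letters are spread by at least two, and otherwise reduces to a finite verification over reduced words of length~$4$ in $B_5$. Your boundary-monotonicity discussion and peak-set argument are reasonable heuristics, but the paper dispenses with them in favor of this finite check.
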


\begin{proof}

  First, observe that a Little bump and the Coxeter-Knuth move
  $\beta_k$ will only interact if one of the pushes in the bump is
  applied to an entry in the window $[k,k+3]$.  The inversions
  introduced by $a_i$ and $\beta_k(a)_i$ are the same when $i\not \in
  [k,k+3]$.  Therefore, since $a$ and $\beta_k(a)$ are reduced words
  of the same permutation, we see the inversions introduced by $a_k
  a_{k+1} a_{k+2} a_{k+3}$ in $a$ and $\beta_k(a_k a_{k+1} a_{k+2}
  a_{k+3})$ in $\beta_k(a)$ are the same as well.  Therefore if a
  crossing with index in $[k,k+3]$ is pushed when a Little bump
  $B^{\delta}_{(i,j)}$ is applied to $a$, such a crossing will also be
  pushed when $B^\delta_{(i,j)}$ is applied to $\beta_k(a)$, though
  not necessarily the same position.  Our argument relies on showing
  commutation can be reduced to a local check of how $\beta_k$
  interacts with $B^\delta_{(i,j)}$.  In particular, the result will
  follow from establishing two properties:
\begin{enumerate}

\item $B^\delta_{(i,j)} (a)$ and $B^\delta_{(i,j)}(\beta_k(a))$
 also differ by a Coxeter-Knuth move at position $k$.

\item The final push to a swap acted on by the Coxeter-Knuth move
has the same effect on $w$ for both $a$ and $\beta_{k}(a)$, hence would
introduce the same defect should the bump continue.

\end{enumerate}

When the entries acted on by a Coxeter-Knuth move differ by two or
more, these properties are trivial to confirm.  For entries that are
closer, the features can be checked for each type of Coxeter-Knuth
move either by hand or by computer program.  There are as many as four
checks for each type of Coxeter-Knuth move, depending on the initial
inversion and whether $0$ appears in the word.  These can be performed
by verifying the result for all possible bumps on reduced words in $B_5$ of
length 4. See Figure~\ref{fig:commute example} for example. \end{proof}

\begin{figure}
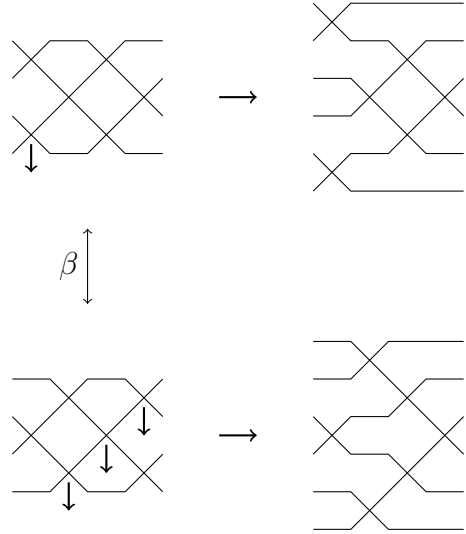

\caption{$B^{-}_{(1,2)}(1010)$ compared to $B^{-}_{(1,2)}(0101)$. The results differ by a Coxeter-Knuth move and the final pushes introduce the same transposition.
\label{fig:commute example}}
\begin{center}

\vspace{10pt}
\pic[scale = .5]

\draw (0,0) -- (3,3) -- (4,3);
\draw (0,1) -- (1,0) -- (2,0) -- (4,2);
\draw (0,2) -- (1,3) -- (2,3) -- (4,1);
\draw (0,3) -- (3,0) -- (4,0);

\draw[thick, ->] (.5,.25) -- (.5,-.5);

\draw[thick, ->] (5.5,1.5) -- (6.5,1.5);

\draw (12,-5) -- (10,-5) -- (9,-6) -- (8,-6);
\draw (12,-6) -- (11,-6) -- (10,-7) -- (9,-7) -- (8,-8);
\draw (12,-7) -- (9,-10) -- (8,-10);
\draw (12,-8) -- (9,-5) -- (8,-5);
\draw (12,-9) -- (11,-9) -- (10,-8) -- (9,-8) -- (8,-7);
\draw (12,-10) -- (10,-10) -- (9,-9) -- (8,-9);

\draw[<->] (2,-2) -- (2,-4);
\node at (1.5,-3) {$\beta$};

\draw (0,-6) -- (1,-6) -- (4,-9);
\draw (0,-7) -- (2,-9) -- (3,-9) -- (4,-8);
\draw (0,-8) -- (2,-6) -- (3,-6) -- (4,-7);
\draw (0,-9) -- (1,-9) -- (4,-6);

\draw[thick, ->] (3.5,-6.75) -- (3.5,-7.5);
\draw[thick, ->] (2.5,-7.75) -- (2.5,-8.5);
\draw[thick, ->] (1.5,-8.75) -- (1.5,-9.5);

\draw[thick, ->] (5.5,-7.5) -- (6.5,-7.5);

\draw (12,-1) -- (9,-1) -- (8,0);
\draw (12,0) -- (11,0) -- (9,2) -- (8,2);
\draw (12,1) -- (10,3) -- (9,3) -- (8,4);
\draw (12,2) -- (10,0) -- (9,0) -- (8,-1);
\draw (12,3) -- (11,3) -- (9,1) -- (8,1);
\draw (12,4) -- (9,4) -- (8,3);

\epic
\end{center}
\end{figure}

Notice that if we weakly order all shifted standard tableaux of shape
$\lambda$ by their peak sets in lexicographical order, then the unique
maximal element $U_\lambda$ is obtained by placing $1$ through
$\lambda_1$ in the first row, $\lambda_1+1$ through $\lambda_1+\lambda_2$ in the second row, and so
on. Further notice that $peaks(U_\lambda)=\{\lambda_1,
\lambda_1+\lambda_2, \lambda_1+\lambda_2+\lambda_3, \ldots \}$.

\begin{lem}
\label{t:Q.preserving2}
Let $w$ be a signed permutation, $a \in R(w)$, $B^{\delta}_{(i,j)}$ be
a Little bump for $w$ and $b = B^{\delta}_{(i,j)}(a)$.  Then
$Q^\prime(a) = Q^\prime(b)$.
\end{lem}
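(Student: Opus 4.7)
The plan is to reduce to a distinguished representative of each Coxeter-Knuth class (the one whose recording tableau is $U_\lambda$), verify the result there, and then propagate to the whole class using commutation between Little bumps and Coxeter-Knuth moves. Let $\lambda$ denote the shape of $P'(a)$. By Theorem~\ref{t:kras}, there is a unique word $a^{*}$ in the Coxeter-Knuth class of $a$ with $Q'(a^{*}) = U_\lambda$, so $a^{*} = \beta_{k_m} \cdots \beta_{k_1}(a)$ for some sequence of elementary Coxeter-Knuth moves. Setting $b^{*} := B^{\delta}_{(i,j)}(a^{*})$, Lemma~\ref{lem:ck-little.commute} gives $b^{*} = \beta_{k_m} \cdots \beta_{k_1}(b)$, so $b^{*}$ lies in the Coxeter-Knuth class of $b$.

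By Lemma~\ref{lem:peaks} together with Theorem~\ref{lem:same.peaks}, the peak set of $Q'(b^{*})$ equals that of $Q'(a^{*}) = U_\lambda$, namely $\{\lambda_1,\ \lambda_1+\lambda_2,\ \ldots\}$. To promote this to an equality of tableaux, let $\mu$ denote the shape of $P'(b)$. The Little bump restricts to a bijection (with inverse an opposite bump) between the Coxeter-Knuth class of $a$ and that of $b$, and this bijection preserves peak sets. Composing with $Q'$ on each side, the multisets of peak sets of $SST(\lambda)$ and $SST(\mu)$ must coincide; in particular, their lex-maximal elements agree:
\[
\{\lambda_1,\ \lambda_1+\lambda_2,\ \ldots\} \ = \ \{\mu_1,\ \mu_1+\mu_2,\ \ldots\}.
\]
Taking successive differences recovers the parts, so $\lambda = \mu$. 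Now $Q'(b^{*}) \in SST(\lambda)$ has the lex-maximal peak set of $SST(\lambda)$, which by the remark preceding the lemma is attained uniquely by $U_\lambda$. Therefore $Q'(b^{*}) = U_\lambda = Q'(a^{*})$.

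Finally, Lemma~\ref{prop: beta on Q} translates each $\beta_{k_i}$ on reduced words into the corresponding elementary shifted dual equivalence $h_{k_i}$ acting on $Q'$:
\[
h_{k_m} \cdots h_{k_1}(Q'(a)) \ = \ Q'(a^{*}) \ = \ Q'(b^{*}) \ = \ h_{k_m} \cdots h_{k_1}(Q'(b)).
\]
Since each $h_{k_i}$ is an involution, applying the reverse sequence yields $Q'(a) = Q'(b)$, as claimed.

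The main obstacle in this plan is the equality $\lambda = \mu$, since a priori a Little bump can change the ambient signed permutation and hence could, in principle, change the shape of $P'$. The resolution is that peak sets are a bijective invariant of the Coxeter-Knuth class (via $Q'$), so the lex-maximal peak set is preserved; this sequence of row-end indices is enough to recover the shape. Everything else is a direct use of the canonical tableau $U_\lambda$, the commutation of Little bumps with Coxeter-Knuth moves (Lemma~\ref{lem:ck-little.commute}), and the correspondence between Coxeter-Knuth moves on $a$ and shifted dual equivalences on $Q'(a)$ (Lemma~\ref{prop: beta on Q}).
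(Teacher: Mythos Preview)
Your proof is correct and follows essentially the same approach as the paper's: reduce to the distinguished representative with recording tableau $U_\lambda$, use commutation of Little bumps with Coxeter-Knuth moves (Lemma~\ref{lem:ck-little.commute}), and translate via Lemma~\ref{prop: beta on Q} to shifted dual equivalences on $Q'$. The only cosmetic difference is in how you pin down the shape: the paper argues by a symmetric dominance-type inequality (taking first $a=a'$, then $b=b'$), whereas you observe directly that the Little bump induces a peak-set-preserving bijection between the two Coxeter-Knuth classes, forcing the multisets of peak sets over $SST(\lambda)$ and $SST(\mu)$ to coincide and hence $\lambda=\mu$. Your version is arguably cleaner, but the underlying idea is the same.
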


\begin{proof}
 We first show that $Q^\prime(a)$ and $Q^\prime(b)$ have the same
shape.  By Lemma~\ref{lem:same.peaks} and Lemma~\ref{lem:peaks}, 
\[
peaks(Q'(a))=peaks(a)=peaks(b)=peaks(Q'(b)). 
\]
Let $a'$ and $b'$ be the reduced words with maximal peak sets 
in the Coxeter-Knuth class of $a$ and $b$, respectively. 
Applying Lemma~\ref{lem:ck-little.commute}, we may assume that $a=a'$.
The shape of $Q'(a')=U_\lambda$ is determined by its peak set.  Hence, the shape of $Q'(b)$ must
be at least as large as the shape of $Q'(a)$ in dominance order.  By
assuming that $b=b'$, we can conclude the converse. Hence, $Q'(a)$ and $Q'(b)$ have the same shape $\lambda$. Furthermore, $Q'(a')=U_\lambda=Q'(b')$.


We now proceed to showing that $Q'(a)=Q'(b)$.
By Theorem~\ref{t:kras}, there exists a sequence of
Coxeter-Knuth moves
$\beta=\beta_{i_1}\circ\beta_{i_2}\circ\cdots\circ\beta_{i_k}$ such
that $\beta(a') =a$.  From Lemma~\ref{prop: beta on Q}, we see
\[
Q'(a) = Q'(\beta(a')) = h_{i_1} \dots h_{i_k} (U_\lambda).
\]
Applying Lemma~\ref{lem:ck-little.commute}, $\beta(b')=b$, so
\[
Q'(b) = Q'(\beta(b')) = h_{i_1} \dots h_{i_k} (U_\lambda),
\]
from which we conclude that $Q'(a) = Q'(b) = Q'(B^\delta_{(i,j)}(a))$.
\end{proof}

As a consequence of Lemma~\ref{t:Q.preserving2}, we prove an analog of Thomas Lam's conjecture for signed permutations.
Two reduced words $a$ and $b$ \emph{communicate} if there exists a sequence of Little bumps $B^{\delta_1}_1, B^{\delta_2}_2, \dots, B^{\delta_n}_n$ such that $b = B^{\delta_n}_n(\dots B^{\delta_1}_1(a))$.
Since Little bumps are invertible, this defines an equivalence relation.

\begin{thm}[Restatement of Theorem~\ref{t:Q.preserving}, part 2]
\label{prop:lam.conj}
Let $a$ and $b$ be reduced words. Then $Q'(a) = Q'(b)$ if and only if they communicate via Little bumps.

\end{thm}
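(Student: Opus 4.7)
The plan is to handle the two directions separately. The ``if'' direction is immediate from Lemma~\ref{t:Q.preserving2}, since a single Little bump preserves $Q'$ and hence so does any finite sequence of them.

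For the ``only if'' direction, I would show that every reduced word $a$ communicates with a canonical reduced word $a_0$ that is uniquely determined by $Q'(a)$. The plan for constructing $a_0$ is to iterate the canonical bump $B^-_{(r,s)}$ from Corollary~\ref{c:little.bijection}: each application moves from $R(w)$ to $R(w')$ for some $w' \in T(w)$, and by Theorem~\ref{thm:transition.B} the iteration terminates after finitely many steps at a reduced word $a_0$ for an increasing signed permutation $w_0$. By Lemma~\ref{t:Q.preserving2}, we have $Q'(a) = Q'(a_0)$ throughout this process.

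To see that $a_0$ is determined by $Q'(a)$, let $\mu$ be the shape of $Q'(a)$, which agrees with the shape of $P'(a_0)$. Since $w_0$ is increasing, Equation~\eqref{eq:increasing} identifies $F^C_{w_0}$ with a single Schur $Q$-function $Q_\nu$, and comparing with the expansion in Definition~\ref{def:stan} forces $g^\nu_{w_0} = 1$ and $g^\kappa_{w_0} = 0$ for $\kappa \neq \nu$. Matching shapes gives $\nu = \mu$, so $w_0 = w(\mu)$ and $P'(a_0)$ is the unique shape-$\mu$ unimodal tableau arising from any reduced word of $w(\mu)$. Because Kra\'skiewicz insertion is bijective on pairs $(P', Q')$, the word $a_0$ is determined by $(P'(a_0), Q'(a_0))$, and both factors are in turn determined by $Q'(a)$ alone. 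Given $Q'(a) = Q'(b)$, both $a$ and $b$ reduce through canonical bumps to the same word $a_0 = b_0$, and the invertibility of Little bumps (running $B^+$ in reverse of $B^-$) yields a sequence of bumps from $a$ through $a_0$ back up to $b$.

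The main obstacle will be the uniqueness step: one must carefully separate the uniqueness of $P'$ at shape $\mu$ (a Schur $Q$-positivity statement that $g^\mu_{w(\mu)}=1$) from the uniqueness of a reduced word with prescribed $Q'$ (which requires the bijectivity of Kra\'skiewicz insertion on pairs of shifted tableaux of equal shape). The iterative reduction to an increasing signed permutation is more routine, resting on Corollary~\ref{c:little.bijection} and the termination guarantee in Theorem~\ref{thm:transition.B}. As a byproduct, this argument simultaneously proves Theorem~\ref{t:Q.preserving}(3), giving the promised unique reduced word for an increasing signed permutation with any prescribed recording tableau.
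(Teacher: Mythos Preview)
Your proposal is correct and follows essentially the same route as the paper: both use Lemma~\ref{t:Q.preserving2} for the ``if'' direction, and for the converse both iterate canonical Little bumps (via Corollary~\ref{c:little.bijection} and the termination in Theorem~\ref{thm:transition.B}) to reach a reduced word for an increasing signed permutation, then invoke Equation~\eqref{eq:increasing} and the bijectivity of Kra\'skiewicz insertion to see that this terminal word depends only on $Q'$. Your write-up is slightly more explicit in pinning down $w_0 = w(\mu)$ and in separating the two uniqueness ingredients, but the argument is the same.
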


\begin{proof}
If $a$ and $b$ communicate, then we
see $Q'(a) = Q'(b)$  by Lemma~\ref{t:Q.preserving2}.  Therefore we only need to prove the converse.  

Let $Q = Q'(a) = Q'(b)$.  We show that $a$ and $b$ both communicate
with some reduced word $c$ uniquely determined by $Q$.  Since
communication is an equivalence relation, this will complete our
proof.  Recall from Theorem~\ref{thm:transition.B} that by repeated
application of the transition equations, we may express any C-Stanley
symmetric function as the sum of C-Stanley symmetric functions of
increasing signed permutations.  Since canonical Little bumps follow the
transition equations by Corollary~\ref{c:little.bijection}, repeated applications of canonical Little bumps
will transform any reduced word $a$ into some reduced word $c$ of an
increasing permutation $u$.  Since $a$ and $c$ communicate, $Q'(a) =
Q'(c)$.  By Equation~\eqref{eq:increasing} and the fact that $u$ is
increasing, $F_{u}(X) = Q_\mu(X)$ for some $\mu$ and the reduced
expressions for $u$ are in bijection with the standard tableaux of
shifted shape $\mu$ under Kra\'{s}kiewicz insertion.  This implies
that $c$ is uniquely determined by $Q'(a)$, and hence for $Q'(b)$ as well.  
Therefore, every reduced word $a$ with $Q'(a) = Q$ communicates
with the same word $c \in R(u)$.
\end{proof}

\bigskip

\begin{cor}\label{cor:unique.min.red.word}[Restatement of Theorem~\ref{t:Q.preserving}, part 3] 
Every communication class under signed Little bumps has a unique reduced word for an increasing
signed permutation. 
\end{cor}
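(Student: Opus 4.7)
The plan is to establish existence and uniqueness separately, both of which are essentially already inside the proof of Theorem~\ref{prop:lam.conj}. For existence, starting from any reduced word $a$ in a given communication class, iterated application of the canonical Little bump $B^-_{(r,s)}$ at the lexicographically last inversion realizes the type C transition equation by Corollary~\ref{c:little.bijection}, and by Theorem~\ref{thm:transition.B} this recursion terminates after finitely many steps in a reduced word for some increasing signed permutation.

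For uniqueness, suppose $c \in R(u)$ and $c' \in R(u')$ both lie in the same communication class, with $u$ and $u'$ increasing. By Lemma~\ref{t:Q.preserving2}, $Q'(c) = Q'(c')$. Since $u$ is increasing, Equation~\eqref{eq:increasing} gives $F^C_u = Q_\mu$ where $\mu$ is the shifted shape determined by the negative entries of $u$, namely $u = w(\mu)$. Unwinding Definition~\ref{def:stan}, this forces $g^\mu_u = 1$ and $g^\nu_u = 0$ for $\nu \neq \mu$, so every reduced word for $u$ has a $P'$ tableau of shape $\mu$, and moreover only one $P'$ tableau is realized across all of $R(u)$. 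In particular, the shape of $Q'(c)$ equals $\mu$. Applying the same argument to $u'$ shows the shape of $Q'(c')$ equals $\mu'$, so $\mu = \mu'$ and hence $u = w(\mu) = w(\mu') = u'$.

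Finally, since reduced words of $u = w(\mu)$ all share a common $P'$ tableau, we have $P'(c) = P'(c')$, and combined with $Q'(c) = Q'(c')$ the injectivity of Kra\'{s}kiewicz insertion (Theorem~\ref{t:kras}) yields $c = c'$. The only delicate point, and the main thing to verify carefully, is the observation that $Q'$ alone determines both the shape $\mu$ and the increasing signed permutation $u$; after this is in hand, the rest is bookkeeping using results already established.
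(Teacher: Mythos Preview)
Your proof is correct and follows the same approach as the paper: the corollary has no separate proof there, as both existence (via iterated canonical Little bumps realizing the transition recursion) and uniqueness (via $Q'$ determining the shape $\mu$, hence the increasing permutation $u=w(\mu)$, hence $c$ itself by the bijectivity of Kra\'skiewicz insertion on $R(u)$) are already contained in the proof of Theorem~\ref{prop:lam.conj}. You have simply unpacked the sentence ``the reduced expressions for $u$ are in bijection with the standard tableaux of shifted shape $\mu$'' more explicitly via the coefficients $g^\nu_u$, which is fine.
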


For permutations, Theorem 3.32 in~\cite{lam1995b} shows that  Kra\'skiewicz insertion coincides with Haiman's shifted mixed insertion.
From this, we can conclude the following.

\begin{cor}\label{cor:arb.distinct.gens}
  Let $\{j_1< j_2<...< j_p\}$ be an increasing sequence of $p$ distinct non-negative
  integers.  Every communication class containing words of length $p$
  under signed Little bumps contains a reduced word that is a
  permutation of $\{j_1, j_2,..., j_p\}$.
\end{cor}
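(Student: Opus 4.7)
The plan is to invoke Haiman's shifted mixed insertion, which by Lam's Theorem 3.32 agrees with Kraskiewicz insertion on reduced words with distinct entries, in order to produce a reduced word in the given communication class that is a permutation of $\{j_1,\ldots,j_p\}$.

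By Theorem~\ref{prop:lam.conj}, the communication class $C$ of reduced words of length $p$ is characterized by a single standard shifted tableau $Q = Q'(a)$ for all $a \in C$; let $\mu$ denote its shape, so $|\mu| = p$. It therefore suffices to produce any reduced word $\pi$ of length $p$ whose entries form a permutation of $\{j_1,\ldots,j_p\}$ and which satisfies $Q'(\pi) = Q$, since such a $\pi$ automatically belongs to $C$.

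Haiman's shifted mixed insertion is a bijection between permutations of $\{j_1,\ldots,j_p\}$ and pairs of tableaux of a common shifted shape, and as the permutation varies its recording tableau ranges over every standard shifted tableau of size $p$. Thus some permutation $\pi$ of $\{j_1,\ldots,j_p\}$ has $Q$ as its mixed insertion recording tableau. I would then verify that $\pi$ is in fact a reduced word in $B_\infty$ via a wiring-diagram argument: with pairwise distinct generator indices, no pair of wires can cross twice, because restoring a once-crossed pair to adjacency in the reversed order would force the reuse of the generator that first swapped them (or one of its immediate neighbors). Hence each crossing contributes a distinct inversion to the signed permutation represented by $\pi$, forcing $\ell(\pi) = p$, so $\pi$ is reduced. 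With $\pi$ reduced, Lam's Theorem 3.32 identifies its mixed insertion recording tableau with $Q'(\pi)$ under Kraskiewicz insertion, giving $Q'(\pi) = Q$ and hence $\pi \in C$ by Theorem~\ref{prop:lam.conj}.

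The main obstacle is the wiring-diagram reducedness check in type $B$, where the generator $s_0$ produces a single crossing straddling the $x$-axis while each $s_i$ with $i \geq 1$ produces a symmetric pair of crossings. One must verify that the presence of $s_0$ and the mirrored structure of the diagram do not introduce coincidences beyond those already handled in type $A$; handling the wires $\pm 1$ with some care, the distinctness of generator indices continues to prevent any pair of wires from crossing more than once.
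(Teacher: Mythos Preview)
Your approach is correct and matches the paper's, which simply cites Lam's Theorem~3.32 (identifying Kra\'skiewicz insertion with Haiman's shifted mixed insertion on permutations) and asserts the corollary without further detail; you have merely filled in the steps. Your ``main obstacle'' is a non-issue: in any Coxeter group a word using each simple generator at most once is automatically reduced, since by induction $w'=s_{a_1}\cdots s_{a_{p-1}}$ is reduced with support $\{a_1,\dots,a_{p-1}\}$, and if $\ell(w's_{a_p})<\ell(w')$ then $a_p$ would lie in $\mathrm{supp}(w')$, contradicting distinctness---so no wiring-diagram argument is needed, and your stated one (that a second crossing forces reuse of the same generator) is not quite right as written, since two wires can become adjacent again at a different height.
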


This result can also be proved using Little bumps.

%

 \section{Axioms for shifted dual equivalence graphs}\label{s:axioms}

In this section, we build on the connection between shifted dual
equivalence operators $h_i$ and type B Coxeter-Knuth moves $\beta_i$
as stated in Lemma~\ref{prop: beta on Q}.  In particular, we define
and classify the shifted dual equivalence graphs associated to these
operators via two local properties.  Along the way, we also
demonstrate several important properties of these graphs. The approach
is analogous to the axiomatization of dual equivalence graphs by Assaf
\cite{assaf2010dual}, which was later refined by Roberts \cite{roberts2013dual}.

\begin{defn}\label{d:sdegs}
Fix a strict partition $\lambda \vdash n$.  By definition, $h_i$ acts
as an involution on the standard shifted tableaux of shape $\lambda$,
denoted $SST(\lambda )$.  Given $\lambda$, define the \textit{standard
shifted dual equivalence graph of degree} $n$ for $\lambda$, denoted
\[
\msg_\lambda= (V,\sigma, E_{1}\cup \ldots \cup E_{n-3})
\]
as follows.  The vertex set $V$ is $SST(\lambda )$, and the labeled edge sets
$E_{i}$ for $1 \leq i \leq n-3$ are given by the nontrivial orbits of
$h_i$ on $SST(\lambda )$.  To define the signature $\sigma$, recall from
Section~\ref{s:background} that every tableau $T \in SST(\lambda )$
has a peak set, denoted $peaks(T)$.  We encode a peak set by a
sequence of pluses and minuses denoted $\sigma(T) \in \{+,-\}^{n}$,
where $\sigma_{i}(T)= +$ if and only if $i$ is a peak in $T$.  We
refer to $\sigma (T)$ as the \textit{signature} of $T$.  Note that
peaks never occur in positions 1 or $n$ and that they never occur
consecutively.  Conversely, any subset of $[n]$ that satisfies these
properties is the peak set of some tableau, hence we will call it an
\textit{admissible peak set}.
\end{defn}


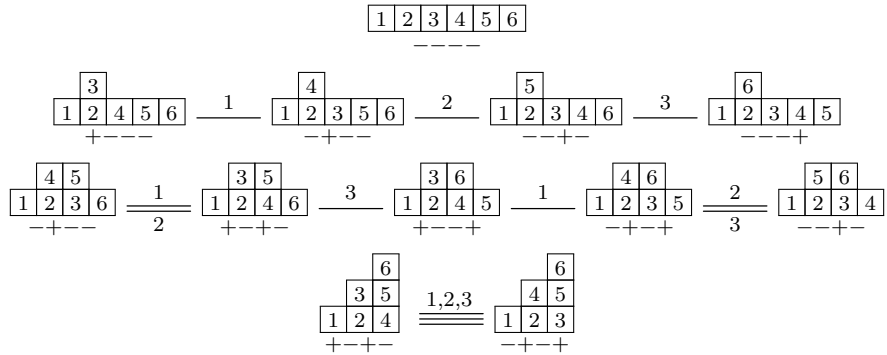
\begin{figure}[h]
\ytableausetup{aligntableaux=center, smalltableaux}
\[ \begin{array}{c}
  \vcenter{\vbox{
\xymatrix{ {\ytableaushort{123456} \atop ----}
}}}
 \\
\vcenter{\vbox{ \xymatrix{
		{\ytableaushort{\none3,12456} \atop +---} \ar@{-}[r]^{1}  & {\ytableaushort{\none4,12356}\atop -+--} \ar@{-}[r]^{2}	& {\ytableaushort{\none5,12346}\atop --+-} \ar@{-}[r]^3   & {\ytableaushort{\none6,12345}\atop ---+}
} }}
\\
\vcenter{\vbox{
	\xymatrix{
		 {\ytableaushort{\none45,1236} \atop -+--} \ar@{=}[r]^{1}_{2} & {\ytableaushort{\none35,1246}\atop +-+-} \ar@{-}[r]^{3}	& {\ytableaushort{\none36,1245}\atop +--+} \ar@{-}[r]^1	& {\ytableaushort{\none46,1235}\atop -+-+} \ar@{=}[r]^2_3 & {\ytableaushort{\none56,1234} \atop --+-} 
		 }}}
\\
\vcenter{\vbox{ \xymatrix{	
	{\ytableaushort{\none\none6,\none35,124}\atop +-+-}\; \ar@3{-}[r]^{1,2,3}	& {\ytableaushort{\none\none6,\none45,123}\atop -+-+} 
		} } } 
\end{array} \]
\caption{The standard shifted dual equivalence graphs of degree 6 omitting $\sigma_{1}$ and $\sigma_{6}$.
\label{fig: proto SDEG}
}
\end{figure}

In Figure~\ref{fig: proto SDEG}, all of the standard shifted dual
equivalence graphs of degree 6 are drawn and labeled by their
signatures omitting $\sigma_{1}$ and $\sigma_{6}$ since $1$ and $6$
can never be in an admissible peak set.  Already from this figure we
can see that standard shifted dual equivalence graphs are not always
dual equivalence graphs because they can have two vertices connected
by 3 edges labeled $i,i+1,i+2$.  Also, observe that if vertices $v$
and $w$ are contained in an $i$-edge, then
$\sigma_{[i+1,i+2]}(v)=-\sigma_{[i+1,i+2]}(w)$. Furthermore, notice
that the label $i$ of the edge and whether or not it is a double or
triple edge can be determined entirely from the peak sets.  This fact
will be used often in the proofs that follow.  From this figure, we
also can determine all of the possible standard shifted dual
equivalence graphs for $n=1,2,3,4,5$ by fixing the values higher than
$n$.

The standard shifted dual equivalence graphs have several nice
properties on par with the dual equivalence graphs or equivalently
Coxeter-Knuth graphs of type $A$.  By Theorem~\ref{thm: Haiman}, each
$\msg_\lambda$ is connected.  Observe that by
Definition~\ref{def:schur.Q}, a Schur $Q$-function $Q_\lambda$ is the
generating function for the sum of peak quasisymmetric functions
associated to the labels on the vertices of $\msg_\lambda$.  Recall from
Section~\ref{s:Kraskiewicz} that the lexicographically largest peak
set for all standard shifted tableaux of a fixed shape $\lambda$ is
given by the unique tableau $U_\lambda$.  Thus, the shape $\lambda$
can be recovered from the multiset of signatures on the vertices.

Each standard shifted dual equivalence graph is an example of the
following more general type of graph.

\begin{defn}
  Let $C$ and $S$ be two finite ordered lists.  An $S$-\emph{signed},
    $C$-\emph{colored graph} consists of the following data:
\begin{enumerate}\itemsep-.0in
\vspace{-.02in}
\item a finite vertex set $V$,
\item a signature function $\sigma:V \rightarrow \{+,- \}^{|S|}$
associating a subset of $S$ to each vertex,
\item a collection $E_i$ of unordered pairs of distinct vertices in $V$ for each  $i \in C$.  
\end{enumerate}
\noindent 
A signed colored graph 
is denoted $\mg =(V, \sigma, E)$
where $E=\cup_{i \in C}E_i$.  We 
say that $\mg$ has \emph{shifted
degree n} if $C=[n-3]$, $S=[n]$ and $\sigma(v)$ is an admissible peak
set for an integer sequence of length $n$ for all $v\in V$.  
The signature $\sigma(v)$ is encoded by a sequence in $\{+,- \}^{n}$ where
$\sigma_{i}(v)=+$ if and only if $i \in \sigma(v)$.  
We use the
notation $\sigma_{[i,j]}(v)$ to mean the subset $\sigma(v) \cap [i,j]$
which can be encoded by $+$'s and $-$'s as well. 
\end{defn}

\begin{defn}
  Given two $S$-signed $C$-colored graphs $\mg=(V,\sigma,E)$ and
  $\mg^\prime=(V^\prime, \sigma^\prime, E^\prime)$, a \emph{morphism
    of signed colored graphs} $\phi\colon G\to G'$ is a map from $V$
  to $V^\prime$ that preserves the signature function and induces a
  map from $E_i$ into $E^\prime_i$ for all $i \in C$. An
  \emph{isomorphism} is a morphism that is a bijection on the vertices
  such that the inverse is also a morphism.
\end{defn}

\begin{defn}
  A signed colored graph $\mg$ is a \emph{shifted dual
    equivalence graph} \textup{(SDEG)} if it is isomorphic to a
  disjoint union of standard shifted dual equivalence graphs.
\end{defn}

The next lemma allows us to classify the isomorphism type of any
connected SDEG by a unique standard shifted dual equivalence graph.

\begin{lem}\label{lem:triv automorphism}
  Let $\msg_\lambda$ and $\msg_\mu$ be any two standard shifted dual
  equivalence graphs. If $\phi\colon \msg_\lambda \to \msg_\mu$ is an
  isomorphism, then $\lambda=\mu$ and $\phi$ is the identity map.
\end{lem}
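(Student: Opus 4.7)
The plan is to exploit the uniqueness of the ``maximal'' vertex $U_\lambda$ in each $\msg_\lambda$, together with the fact that $\phi$ preserves signatures and commutes with each involution $h_i$.

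First, I would show $\lambda=\mu$. By the discussion immediately preceding the lemma, $peaks(U_\lambda)$ equals $\{\lambda_1,\lambda_1+\lambda_2,\ldots,\lambda_1+\cdots+\lambda_{k-1}\}$, and $U_\lambda$ is the \emph{unique} vertex of $\msg_\lambda$ whose signature attains the lexicographically maximum peak set realized on $SST(\lambda)$. Since $\phi$ is an isomorphism of signed colored graphs, it preserves signatures and hence induces a bijection on the signature multisets of $\msg_\lambda$ and $\msg_\mu$. Comparing lex-max signatures forces $\lambda_1=\mu_1$, $\lambda_1+\lambda_2=\mu_1+\mu_2$, and so on, so $\lambda=\mu$. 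Uniqueness of $U_\lambda$ within its signature class then forces $\phi(U_\lambda)=U_\lambda$.

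Second, I would propagate this fixed point through the whole graph. For each $i \in [n-3]$ the involution $h_i$ on $SST(\lambda)$ is determined by its nontrivial orbits, which coincide with $E_i$. Because $\phi$ is a bijection and both $\phi$ and $\phi^{-1}$ are morphisms of signed colored graphs, a short check (splitting on whether $v$ lies in an $i$-edge or not) shows $\phi \circ h_i = h_i \circ \phi$ on all of $SST(\lambda)$. By Haiman's connectivity result (Theorem~\ref{thm: Haiman}), every $T \in SST(\lambda)$ can be written as $T = h_{i_r} \cdots h_{i_1}(U_\lambda)$, and applying $\phi$ yields $\phi(T) = h_{i_r}\cdots h_{i_1}(U_\lambda) = T$. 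Hence $\phi$ is the identity.

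The only step with genuine content is the uniqueness of $U_\lambda$ among vertices attaining the lex-max signature. This is asserted in the paragraph preceding the lemma; a self-contained proof would proceed by induction on the number of rows, showing that any deviation from the row-filling pattern of $U_\lambda$ strictly lowers the peak set in lex order on reading words. Everything else reduces to formal properties of isomorphisms of signed colored graphs plus Haiman's connectivity theorem.
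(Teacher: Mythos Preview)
Your proposal is correct and follows essentially the same argument as the paper: both use preservation of signature multisets to identify the lex-maximal vertex $U_\lambda$, deduce $\lambda=\mu$ and $\phi(U_\lambda)=U_\lambda$, and then propagate the fixed point to all of $SST(\lambda)$ via connectivity under the $h_i$. Your write-up simply makes the commutation $\phi\circ h_i=h_i\circ\phi$ and the appeal to Theorem~\ref{thm: Haiman} more explicit than the paper's terse final sentence.
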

\begin{proof} Suppose that $\phi: \msg_\lambda \to \msg_\mu$ is an
isomorphism. Then the vertices of $\msg_\lambda$ and $\msg_\mu$ must
have the same multisets of associated peak sets. By looking at the
unique lexicographically maximal peak set in both, it follows that
$U_\lambda=U_\mu$. In particular, $\lambda=\mu$. Thus, $\phi$ is an
automorphism that sends $U_\lambda$ to itself.  Since $h_i$
defines the $i$-edges in both $\msg_\lambda$ and $\msg_\mu$ and both
graphs are connected, we see that  $\phi$ acts as the identity map.
\end{proof}

The connection between shifted dual equivalence graphs and the type B
Coxeter-Knuth graphs stated in Theorem~\ref{t:ckg} is now readily
apparent.  Recall, Theorem~\ref{t:ckg} states that every type B
Coxeter-Knuth graph $CK_B(w)$ with signature function given by peak
sets is a shifted dual equivalence graph, where the isomorphism is
given by the Kra\'{s}kiewicz $Q'$ function. It further states that
every shifted dual equivalence graph is also isomorphic to some
$CK_B(w)$. We give the proof of this theorem now.

\begin{proof}[{\bf Proof of Theorem~\ref{t:ckg}}] We show
that the map $Q'$ sending vertices in $CK_B(w)$ to their recording
tableaux is the desired isomorphism. This follows immediately from the
definition of the Kra\'{s}kiewicz insertion algorithm,
Lemma~\ref{lem:same.peaks} and Lemma~\ref{prop: beta on Q}.

  To see the converse statement, observe that $\msg_\mu$ is isomorphic
  to the Coxeter-Knuth graph $CK_B(w)$ for the increasing signed
  permutation $w=w(\mu)$ as defined just before Equation~\ref{eq:increasing}.
\end{proof}



\begin{defn}\label{def:sdeg.restriction}
  Given a signed colored graph $\mg=(V, \sigma, E)$ of shifted degree
$n$ and an interval of nonnegative integers $I=[a,b]\subset [n]$,
let 
\[
\mg^{I}=\left(V, \sigma, E_{a}\cup E_{a+1}\cup \dots \cup E_{b-3} \right)
\]
 denote the subgraph of $\mg$ using only the $i$-edges for $a\leq
i\leq b-3$.  Also define the \textit{restriction of $\mg$ to $I$}, to be
the signed colored graph
\[
\mg|_I= (V, \sigma^{\prime}, E^{\prime})
\]
\begin{enumerate}
\item 
$\sigma^\prime(v)=\{s-a+1 \mid s \in \sigma(v)\cap (a,b)\}$, 
\item
$E_i^\prime= E_{a+i-1}$ when $i \in [|I|-3]$.  
\end{enumerate}
\end{defn}

\noindent Notice that the vertex sets of $\mg$, $\mg^{I}$ and $\mg|_I$
are the same.  If $\mg$ is a signed colored graph with shifted degree
$n$ and $I=[a,b]$ then $\mg|_{I}$ will have shifted degree, but the
degree will be at most $|I|$. It could be strictly less than $|I|$ if
$n<b$.

Recall the two desirable properties of a signed colored graph $\mg$
stated in Theorem~\ref{SDEG properties}.  We name them here so we can
refer to them easily.  

\begin{enumerate}
\item \textbf{Locally Standard}: If $I$ is an interval of positive integers
with $|I|\leq 9$, then each component of $\mg|_I$ is isomorphic to a
standard shifted dual equivalence graph of degree up to $|I|$.
\item \textbf{Commuting}: If $(u, v) \in E_{i}$ and $(u,w) \in E_{j}$
then there exists a vertex $y \in V$ such that $(v,y) \in E_{j}$
and $(w,y) \in E_{i}$. Thus the components of $(V,E_{i} \cup E_{j})$
for $|i-j|>3$ are commuting diamonds.
\end{enumerate}

\begin{lem}\label{lem:properties hold for SDEGS}
  For any standard shifted dual equivalence graph $\msg_\lambda$, both
the Locally Standard Property and the Commuting Property hold.  In
fact, $\msg_\lambda|_{I}$ is an SDEG for all intervals $I$.  
\end{lem}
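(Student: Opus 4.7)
The lemma has two parts; I will prove the stronger statement that $\msg_\lambda|_I$ is an SDEG for every interval $I$, from which the Locally Standard Property is immediate when $|I| \leq 9$.

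\emph{Commuting Property.} Because $h_i$ permutes only those cells of a tableau whose entries lie in $[i, i+3]$, when $|i - j| > 3$ the two windows hold disjoint sets of values, so $h_i$ and $h_j$ act on disjoint sets of cells and commute as involutions. Given $(u, v) \in E_i$ and $(u, w) \in E_j$, the vertex $y := h_j h_i(u) = h_i h_j(u)$ yields $(v, y) \in E_j$ and $(w, y) \in E_i$.

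\emph{Restriction is an SDEG.} Fix $I = [a, b]$ and let $C$ be a connected component of $\msg_\lambda|_I$. By Definition~\ref{def:sdeg.restriction}, every $i$-edge of $\msg_\lambda|_I$ (for $i \in [|I|-3]$) is generated by $h_{a+i-1}$, which permutes only cells whose entries lie in $I$. Hence within $C$, the cells occupied by values outside $I$ are fixed across all vertices; let $\kappa/\nu$ denote the common shifted skew shape of the cells carrying values in $I$. For $T \in C$, set
\[
\phi(T) := \operatorname{rect}(\operatorname{std}(T|_{\kappa/\nu})),
\]
where $\operatorname{std}$ relabels the entries $a, a+1, \ldots, b$ as $1, 2, \ldots, |I|$ and $\operatorname{rect}$ is any fixed sequence of shifted jeu de taquin slides rectifying $\kappa/\nu$ to a straight shape. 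An $i$-edge of $\msg_\lambda|_I$ inside $C$ records the action of $h_{a+i-1}$ on $T$, which after $\operatorname{std}$ acts as $h_i$ on the standardized skew sub-tableau; Lemma~\ref{lem: slides and h commute} then gives $\phi(h_{a+i-1}(T)) = h_i(\phi(T))$. In particular every $T \in C$ rectifies to the same straight shape $\mu$ and $\phi$ intertwines edge actions. For signatures, any $s \in (a, b)$ has $s-1, s, s+1 \in I$, so the peak condition at $s$ in $T$ is detected inside $T|_{\kappa/\nu}$; standardization carries it to the peak condition at $s - a + 1$ in the standardized sub-tableau, and shifted jeu de taquin preserves it because each slide enacts a shifted Knuth relation on the reading word, which leaves invariant the comparison between the positions of any value and its neighbors $\pm 1$. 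Consequently $\sigma(\phi(T)) = \sigma'(T)$. Finally, each jeu de taquin slide is reversible, so $\operatorname{rect}$ (with the fixed slide sequence) is injective, giving injectivity of $\phi|_C$; and $\phi(C)$ is nonempty and $h_i$-closed in $SST(\mu)$, hence equals $SST(\mu)$ by Theorem~\ref{thm: Haiman}. Thus $\phi|_C\colon C \to \msg_\mu$ is an isomorphism of signed colored graphs, and $\msg_\lambda|_I$ is a disjoint union of standard SDEGs.

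\emph{Main obstacle.} The principal technical point is verifying that shifted jeu de taquin slides preserve the peak set of the reading word — equivalently, that each elementary shifted Knuth relation leaves unchanged the position-comparison criterion defining peaks — which is a direct case analysis on the (few) defining local rewriting rules.
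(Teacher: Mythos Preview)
Your proof is correct and follows essentially the same route as the paper: both verify the Commuting Property by noting that $h_i$ and $h_j$ act on disjoint value windows when $|i-j|>3$, and both obtain the SDEG structure on $\msg_\lambda|_I$ by restricting to the skew subtableau on values in $I$ and rectifying via shifted jeu de taquin, invoking Lemma~\ref{lem: slides and h commute} so that slides intertwine the $h_i$. The paper packages the rectification as repeated application of $\Delta$, whereas you use a generic fixed slide sequence and are more explicit about injectivity, surjectivity (via Theorem~\ref{thm: Haiman}), and signature preservation; your justification that each elementary shifted slide is a shifted Knuth move preserving relative positions of consecutive values is somewhat informal, but the underlying fact (shifted jeu de taquin preserves the descent set, hence the peak set, of a standard shifted tableau) is standard and the paper does not spell it out either.
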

\begin{proof}
  Consider a standard shifted dual equivalence graph $\msg_\lambda$ for $\lambda \vdash n$.  The
  Commuting Property must hold because $h_i$ acts according to the
  positions of the values in $[i,i+4]$ only. Hence, $h_i$ and $h_j$
  commute provided $|i-j|>4$.

To demonstrate the Locally Standard Property for a given interval $I$,
observe that we can restrict any $T\in SST(\lambda )$ to the values in
$I$ which form a skew shifted tableau and all the data for
$\msg_\lambda|_{I}$ will still be determined.  By Lemma~\ref{lem:
slides and h commute}, jeu de taquin slides commute with the
$h_{i}$'s.  So the isomorphism from $\msg_\lambda|_{I}$ to a union of
standard shifted dual equivalence graph is given by restriction and
repeated application of the jeu de taquin operator $\Delta$ defined in
Definition~\ref{def:delta}.
\end{proof}


We note that it is also straightforward to prove
Lemma~\ref{lem:properties hold for SDEGS} by appealing to the fact
that $\msg_\lambda$ is isomorphic to the Coxeter-Knuth graph $CK_B(w)$
for the increasing signed permutation $w=w(\lambda)$.  We know the
$\beta_{i}$'s satisfy the Commuting Property.  Furthermore,
restriction on a Coxeter-Knuth graph gives rise to an isomorphism with
another Coxeter-Knuth graph since every consecutive subword of a
reduced word is again reduced.  It is instructive for the reader to
consider the alternative proof for the lemmas below using
Coxeter-Knuth graphs if that language is more familiar.

\begin{lem}\label{ax6}
Given a strict partition $\lambda$ of size $n$, any two distinct
components $\ma$ and $\mb$ of $\msg_\lambda^{[n-1]}$ are connected by
an $(n-3)$-edge in $\msg_\lambda$.  In particular, any two vertices in
$\msg_\lambda$ are connected by a path containing at most one
$(n-3)$-edge that is not doubled by an $(n-4)$-edge. 
\end{lem}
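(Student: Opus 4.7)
I would proceed in two stages.

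Stage 1 (components of $\msg_\lambda^{[n-1]}$): for $1 \le i \le n-4$, the operator $h_i$ permutes only cells holding values in $\{i, i+1, i+2, i+3\} \subseteq [n-1]$, so it fixes the cell containing $n$. Since the value $n$ always lies at an outer corner of $\lambda$, each component of $\msg_\lambda^{[n-1]}$ consists of tableaux sharing the same outer corner as the location of $n$, and components are parametrized by the outer corners of $\lambda$.

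Stage 2 (producing the $(n-3)$-edge): fix distinct outer corners $c_1 = (i_1, j_1)$ and $c_2 = (i_2, j_2)$ of $\lambda$ with $c_1$ preceding $c_2$ in row reading order, i.e., $i_1 > i_2$. It suffices to exhibit a standard shifted tableau $T$ of shape $\lambda$ with $n$ at $c_1$ such that $h_{n-3}(T)$ has $n$ at $c_2$. Since $h_{n-3}$ acts by the flattened $h_1$ on the four-letter subword at the positions of $\{n-3,n-2,n-1,n\}$, and only the patterns $x12y$ and $1x2y$ in the definition of $h_1$ move the largest flattened value (corresponding to $n$), we are led to two configurations of the four positions listed in reading order as $P_1, P_2, P_3, P_4$: \textbf{(a)} $P_1 = c_1$ and $P_4 = c_2$ with values $n, n-3, n-2, n-1$ at $P_1, P_2, P_3, P_4$ (pattern $x12y$); or \textbf{(b)} $P_2 = c_1$ and $P_4 = c_2$ with values $n-3, n, n-2, n-1$ at $P_1, P_2, P_3, P_4$ (pattern $1x2y$). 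In either case, $h_{n-3}$ sends $n$ to $P_4 = c_2$.

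The heart of the proof is to show that for every pair $(c_1, c_2)$ at least one of (a) or (b) admits a completion to a valid standard shifted tableau. Each arrangement imposes a chain of outer-corner conditions: in (a), $P_3$ must be an outer corner of $\lambda - c_1 - c_2$ lying strictly between $c_1$ and $c_2$ in reading order, and $P_2$ an outer corner of $\lambda - c_1 - c_2 - P_3$ strictly between $c_1$ and $P_3$; in (b), the analogous chain with $P_1$ preceding $c_1$ in place of $P_2$. I would verify existence by case analysis on $i_1, i_2$ and on the slack $\lambda_{i_2} - \lambda_{i_1}$. In the generic slack cases (e.g., $\lambda_{i_2} \ge \lambda_{i_1} + 3$, or $i_1 > i_2 + 1$ so that an intermediate row supplies $P_3$), configuration (a) is realized by taking $P_2, P_3$ to be the two rightmost cells of row $i_2$ preceding $c_2$, or two cells lying in an intermediate row. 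The main obstacle is the tight case $i_1 = i_2 + 1$, $\lambda_{i_2} = \lambda_{i_1} + 2$, and $\lambda_{i_1} \ge 2$, where (a) fails because removing three cells from row $i_2$ would bring its length down to that of the reduced row $i_1$, violating strictness; in this case (b) is realized by placing $P_3 = (i_2, j_2 - 1)$ just left of $c_2$ and $P_1 = (i_1, j_1 - 1)$ just left of $c_1$. The four outer-corner conditions then reduce to the outer-corner hypotheses on $c_1, c_2$ and the strict decreasing of $\lambda$, and the remaining cells of $\lambda$ can be filled in any standard way.

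The ``in particular'' statement follows at once: two vertices in the same component of $\msg_\lambda^{[n-1]}$ are connected there by a path using only edges of labels $\le n-4$, involving no $(n-3)$-edges; two vertices in distinct components are linked by appending such internal paths to the $(n-3)$-edge produced in Stage 2. That crossing edge cannot also lie in $E_{n-4}$ (else its endpoints would share a component of $\msg_\lambda^{[n-1]}$), so the combined path contains exactly one $(n-3)$-edge not doubled by an $(n-4)$-edge, as required.
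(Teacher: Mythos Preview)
Your approach is essentially the same as the paper's: characterize the components of $\msg_\lambda^{[n-1]}$ by the location of $n$ (Stage~1 is exactly the paper's appeal to Haiman's theorem), then construct a tableau in which $h_{n-3}$ swaps the positions of $n$ and $n-1$ between the two chosen corners. The paper simply asserts that one can fill the shape so that $n-2$ lies between $n$ and $n-1$ in the reading word and $n-3$ precedes $n-2$; you attempt the more careful job of actually exhibiting the cells, which is commendable.

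However, your case analysis has a genuine gap. In the tight case $i_1 = i_2+1$, $\lambda_{i_2} = \lambda_{i_1}+2$, you assert that configuration~(b) is realized with $P_1 = (i_1, j_1-1)$, and that ``the four outer-corner conditions then reduce to the outer-corner hypotheses on $c_1, c_2$ and the strict decreasing of $\lambda$.'' This is false when there is a row above $c_1$ with minimal gap. Take $\lambda=(5,3,1)$, $c_1=(2,4)$, $c_2=(1,5)$: after removing $c_1$, $c_2$, and $P_3=(1,4)$, the cell $(2,3)$ is covered by $(3,3)$ and is \emph{not} an outer corner, so your chosen $P_1$ does not yield a standard tableau. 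The outer-corner condition on $P_1$ involves $\lambda_{i_1+1}$, not just $\lambda_{i_1}$ and $\lambda_{i_2}$. The fix is easy---take $P_1$ at the outer corner immediately above (here $(3,3)$), which always exists---but your reduction as stated is incorrect. You should also explicitly handle the boundary case $i_1=i_2+1$, $\lambda_{i_1}=1$, $\lambda_{i_2}=3$, which falls into neither your ``generic'' nor your ``tight'' case as written (though configuration~(a) does work there).
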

\begin{proof}
It follows from Theorem~\ref{thm: Haiman} that $\ma$ and $\mb$ are
characterized by the position of $n$ in their respective shifted
tableaux. Suppose $\ma$ and $\mb$ have $n$ in corner cells $c$ and
$d$, respectively, with 
$c$ in a lower row than $d$.  Then there exist tableaux $S \in \ma$
and $T \in \mb$ that agree everywhere except in the cells containing
$n-1$ and $n$ such that $n-2$ lies between $n$ and $n-1$ in the
reading word of $S$ and $n-3$ comes before $n-2$.  Thus, by definition
of $h_{n-3}$, we have $h_{n-3}(S)=T$, so $\ma$ and $\mb$ are connected
by an $(n-3)$-edge.  
This edge cannot be an $(n-4)$-edge since $\ma$ and $\mb$ are not connected in $\msg_\lambda^{[n-1]}$. 
\end{proof}


\begin{lem}\label{lem: sig window}
Let $\mg=(V,\sigma ,E)$ be a signed colored graph of shifted degree
$n$ satisfying the Commuting Property and the local condition that
$\mg|_{[j,j+5]}$ is a shifted dual equivalence graph for all $1\leq
j\leq n-5$.  If $v,w\in V$ are connected by an $i$-edge in $\mg$, then
$\sigma_{k}(v)=\sigma_{k}(w)$ for all $k \not \in [i-1,i+4]$.
\end{lem}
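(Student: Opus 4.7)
The plan is to exploit the commuting property together with a carefully chosen six-window to force the signatures of $v$ and $w$ outside $[i-1,i+4]$ to agree. The argument for $k\le i-2$ is analogous to the one for $k\ge i+5$, simply using windows with $a\le i-6$ rather than $a\ge i+4$, so I focus on the latter case. I would first select a six-window $W=[a,a+5]$ with $a=\max(i+4,k-4)$, chosen so that $k$ is an interior position of $W$ (that is, $a+1\le k\le a+4$) and every edge color of $\mg|_W$, namely $E_a,E_{a+1},E_{a+2}$, satisfies $|j-i|>3$. The rare boundary cases, where this choice of $W$ would extend past $[1,n]$, require alternative windows; in the most extreme such cases (e.g.\ $k\in\{1,n\}$) admissibility of the peak set already forces $\sigma_k=-$ for every vertex.

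The next step is to promote the commuting hypothesis to the statement that $\alpha_i$ and $\alpha_j$ commute as involutions on all of $V$, for each $j\in\{a,a+1,a+2\}$. If a vertex $u$ is incident to both an $i$- and a $j$-edge, this is the Commuting Property verbatim. If $u$ is incident to an $i$-edge but no $j$-edge, I claim $\alpha_i(u)$ is likewise not incident to any $j$-edge; for if $(\alpha_i(u),v')\in E_j$, then applying the Commuting Property to the pair $(\alpha_i(u),v')\in E_j$ and $(\alpha_i(u),u)\in E_i$ would produce a $j$-edge out of $u$, a contradiction.

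With global commutativity in hand, let $C_v$ and $C_w$ denote the connected components of $v$ and $w$ in $\mg|_W$, and set $\phi:=\alpha_i|_{C_v}$. Then $\phi$ is a bijection $C_v\to C_w$ that intertwines every edge involution of $\mg|_W$, hence an isomorphism of colored graphs. Since $\mg|_W$ is a shifted dual equivalence graph by hypothesis, there exist strict partitions $\mu,\nu$ of $6$ and signed colored graph isomorphisms $C_v\cong\msg_\mu$ and $C_w\cong\msg_\nu$. A direct enumeration shows the four strict partitions of $6$ yield standard shifted dual equivalence graphs with pairwise distinct vertex counts (namely $1,4,5,2$ for shapes $(6),(5,1),(4,2),(3,2,1)$), so $|C_v|=|C_w|$ forces $\mu=\nu$.

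The main obstacle is to upgrade the colored isomorphism $\phi$ to a signed colored isomorphism, at which point Lemma~\ref{lem:triv automorphism} forces the induced automorphism of $\msg_\mu$ to be the identity, giving $\sigma^\prime(v)=\sigma^\prime(w)$ in $\mg|_W$ and hence $\sigma_k(v)=\sigma_k(w)$. This step is delicate because some $\msg_\mu$, notably $\msg_{(3,2,1)}$, admit colored automorphisms that fail to preserve signatures. To rule these out, I would use the extra rigidity coming from the fact that $\phi$ is induced by the globally defined involution $\alpha_i$, combined with the local six-window conditions at the windows $[i-1,i+4]$, $[i,i+5]$, and $[i+1,i+6]$ immediately around the $i$-edge; the restricted signatures of $v$ and $w$ in those windows fit together in a rigid pattern that is incompatible with any non-signature-preserving automorphism of $\msg_\mu$.
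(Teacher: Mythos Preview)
Your argument runs into a genuine gap at the final step. You correctly identify the obstruction: when the window component has shape $(3,2,1)$, the colored graph $\msg_{(3,2,1)}$ admits a nontrivial colored automorphism (the swap of its two vertices) that is not signature-preserving, so the bare colored isomorphism $\phi:C_v\to C_w$ does not by itself force $\sigma'(v)=\sigma'(w)$. Your proposed remedy, however, cannot work as stated. The windows $[i-1,i+4]$, $[i,i+5]$, $[i+1,i+6]$ only constrain $\sigma_j$ for $j$ close to $i$; when $k$ is far from $i$ there is no overlap between those windows and your window $W$, and you have built no mechanism to transport information across that gap. The phrase ``extra rigidity coming from the globally defined $\alpha_i$'' is never made precise, and I do not see how to make it precise without some form of induction on the distance $|k-i|$.

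The paper's argument is both simpler and structurally different: it proceeds by induction on $k\ge i+5$ (and symmetrically for $k\le i-2$). The key observation is that, by the local SDEG condition, a vertex $u$ admits a $(k-2)$-edge if and only if $\sigma_{k-1}(u)=+$ or $\sigma_k(u)=+$; since peaks are never adjacent, this means $\sigma_k(u)$ is determined by $\sigma_{k-1}(u)$ together with whether $u$ admits a $(k-2)$-edge. For $k\ge i+6$ the color $k-2$ satisfies $|k-2-i|>3$, so the very global-commutativity argument you already gave shows that $v$ admits a $(k-2)$-edge iff $w$ does; combined with the inductive hypothesis $\sigma_{k-1}(v)=\sigma_{k-1}(w)$, this yields $\sigma_k(v)=\sigma_k(w)$. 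This one-position-at-a-time propagation is precisely the missing ingredient in your approach.
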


\begin{proof}
The lemma clearly holds for standard SDEGs by the definition of the
shifted dual equivalence moves $h_{i}$ which determine the $i$-edges.
For $k=i+5$, the lemma holds since $\mg|_{[i,i+5]}$ is a shifted dual
equivalence graph.  Now assume that $i+5< k\leq n$.  Say $v,w\in V$
are connected by an $i$-edge in $\mg$, and assume
$\sigma_{j}(v)=\sigma_{j}(w)$ for all $i+5 \leq j <k$ by induction.  By
the local condition, the vertex $v$ admits an $(k-2)$-edge if and only
if $\sigma_{k-1}(v)=+$ or $\sigma_{k}(v)=+$. 
These possibilities are exclusive
since the signature encodes an admissible peak set.  Thus,
$\sigma_{k}(v)$ is determined by $\sigma_{k-1}(v)$ and the presence or
absence of an adjacent $(k-1)$-edge.  Since $i$-edges and
$(k-2)$-edges commute for $k-2 - i \geq 4 $ by the Commuting Property,
we know that $v$ admits a $(k-2)$-edge if and only if $w$ admits a
$(k-2)$-edge.  Since $\sigma_{k-1}(v)=\sigma_{k-1}(w)$ we obtain
$\sigma_{k}(v)=\sigma_{k}(w)$ by the same considerations.  Therefore,
recursively $\sigma_{k}(v)=\sigma_{k}(w)$ for all $i+4<k \leq n$.

A similar argument works for all $1\leq k <i-1$.  This completes the proof.
\end{proof}

\begin{lem}\label{lem: injective iso}
  Let $\lambda$ be a strict partition of $n$ and $\mg=(V,\sigma ,E)$
be a signed colored graph of shifted degree $n$ satisfying the Locally
Standard and Commuting Properties.  If $\phi: \mg \longrightarrow
\msg_{\lambda}$ is an injective morphism, then it is an isomorphism.
\end{lem}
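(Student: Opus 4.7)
The plan is to prove surjectivity of $\phi$, after which the fact that $\phi^{-1}$ is also a signed colored graph morphism will readily follow. By connectivity of $\msg_\lambda$ (Theorem~\ref{thm: Haiman}), surjectivity reduces to showing that the image $\phi(V)$ is closed under taking edge-neighbors in $\msg_\lambda$: whenever $\phi(v)$ is joined to $T$ by an $i$-edge in $\msg_\lambda$, I must produce $v' \in V$ with $\phi(v') = T$ and $(v, v') \in E_i(\mg)$.

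The essential technical ingredient is a \emph{signature criterion for edges in standard SDEGs}: a vertex $T \in \msg_\mu$ admits an $i$-edge if and only if $\sigma_{i+1}(T) = +$ or $\sigma_{i+2}(T) = +$. This follows directly from the definition of $h_i$ via flattening to $h_1$ on $S_4$: a direct inspection of the eight patterns $1x2y,\ x12y,\ \ldots,\ x43y$ shows that $h_1(\pi) \neq \pi$ for $\pi \in S_4$ exactly on those 16 permutations whose peak set (as values) contains 2 or 3. Flattening translates this to arbitrary $i$ in arbitrary degree.

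Given this, the main argument proceeds as follows. Suppose $(\phi(v), T) \in E_i(\msg_\lambda)$. By the criterion, $\sigma(\phi(v))$ has a peak at $i+1$ or $i+2$, and by signature preservation the same holds for $\sigma(v)$. Apply the Locally Standard Property to $I = [i, i+3]$: the component $C$ of $v$ in $\mg|_I$ is isomorphic to some standard SDEG $\msg_\mu$ with $|\mu| \leq 4$. The key case analysis shows that among standard SDEGs of degree at most four, a vertex whose length-four signature has a $+$ in position 2 or 3 occurs only in $\msg_{(3,1)}$: each singleton $\msg_{(k)}$ for $k \leq 4$ carries only the signature $----$, and $\msg_{(2,1)}$ is disconnected, so its (singleton) components cannot appear as the connected component $C$ under an isomorphism of signed colored graphs of shifted degree four. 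Therefore $C \cong \msg_{(3,1)}$, whence $v$ has a 1-edge in $\mg|_I$, that is, an $i$-edge in $\mg$ to some $v'$. Applying $\phi$ to $(v,v')$ yields an $i$-edge at $\phi(v)$ in $\msg_\lambda$; since $h_i$ is an involution, $\phi(v)$ has a unique $i$-neighbor, forcing $\phi(v') = T$.

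Iterating along paths of $\msg_\lambda$ gives $\phi(V) = V(\msg_\lambda)$, so $\phi$ is surjective. The same pullback argument shows that $\phi$ reflects edges, so $\phi^{-1}$ preserves them, and $\phi$ is an isomorphism. The main obstacle is the case analysis in the third paragraph: one must carefully rule out the possibility that $v$ sits in a singleton component of $\mg|_I$ despite having a non-trivial peak in its length-four signature, and this crucially uses the fact that the signature-preserving isomorphism required by Locally Standard forces the shifted degrees to match.
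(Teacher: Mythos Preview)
Your proof is correct and takes essentially the same route as the paper's: both hinge on the signature criterion that a vertex admits an $i$-edge if and only if $\sigma_{i+1}=+$ or $\sigma_{i+2}=+$, then use signature preservation to transfer this between $\mg$ and $\msg_\lambda$, and finally invoke connectivity of $\msg_\lambda$ for surjectivity. The paper reaches the criterion by appealing to (the proof of) Lemma~\ref{lem: sig window}, whereas you derive it directly from the Locally Standard Property on an interval of length four; this is a cosmetic difference only.

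One small correction: your remark that ``$\msg_{(2,1)}$ is disconnected'' is not right---it has a single vertex and is trivially connected. More to the point, this case never arises: for $I=[i,i+3]\subset[n]$ the restriction $\mg|_I$ has shifted degree exactly $4$, and an isomorphism of signed colored graphs preserves degree, so the only possible isomorphism types for components are $\msg_{(4)}$ and $\msg_{(3,1)}$. Your substantive conclusion (that the signature rules out $\msg_{(4)}$) stands.
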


  \begin{proof} Let $v \in V$ and say $\phi (v)=T \in SST(\lambda)$.
Since $\phi$ is signature preserving and $\mg$ is Locally Standard,
we can apply Lemma~\ref{lem: sig window}  to show that $v$ has an
$i$-neighbor in $\mg$ if and only if $T$ has an $i$-neighbor in
$\msg_{\lambda}$ and a similar statement holds for each of their
neighbors.  Furthermore, since $\phi$ is an injective morphism $(v,w)
\in E_{i} \cap E_{j}$ if and only if $h_{i}(T)=h_{j}(T)=\phi (w)$.
Thus, $\phi$ induces a bijection from the neighbors of $v$ to the
neighbors of $T$ that preserves the presence or absence of
$i$-neighbors.  In particular, every neighbor of $T$ in
$\msg_{\lambda}$ is in the image of $\phi$.  Since $\msg_{\lambda}$ is
connected, there is a path from $T$ to any other vertex $S$ in
$\msg_{\lambda}$ and by iteration of the argument above we see that
$\phi$ maps some vertex in $V$ to $S$.  Hence, $\phi$ is both
injective and surjective on vertices, and the inverse map is also a
morphism of signed colored graphs.  Thus, $\phi$ is an isomorphism.
\end{proof}

With Lemma~\ref{lem: injective iso} in mind, our goal will be to
demonstrate the existence of an injective morphism from any connected signed
colored graph satisfying the Locally Standard and Commuting Properties
to a standard SDEG. To do this, we
will employ an induction on the degree of the signed colored graphs in
question. The next lemma is an important part of that induction.

\begin{lem}\label{lem: unique extension} 
  Let $\mg=(V,\sigma, E_1 \cup \ldots \cup E_{n-2})$ be a signed
colored graph of shifted degree $n+1$ that satisfies the following
hypotheses.
\begin{enumerate}
\item The Commuting Property holds on all of $\mg$.  
\item Both $\mg|_{[n]}$ and $\mg|_{[n-6,n+1]}$ are shifted dual
equivalence graphs.
\end{enumerate}
Let $\mc$ be a component of $\mg^{[n]}$.  Then the following
properties hold:
\begin{enumerate}
\item There exists a unique strict partition $\mu$ of degree $n+1$ and a
signed colored graph isomorphism $\phi$ mapping $\mc$ to a
component of $\msg_\mu^{[n]}$.
\item For every vertex $v\in V(\mc)$,\ $v$ has an $(n-2)$-neighbor in
$\mg$ if and only if $\phi(v)$ has an $(n-2)$-neighbor in $\msg_\mu$.
\end{enumerate}
\end{lem}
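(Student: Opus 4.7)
The plan is to use the two parts of hypothesis (2) in tandem: the SDEG structure of $\mg|_{[n]}$ identifies the shape $\lambda\vdash n$ with $\mc|_{[n]}\cong \msg_\lambda$, and the SDEG structure of $\mg|_{[n-6,n+1]}$ pins down the corner cell $c$ of $\lambda$ where $n+1$ must be placed, giving $\mu=\lambda\cup\{c\}\vdash n+1$. The proof then reduces to an edge-by-edge consistency check for the resulting extension $\phi\colon \mc \to $ (component of $\msg_\mu^{[n]}$).

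First, since $\mg|_{[n]}$ is an SDEG by hypothesis, the component $\mc|_{[n]}$ is isomorphic to a unique standard SDEG $\msg_\lambda$ with $\lambda\vdash n$, by Lemma~\ref{lem:triv automorphism}. Let $\phi_0\colon \mc|_{[n]}\to \msg_\lambda$ denote this isomorphism and write $T_v:=\phi_0(v)$ for $v\in V(\mc)$. My goal is to find a corner cell $c$ of $\lambda$ so that, setting $\mu:=\lambda\cup\{c\}$ and $\phi(v):=T_v\cup\{n{+}1\text{ at }c\}$, the map $\phi$ is a signature-preserving isomorphism of $\mc$ onto a component of $\msg_\mu^{[n]}$.

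Second, to locate $c$, I would pick any $v_0\in V(\mc)$ and let $\md$ be its component in $\mg|_{[n-6,n+1]}$. By hypothesis, $\md\cong \msg_\nu$ for a unique strict partition $\nu$ of size at most $8$ via an isomorphism $\psi$. By Lemma~\ref{lem:properties hold for SDEGS}, the isomorphism from the analogous restriction $\msg_\mu|_{[n-6,n+1]}$ to a union of standard SDEGs is given by $n-6$ applications of the operator $\Delta$. Therefore the cell of the maximum value of $\psi(v_0)\in SST(\nu)$ is the jeu de taquin normalization of the cell that $n{+}1$ should occupy in $T_{v_0}\cup\{n{+}1\text{ at }c\}$. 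Running reverse jeu de taquin on $\psi(v_0)$ using the data of $T_{v_0}$ then produces a unique corner cell $c$ to adjoin to $\lambda$; set $\mu$ and $\phi$ accordingly on all of $V(\mc)$.

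Third, I would verify that $\phi$ is a well-defined signature-preserving isomorphism, and also check claim (2). Edges $E_1,\dots,E_{n-3}$ and signatures $\sigma_2,\dots,\sigma_{n-1}$ are preserved by $\phi_0$ automatically, and $\sigma_1,\sigma_{n+1}$ are always $-$; the only nontrivial point is $\sigma_n$. At $v_0$ it matches by construction. For a general $v$, I connect $v_0$ to $v$ by a path of $E_i$-edges with $i\le n-3$ in $\mc$ and argue edge by edge: for $i\le n-5$, Lemma~\ref{lem: sig window} gives that $\sigma_n$ is constant across the edge in $\mg$, and $h_i$ in $\msg_\mu$ only permutes values in $\{i,\dots,i+3\}\subseteq\{1,\dots,n-1\}$ and hence fixes the cell of $n$ and the peak value at position $n$; for $i\in\{n-6,n-5,n-4,n-3\}$, the edge lies in the SDEG $\mg|_{[n-6,n+1]}$, so via $\psi$ it corresponds to the action of $h_i$ on $\msg_\nu$, which by the $\Delta$-factoring from Lemma~\ref{lem:properties hold for SDEGS} agrees with $h_i$ acting on $\phi(v)$. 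Uniqueness of $\mu$ then follows since any valid choice must restrict to $\lambda$ by Lemma~\ref{lem:triv automorphism} and must realize the $\sigma_n$-values forced at $v_0$. Claim (2) reduces to the same local picture: an $(n{-}2)$-edge of $\mg$ at $v$ lies in $\mg|_{[n-6,n+1]}$ and corresponds under $\psi$ to an edge labeled $5$ in $\msg_\nu$, which is exactly the condition characterizing $(n{-}2)$-neighbors of $\phi(v)$ in $\msg_\mu$. The main obstacle is the edge-by-edge consistency check in step three; the stronger hypothesis on $\mg|_{[n-6,n+1]}$ is precisely what rescues the argument there, since it controls all edges whose labels could interact with the position of $n{+}1$.
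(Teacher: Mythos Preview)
Your approach is genuinely different from the paper's, and step~3 contains a real gap. The paper never invokes reverse jeu de taquin or $\Delta$ here. Having labeled $V(\mc)$ by $SST(\lambda)$ via $\phi_0$, it partitions the vertices into sets $D_{ij}$ according to the rows occupied by $n{-}1$ and $n$, notes that each $D_{ij}$ is connected by edges in $E_1\cup\dots\cup E_{n-5}$, and uses Lemma~\ref{lem: sig window} to conclude $\sigma_n$ is constant on each $D_{ij}$. The heart of the proof is then a case analysis producing a unique \emph{row} $m$ such that placing $n{+}1$ in row~$m$ is simultaneously consistent with the value of $\sigma_n$ on every $D_{ij}$. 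The delicate part is comparing $\sigma_n$ on neighboring $D_{ij}$ and $D_{ij'}$; the paper does this via the $(n{-}4)$-edges joining them (Lemma~\ref{ax6}) together with the degree-$6$ picture (Figure~\ref{fig: proto SDEG}), which says exactly when such an edge is part of a triple with an $(n{-}2)$-edge and what that forces about $\sigma_n$ on either side.

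Your step~2 can be made rigorous (reverse shifted jeu de taquin against the inner tableau $T_{v_0}|_{[1,n-7]}$ does determine a unique outer cell), so that is not the issue. The problem is the propagation in step~3 across edges with $i\in\{n{-}4,n{-}3\}$. You write that ``via $\psi$ it corresponds to the action of $h_i$ on $\msg_\nu$, which by the $\Delta$-factoring \dots\ agrees with $h_i$ acting on $\phi(v)$.'' This presupposes $\psi(v)=\Delta^{n-7}(\phi(v))$ at the vertex $v$ in question. That identity holds at $v_0$ by construction and propagates along $E_{n-6},\dots,E_{n-3}$ within the single $\mg|_{[n-6,n+1]}$-component $\md$ of $v_0$. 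But once your path from $v_0$ uses an edge $E_i$ with $i\le n{-}7$, it may enter a different $[n-6,n+1]$-component $\md'$ with its own isomorphism $\psi'$, and nothing you have argued forces $\psi'(v)=\Delta^{n-7}(\phi(v))$ there: all you know is that the two tableaux agree on the first seven values and (by your induction) have the same $\sigma_7$, which does not pin down the cell of the value~$8$ when more than one addable corner is compatible with that single peak constraint. Since $\sigma_n$ genuinely changes along $E_{n-3}$-edges, and the way it changes depends on where $n{+}1$ sits, your edge-by-edge induction breaks exactly here. This is precisely the global consistency problem that the paper's $D_{ij}$/triple-edge analysis is designed to resolve; your sketch bypasses it rather than settling it.
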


We refer to $\msg_\mu$ in this lemma as the \emph{unique extension} of
$\mc$ in $\mg$.  The outline of this proof is based on the proof of
Theorem~3.14 in \cite{assaf2010dual}, but it uses peak sets in
addition to ascent/descent sets for tableaux.

\begin{proof}
By hypothesis, $\mc|_{[n]}$ is isomorphic to $\msg_\lambda$ for some
strict partition $\lambda \vdash n$, so we can bijectively label the
vertices of $\mc$ by standard shifted tableaux of shape $\lambda$ in a
way that naturally preserves the signature functions $\sigma_{i}$ for
all $1 \leq i<n$.  Since $\mg|_{[n-6,n+1]}$ is an SDEG, the lemma is
automatically true if $n\leq 7$, so assume $n>7$.

Partition the vertices of $\mc$ or equivalently $SST(\lambda)$
according to the placement of $n-1$ and $n$.  Let $D_{ij}$ be the
subgraph of $\mc$ with $n$ in row $i$ and $n-1$ in row $j$ with edges
in $E_1 \cup \ldots \cup E_{n-5}$, then each $D_{ij}$ is connected
since its restriction to $[n-2]$ is also isomorphic to a standard SDEG by
hypothesis.    Similarly, let $D_i$ be the connected subgraph of $\mc$
with vertex set labeled by tableaux with $n$ in row $i$ along with the
corresponding edges in $E_1 \cup \ldots \cup E_{n-4}$.

We first show that $\sigma_n$ is constant on $D_{ij}$. 
By Lemma~\ref{ax6}, each pair $ S,T \in V(D_{ij})$ 
may be connected by a path using only edges in $E_1\cup\dots\cup E_{n-5}$.
By Lemma~\ref{lem: sig window}, 
$\sigma_n$ is constant on $D_{ij}$.  The same fact 
need not hold for 
the $D_{i}$. 
%
%
We will show that there is a unique row of $\lambda$ 
where  $n+1$ can be placed  that is simultaneously consistent 
with the signatures for all vertices in all the $D_i$'s. 
 This placement must also be consistent with the
  existence of $n-2$-edges in $\mg$, completing the proof.

We proceed by partitioning the $D_{ij}$ for a fixed $i$ into three
types and describing how to extend each type consistently.  First,
suppose that there is some nonempty $D_{ij}$ with $i>j$. Then $n$ is
in a strictly higher row than $n-1$ in all the tableaux labeling
vertices of $D_{ij}$.  Furthermore 
there is some tableau $T$ labeling a vertex of
$D_{ij}$ such that $n-2$ lies in a row weakly above row $j$ making
position $n-1$ a peak.  This implies $\sigma_n(T)=-$ since peaks
cannot be adjacent.  Since $\sigma_n$ is constant on $D_{ij}$, we see
that $i>j$ implies $\sigma_n(T)=-$ for all tableaux labeling vertices
in $D_{ij}$. Furthermore, any placement of $n+1$ will be consistent
with the fact that $\sigma_n=-$.

Second, suppose that $i\leq j$ and that $\sigma_n(T)=+$ for 
all  $T
\in V(D_{ij})$.  
We would like to add
$n+1$ to a row strictly above $i$, but we must show this will be
consistent with each neighboring component $D_{ij'}$.  
By Lemma~\ref{ax6}, the component
$D_{ij}$ is connected to every other $D_{ij'}$ by an $(n-4)$-edge.  
Such an edge $e=(T,U) \in V(D_{ij}) \times
V(D_{ij'})$ could be part of a triple edge with an $(n-3),(n-2)$-edge.
In this case, we must have $\sigma_{[n-2,n]}(T)=+-+$ and
$\sigma_{[n-2,n]}(U)=-+-$, as demonstrated in Figure~\ref{fig: proto
SDEG}.  Thus, if $U$ is a vertex in $D_{ij'}$, then position $n-1$
must be a peak of $U$ and $i>j'$.  Therefore, if $n+1$ is added in any
row to a tableau $T \in V(D_{ij'})$ it will not create a peak in
position $n$.  On the other hand, if $D_{ij}$ is connected to another
nonempty $D_{ij'}$ by an $(n-4)$-edge that is not also a $(n-2)$
edge, then again by Figure~\ref{fig: proto SDEG} one observes that
$\sigma_n(U)=+$ for all $U \in V(D_{ij'})$.  Thus, we can consistently
extend each vertex in $D_{i}$ by placing $n+1$ in such a way that it
creates a descent from $n$ to $n+1$. Any row strictly above row $i$
will work provided it results in another shifted shape.  

Third, suppose that there exists a nonempty $D_{ij}$ such that $i\leq
j$ and $\sigma_n(T)=-$ for all $T \in V(D_{ij})$.  The component
$D_{ij}$ is connected to every other $D_{ij'}$ by an $(n-4)$-edge.
Assume such an edge $e=(T,U) \in V(D_{ij}) \times V(D_{ij'})$ is part
of a triple edge with an $(n-3),(n-2)$-edge.  In this case, we must
have $\sigma_{[n-2,n]}(T)=-+-$ since $\mg|_{[n-6,n+1]}$ is an SDEG.
Thus, $n-1$ is a peak of
$T$, but this contradicts the assumption that $i\leq j$.  
Therefore no
$(n-4)$-edge 
containing a vertex in $D_{ij}$ can be part of a triple
edge with an $(n-2)$-edge.  By observing Figure~\ref{fig: proto SDEG}
again, we conclude that $\sigma_n=-$ on all of $D_{ij'}$. In this
case, we can consistently extend all tableaux labeling vertices in
$D_{i}$ by placing $n+1$ in any row weakly lower than $n$.  


We complete the proof 
by placing $n+1$ 
in a unique row $m$ consistent with the required ascents and descents in all
$D_i$. 
Let $X$ be the union of all nonempty $D_i$ containing a vertex $T$
with some $\sigma_n(T)=+$, and let $Y$ be the union of all nonempty
$D_i$ with no vertex $T$ such that $\sigma_n(T)=+$.  
Every
vertex in $X$ needs a descent from $n$ to $n+1$, and every vertex in
$Y$ needs an ascent from $n$ to $n+1$. 
To do this, 
let $m$ be the minimal positive integer such that
$i<m$ for all $D_i \in X$.
 We will show that
 $Y$ consists of all $D_i$ with $i\geq m$.

%
%


If $X$ is empty, then we may let $m=1$ and extend $\lambda$ to a
strict partition $\mu$ by adding one box to the first row of
$\lambda$.  Then $\mc|_{[n]}$ is isomorphic to the component of
$\msg_\mu|_{[n]}$ with $n+1$ fixed in the first row and the
conclusions of the lemma hold.   

Assume $X$ is nonempty 
and that $i<j$ exist such that $D_i$ and $D_{j}$ are nonempty with $D_{i} \in Y$.  Since $\mc$
is connected, there exists an $(n-3)$-edge $e=(S,T)$ with $S \in D_i$
and $T\in D_{j}$.  
By the definition of shifted dual equivalence moves on
$\msg_\lambda|_{[n]}$, $e$ must be an
$(n-3)$-edge that acts as the transposition $t_{n,n-1}$ on $S$ and $T$.
This implies $S$ 
has $n-1$ in row
$j$ and $n$ in row $i$.  In this configuration, $n-1$ cannot be the
position of a peak in $S$. Thus $S$ must have a peak in position $n-2$
since it is a vertex of an $(n-3)$-edge.
If $\sigma_n(S)=+$, then it would contradict the hypothesis that
$D_{i} \in Y$.  Therefore $\sigma_{[n-2,n]}(S)=+--$, which implies
$\sigma_{[n-2,n]}(T)=-+-$. 
  In particular, $S$ and $T$ are not connected by an
$(n-2)$-edge.  We then conclude that $T$ must have an ascent from
$n$ to $n+1$.  
Thus $D_j \in Y$ for all $j>i$. 

We conclude that $Y$ consists of all the $D_{i}$ for all $i \geq m$
and $X$ consists of all $D_{i}$ for $i<m$.  Hence, $n+1$ may placed in
row $m$, while no other choice of row can simultaneously satisfy the
required ascents and descents from $n$ to $n+1$ in all $D_{i,j}$,
completing the proof.
\end{proof}

We can also find a \textit{unique lower extension} of a component of
$\mg|_{[2,n+1]}$ provided similar conditions hold.  For the next
lemma, recall $\Delta$ from Definition~\ref{def:delta}.

\begin{lem}\label{lem: permuting ribbons}
Given two shifted standard tableau $T$ and $U$ of shifted shape
$\lambda\vdash n$, $T$ and $U$ are in the same component of
$\msg_\lambda|_{[2,n]}$ if and only if $\Delta(T)$ and $\Delta(U)$
have the same shape.
\end{lem}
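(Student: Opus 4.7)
\emph{Proof Plan.} The approach is to use $\Delta$ as a morphism-of-sorts that reduces the component problem in $\msg_\lambda|_{[2,n]}$ to the known connectivity of standard SDEGs of smaller size. Two facts drive the argument: a commutation identity
\[
\Delta \circ h_{i+1} \;=\; h_i \circ \Delta \qquad (i \geq 1),
\]
and the injectivity of $\Delta$ on $V(\msg_\lambda)$. For the commutation, note that every shifted standard tableau has its $1$ in the cell $(1,1)$ and that $h_{i+1}$ acts only on cells holding values in $[i+1,i+4]$, so removing the $1$-cell commutes with $h_{i+1}$; Lemma~\ref{lem: slides and h commute} then shows that the jeu de taquin slides used to define $\Delta$ commute with $h_{i+1}$ on the resulting shifted skew tableau. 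Subtracting $1$ from all entries shifts the active window from $[i+1,i+4]$ to $[i,i+3]$, thereby relabeling $h_{i+1}$ as $h_i$ on $\Delta(T)$.

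\emph{Forward direction.} The edges of $\msg_\lambda|_{[2,n]}$ correspond to the operators $h_j$ with $j \in [2,n-3]$. Hence if $T$ and $U$ lie in a common component of $\msg_\lambda|_{[2,n]}$, iterating the commutation identity produces a path of $h_{j-1}$-edges (with $j-1 \in [1,n-4]$) from $\Delta(T)$ to $\Delta(U)$; in particular $\Delta(T)$ and $\Delta(U)$ share a common shape.

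\emph{Backward direction.} Suppose $\Delta(T)$ and $\Delta(U)$ have the same shape $\mu$. By Theorem~\ref{thm: Haiman}, there is a sequence $h_{j_1},\dots,h_{j_k}$ with $j_l \in [n-4]$ connecting $\Delta(T)$ to $\Delta(U)$ inside $\msg_\mu$. The lifted sequence $h_{j_1+1},\dots,h_{j_k+1}$ then consists of edges of $\msg_\lambda|_{[2,n]}$ (each non-trivial by the commutation identity, since its $\Delta$-image is non-trivial), and after applying it to $T$ I obtain some $T_k \in V(\msg_\lambda)$ with $\Delta(T_k) = \Delta(U)$. Injectivity of $\Delta$ then forces $T_k = U$, so $T$ and $U$ lie in the same component of $\msg_\lambda|_{[2,n]}$.

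\emph{Main obstacle.} The principal subtle point is the injectivity of $\Delta$ on $V(\msg_\lambda)$: the value of $\Delta(T)$ determines both its shape $\mu$ and hence the corner $c \in \lambda \setminus \mu$ at which the forward jeu de taquin path terminated. Because forward and reverse shifted jeu de taquin are mutually inverse on shifted standard tableaux, performing reverse jeu de taquin from $c$ on $\Delta(T)+1$ (i.e., adding $1$ to every entry and placing $1$ in the eventual empty cell, which is necessarily $(1,1)$) reconstructs $T$ uniquely. This rigidity is the crux of the argument; every other step is bookkeeping with the commutation identity and an appeal to Theorem~\ref{thm: Haiman}.
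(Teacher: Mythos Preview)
Your proposal is correct and follows essentially the same approach as the paper: both arguments hinge on the commutation identity $\Delta\circ h_{i+1}=h_i\circ\Delta$ (derived from Lemma~\ref{lem: slides and h commute}) together with Theorem~\ref{thm: Haiman}. The paper compresses the backward direction into a single ``if and only if'' clause, whereas you spell out the lifting of a path and make explicit the injectivity of $\Delta$ (via reversibility of shifted jeu de taquin) that the paper leaves implicit; this is the same proof with more detail, not a different route.
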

\begin{proof}
By definition, $T$ and $U$ are in the same component of
$\msg_\lambda|_{[2,n]}$ if and only if they are related by a sequence
of shifted dual equivalence moves $h_{i}$ for $2\leq i\leq n-3$.
Lemma~\ref{lem: slides and h commute} implies that $\Delta\circ h_i = h_{i-1}\circ \Delta$,
so $T$ and $U$ are in the same
component if and only if $\Delta (T)$ and $\Delta(U)$ are related by a
sequence of shifted dual equivalence moves $h_{i}$ for $1\leq i\leq
n-4$.  
By Theorem~\ref{thm: Haiman}, $\Delta (T)$ and $\Delta(U)$ are
related by a sequence of shifted dual equivalence moves $h_{i}$ for
$1\leq i\leq n-4$ if and only if they have the same shape.
\end{proof}

\begin{lem}\label{lem: shift [2,n]}
 Let $\mg=(V,\sigma, E_{1} \cup \dots \cup E_{n-3})$ be a connected
signed colored graph of shifted degree $n$ satisfying the Commuting
Property such that $\mg|_{[n-1]}$ and $\mg|_{[2,n]}$ are SDEGs.  Let
$\mc$ be any component of $\mg|_{[n-1]}$, and let $\msg_\mu$ be the
unique extension of $\mc$ in $\mg$.  Let $v \in V(\mc)$, and let
$\mc'$ be the component of $v$ in $\mg|_{[2,n]}$.  Say $\msg_\lambda
\cong \mc'$.  If $v$ is mapped to $T \in SST(\mu)$ in $\msg_\mu$, then
$v$ is mapped to $\Delta (T)$ in $\msg_\lambda$.
\end{lem}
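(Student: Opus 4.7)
The plan is to compare the two tableau realizations of $v$ through a shared subcomponent and then invoke the uniqueness in Lemma~\ref{lem: unique extension} on both sides. Let $\mc^\cap$ denote the component of $v$ in the subgraph $(V,\sigma,E_2\cup\cdots\cup E_{n-4})$ of $\mg$. Since $\mc$ uses edges $E_1,\ldots,E_{n-4}$ and $\mc'$ uses edges $E_2,\ldots,E_{n-3}$, the subcomponent $\mc^\cap$ lies in both $\mc$ and $\mc'$ and will serve as the common ground through which the two isomorphisms are compared.

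First I would transport $\mc^\cap$ over to $\msg_\mu$ via $\phi$. Its image is $\mc^\cap_\mu$, the component of $T$ in $\msg_\mu$ using edges $E_2\cup\cdots\cup E_{n-4}$, and $\mc^\cap_\mu$ sits inside the full component $\mc^*_\mu$ of $T$ in $\msg_\mu|_{[2,n]}$. By Lemma~\ref{lem: permuting ribbons}, $\mc^*_\mu=\{S\in SST(\mu):\Delta(S)\text{ has the same shape as }\Delta(T)\}$; call this shape $\lambda^*$. By Lemma~\ref{lem: slides and h commute} (via the relation $\Delta\circ h_i=h_{i-1}\circ\Delta$), the jeu de taquin map $\Delta$ restricts to a signed colored graph isomorphism $\mc^*_\mu\to\msg_{\lambda^*}$ sending $T$ to $\Delta(T)$. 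This is the identification we want to recover on the $\mc'$ side.

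Next I would apply Lemma~\ref{lem: unique extension} twice, treating $\mc^\cap$ and $\mc^\cap_\mu$ as components of the $[n-2]$-restrictions of the ambient graphs $\mg|_{[2,n]}$ and $\msg_\mu|_{[2,n]}$ respectively, each of which has shifted degree $n-1$. The hypotheses hold on both sides: the Commuting Property is inherited from $\mg$ (and is automatic inside $\msg_\mu$), and the required SDEG sub-restrictions follow from Lemma~\ref{lem:properties hold for SDEGS} together with the assumption that $\mg|_{[2,n]}$ is an SDEG. The lemma then yields unique extensions: on one side $\mc'\cong\msg_\lambda$ (the component containing $\mc^\cap$), and on the other side $\mc^*_\mu\cong\msg_{\lambda^*}$ via $\Delta$.

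Finally I would use uniqueness to match the two extensions. The proof of Lemma~\ref{lem: unique extension} shows that its output is determined by the signatures on the subcomponent together with which of its vertices admit a top-index edge (here, an $E_{n-3}$-edge) in the ambient graph. The isomorphism $\phi\colon\mc\to\mc_\mu$ preserves signatures by definition, and property (2) of the original application of Lemma~\ref{lem: unique extension} that produced $\phi$ asserts that, for every vertex of $\mc$, existence of an $E_{n-3}$-neighbor in $\mg$ matches its existence in $\msg_\mu$. Restricting this correspondence to $\mc^\cap\subset\mc$, the determining data on both sides coincide, so by the uniqueness clause of Lemma~\ref{lem: unique extension} we conclude $\lambda=\lambda^*$ and the two extensions are compatibly identified via $\phi$ on $\mc^\cap$; tracing $v$ through the identification then yields that the image of $v$ in $\msg_\lambda$ equals $\Delta(T)$. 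The main obstacle, and the step most deserving of care, is verifying that the $E_{n-3}$-edge existence data furnished by property (2) of $\phi$ is precisely what is needed to pin down both extensions simultaneously after further restriction from $\mc$ down to $\mc^\cap$.
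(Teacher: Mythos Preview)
Your proof is correct and follows the same route the paper sketches (it simply cites Lemma~\ref{lem: unique extension} and Lemma~\ref{lem: permuting ribbons}); you have supplied the details the paper omits. Your closing worry is unnecessary: the unique extension in Lemma~\ref{lem: unique extension} is determined by the full signature on the subcomponent alone (the triple-edge analysis in that lemma's proof is itself read off from signatures via the local SDEG hypothesis), and since $\phi$ preserves the entire degree-$n$ signature, the two extensions already agree without separately invoking the $E_{n-3}$-edge data from property~(2).
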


\begin{proof}
The proof follows from Lemma~\ref{lem: unique extension} and
Lemma~\ref{lem: permuting ribbons}.
\end{proof}

\begin{lem}\label{lem: correct components}
 Let $\mg=(V,\sigma, E_{1} \cup \dots \cup E_{n-3})$ be a connected
signed colored graph of shifted degree $n>9$ satisfying the Commuting Property such that $\mg|_{[n-1]}$ and
$\mg|_{[2,n]}$ are SDEGs. 
Let $\mc$ be any component of $\mg^{[n-1]}$, and let $\msg_\mu$ be the
unique extension of $\mc$ in $\mg$.
\begin{enumerate}
\item An $(n-3)$-edge connects two vertices in $\mc$ if and only if the
corresponding vertices are connected by an $(n-3)$-edge in
$\msg_\mu$.
\item If two $(n-3)$-edges connect the image of $\mc$ to the same component
in $\msg_\mu^{[n-1]}$, then corresponding edges in $\mg$ must also
connect $\mc$ to the same component in $\mg^{[n-1]}$.
\end{enumerate}

%
%
\end{lem}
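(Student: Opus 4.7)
My plan is to leverage the unique extension isomorphism $\phi\colon \mc\to\phi(\mc)\subset\msg_\mu$ from Lemma~\ref{lem: unique extension}, combined with the SDEG structure on $\mg|_{[2,n]}$, using Lemma~\ref{lem: shift [2,n]} as the bridge between the two.  For each $v\in V(\mc)$, write $\mc^*_v$ for the component of $v$ in $\mg|_{[2,n]}$, which by Lemma~\ref{lem: shift [2,n]} is isomorphic to the standard SDEG $\msg_{\lambda_v}$, where $\lambda_v$ is the shape of $\Delta(\phi(v))$, via an isomorphism sending $v\mapsto \Delta(\phi(v))$.  The content of Lemma~\ref{lem: slides and h commute} that jeu de taquin slides commute with the $h_i$'s translates, after the index shift built into $\Delta$, into the identity $\Delta\circ h_i=h_{i-1}\circ\Delta$ for $i\geq 2$.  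A second ingredient is the injectivity of $\Delta$ on $SST(\mu)$: given $\mu$, the shape of $\Delta(T)$ identifies the unique corner cell into which the $1$ slid, and reverse jeu de taquin from that cell followed by reinserting $1$ at $(1,1)$ recovers $T$.

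For the forward direction of part (1), suppose $(v,w)\in E_{n-3}$ in $\mg$ with both $v,w\in V(\mc)$.  This is an $(n-4)$-edge in $\mg|_{[2,n]}$, so $v$ and $w$ share a component $\mc^*\cong\msg_{\lambda_v}$ in which their images $\Delta(\phi(v))$ and $\Delta(\phi(w))$ are related by $h_{n-4}$.  The commutation yields $\Delta(\phi(w))=h_{n-4}(\Delta(\phi(v)))=\Delta(h_{n-3}(\phi(v)))$, and injectivity of $\Delta$ on $SST(\mu)$ gives $\phi(w)=h_{n-3}(\phi(v))$, which is precisely the desired $(n-3)$-edge in $\msg_\mu$.

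For the backward direction of part (1), suppose $\phi(w)=h_{n-3}(\phi(v))$ for some $v,w\in V(\mc)$.  The SDEG structure on $\mc^*_v$ produces a unique $u\in V(\mc^*_v)$ with $(v,u)\in E_{n-3}$ in $\mg$ whose image in $\msg_{\lambda_v}$ is $h_{n-4}(\Delta(\phi(v)))=\Delta(\phi(w))$.  If $u\in V(\mc)$, the forward direction then forces $\phi(u)=h_{n-3}(\phi(v))=\phi(w)$, hence $u=w$.  Part (2) will follow from a parallel argument together with the fact that within a common component of $\msg_\mu^{[n-1]}$ any two images can be joined by an $h_1,\dots,h_{n-4}$-path, which by iterating Lemma~\ref{lem: unique extension} at each far endpoint lifts to a path in $\mg^{[n-1]}$ connecting $u_1$ to $u_2$.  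The principal obstacle in both parts is the same: a priori, $u$ could lie in some $\mc_u\neq \mc$ component of $\mg^{[n-1]}$ whose unique extension happens also to be $\msg_\mu$, producing a spurious parallel copy.  I would close this gap by combining Lemma~\ref{lem: sig window} (forcing $\sigma_k(u)=\sigma_k(v)$ for $k\leq n-5$ and $\sigma_{[n-2,n-1]}(u)=-\sigma_{[n-2,n-1]}(v)$) with the Locally Standard Property on $\mg$ inherited from the SDEG hypotheses on $\mg|_{[n-1]}$ and $\mg|_{[2,n]}$ (valid on all intervals of length at most $9$ since $n>9$), to pin down the unique extension of $\mc_u$ to coincide with $\msg_\mu$ in the configuration required by injectivity of $\Delta$, thereby forcing $u\in\mc$ and $u=w$.
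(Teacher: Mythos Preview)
Your forward direction of part~(1) is correct and coincides with the paper's argument: translate the $(n-3)$-edge to an $(n-4)$-edge in the SDEG $\mg|_{[2,n]}$, use Lemma~\ref{lem: shift [2,n]} to identify the images as $\Delta(S)$ and $\Delta(T)$, invoke $\Delta\circ h_{n-3}=h_{n-4}\circ\Delta$, and finish with injectivity of $\Delta$ on $SST(\mu)$.  The paper treats the converse of (1) as a direct reversal of this computation; your explicit worry that the $(n-3)$-neighbor $u$ of $v$ might fall outside $\mc$ is a real subtlety, but your proposed remedy (Lemma~\ref{lem: sig window} plus Locally Standard) only constrains the signature of $u$ and cannot by itself single out which $\mg^{[n-1]}$-component $u$ lies in.

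The substantive gap is in part~(2).  Your plan is to take an $h_1,\dots,h_{n-4}$-path in $\msg_\mu^{[n-1]}$ from $h_{n-3}(S)$ to $h_{n-3}(T)$ and lift it to $\mg^{[n-1]}$ via the unique extension of the component $\mc_{s'}$ containing $s'$.  This does not work.  The unique extension of $\mc_{s'}$ yields an isomorphism onto a component of some $\msg_{\mu'}^{[n-1]}$, but you have no a priori control over $\mu'$, nor over which tableau $s'$ maps to; knowing that $s'\mapsto h_{n-3}(S)$ is essentially the content of Lemma~\ref{lem: complete}, which is proved \emph{using} the present lemma.  Even granting that identification, the lifted path is confined to $\mc_{s'}$, so it cannot land at $t'$ unless you already know $\mc_{s'}=\mc_{t'}$, which is precisely the conclusion sought.

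The paper's route to (2) is entirely different and avoids ever identifying the image of $s'$ or $t'$.  It works inside $\mc$: since both $(n-3)$-edges act on $S$ and $T$ by swapping $n-1$ and $n$, the cells of $n-1$ and $n$ coincide in $S$ and $T$.  If $s$ and $t$ happen to be connected in $\mc$ by edges of color at most $n-7$, the Commuting Property transports the $(n-3)$-edges across and joins $s'$ to $t'$ directly.  Otherwise, one \emph{chooses new representatives}: rearrange the entries in $[1,n-2]$ to produce $S',T'$ (each in the same $\mg^{[n-4]}$-component as $S,T$, and still admitting the $(n{-}3)$-swap of $n-1$ and $n$) such that $\Delta(S')$ and $\Delta(T')$ have the same shape.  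By Lemma~\ref{lem: permuting ribbons} this places the corresponding vertices in the same component of $\mc^{[2,n-1]}$, and one closes via Lemma~\ref{lem: shift [2,n]}.  The existence of such $S',T'$ is a concrete jeu de taquin construction (route the $\Delta$-slide through a chosen northeast corner of $S|_{[n-2]}$), with a short case check when that restriction has few corners; this is exactly where $n>9$ is used.  Your proposal has no analogue of this representative-modification step, and that is the missing idea.
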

\begin{proof}
We begin by considering the case where $(u,v)\in E_{n-3}$ and $u,v \in
\mc$.  Since $\msg_{\mu}$ is the unique extension of $\mc$, we can
associate tableaux $S,T$ with $u,v$ respectively.  We want to show
$h_{n-3}(S)=T$.  By hypothesis $\mg|_{[2,n]}$ is an SDEG. The
component $\mc'$ of $\mg|_{[2,n]}$ containing $u$ is isomorphic to the
component of $S$ in $\msg_{\mu}|_{[2,n]}$ by Lemma~\ref{lem: unique
extension}.  The vertex $u$ maps to $\Delta(S)$ under this
isomorphism by Lemma~\ref{lem: shift [2,n]}.  Since $(u,v) \in E_{n-3}$, they are connected by an
$n-4$-edge in $\mc'$.  By Lemma~\ref{lem: shift [2,n]}, the image of
$v$ in $\msg_{\mu}|_{[2,n]}$ is $\Delta (T)$ and $h_{n-4}(\Delta (S))
=\Delta (T)$.  Therefore, $h_{n-3}(S) =T$ since every edge in
$\msg_{\mu}|_{[2,n]}$ comes from an edge in $\msg_{\mu}$ with one
higher label.

The previous argument is reversible. That is, given $S,T \in
SST(\mu)$ with $n$ in the same cell, if $h_{n-3}(S)=T$ then the
vertices $u,v$ in $\mc$ mapping to $S,T$ respectively must be
connected by an $(n-3)$-edge in $\mg$. This proves (1).

Next we prove (2).  By Lemma~\ref{lem: unique extension}, we can label
the vertices of $\mc$ by standard tableaux of shape $\mu$.  Let $s,t
\in \mc$ be labeled by the tableaux $S$ and $T$, respectively.
Assume that $(s,s'), (t,t') \in E_{n-3}$, but $s',t' \not \in \mc$.  
Further assume that both $S$ and $T$ are connected to the same 
component of $\msg_\mu^{[1,n-1]}$ by $(n-3)$-edges, and that this
component is distinct from the component of $T$.  Then, $n-1$ and $n$
must be in the same cells of $S$ and $T$, with $n-2$ in some cell
between the two in row reading order, and $n-3$ in some cell before
that, by the definition of $h_{n-3}$. 

If $S$ and $T$ are connected via edges in $E_1 \cup \ldots \cup
E_{n-7}$ then the lemma holds since each of these edges commutes with
edges in $E_{n-4}$.  If $S$ and $T$ are connected via edges labeled
$2, 3, \ldots, n-4$, then we can assume $s$ and $t$ are also connected
by edges in $E_2 \cup \ldots \cup E_{n-4}$ by Lemma~\ref{lem: shift
[2,n]}.  It thus suffices to show that some $S'$ in the same
$\mg^{[n-4]}$-component as $S$ and some $T'$ in the same
$\mg^{[n-4]}$-component as $T$ exist and satisfy the following
properties: both $S'$ and $T'$ admit $(n-3)$-edges that interchange
$n-1$ and $n$, and both are in the same component of
$\mc^{[2,n-1]}$. By Lemma~\ref{lem: permuting ribbons}, it suffices to
find $S'$ and $T'$ such that $\Delta(S'),\Delta(T')$ have the same
shape.

If $S|_{[n-2]}$ contains $i<n-3$ in a northeast boundary cell $c$,
then we can rearrange the entries of $S$ smaller than $i$ to get $S'$
so that the cell $c$ is moved in $\Delta(S')$ and the rest of the jeu
de taquin slides are independent of the filling. If $T|_{[n-2]}$ also
contains an entry $j<n-3$ in cell $c$, then rearrange the entries of
$T$ to agree with $S'$ in all cells weakly southwest of $c$ to obtain
$T'$.  Then the jeu de taquin process $\Delta (T')$ passes through $c$
and by construction $\Delta(S')$ and $\Delta(T')$ have the same shape
since $S$ and $T$ have the same shape and that $n-1$ and $n$ are in
the same cell in both.  Thus, $S'$ and $T'$ are connected by edges in
$E_2 \cup \ldots \cup E_{n-4}$ by Lemma~\ref{lem: permuting ribbons}.
If the shape of $S|_{[n-2]}$ has 5 or more northeast boundary cells,
then such a cell $c$ exists and the lemma holds.

There are only a finite number of strict partitions $\mu$ with at most
4 northeast boundary cells after removing 2 corner cells.  For
example, if $\mu$ has 6 or more rows or 9 or more columns then even
after removing 2 corner cells there must be at least 5 boundary cells
remaining.  We only need to consider such shapes with at least 10 cells by
hypothesis.  In each remaining case, one needs to check that no matter
how $n,n-1$ are placed in corner cells $\{c,d\}$ of $\mu$ the jeu
de taquin argument above may still be applied.  We leave the remaining
cases to the reader to check to complete the proof.
\end{proof}

\begin{figure}
\ytableausetup{aligntableaux=center, smalltableaux}
$S=$
\begin{ytableau}
\none&\none&8\\
\none&4&5\\
1&2&3&6&7&9
\end{ytableau}
\quad \quad
$T=$
\begin{ytableau}
\none&\none&8\\
\none&6&7\\
1&2&3&4&5&9
\end{ytableau}
\caption{An example from the proof of Lemma~\ref{lem: correct components} where $n=9$ and $\Delta(S)$ does not have the same shape as $\Delta(T)$.\label{fig: different delta}}
\end{figure}

\begin{lem}\label{lem: complete}
  Let $\mg=(V,\sigma,E_1 \cup \ldots \cup E_{n-3})$ be a connected signed colored graph
  of shifted degree $n> 9$ satisfying the Commuting Property such that 
  $\mg|_{[n-1]}$ is an SDEG, and $\mg|_{[2,n]}$ is an SDEG.
  Then there exists a morphism $\phi\colon \mg \rightarrow
  \msg_\lambda$ for some strict partition $\lambda \vdash n$.
\end{lem}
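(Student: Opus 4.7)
The plan is to assemble the morphism $\phi$ from the isomorphisms provided by Lemma~\ref{lem: unique extension} applied to each component of $\mg^{[n-1]}$, then propagate across $(n-3)$-edges to stitch these into a single map. First, I will verify the hypotheses of Lemma~\ref{lem: unique extension}: $\mg|_{[n-1]}$ is an SDEG by assumption, and because $n > 9$ we have $[n-7,n] \subseteq [2,n]$, so $\mg|_{[n-7,n]}$ is a restriction of the SDEG $\mg|_{[2,n]}$ and hence is itself an SDEG by Lemma~\ref{lem:properties hold for SDEGS}. Applying Lemma~\ref{lem: unique extension} to each component $\mc$ of $\mg^{[n-1]}$, I obtain a unique strict partition $\lambda(\mc) \vdash n$ and an isomorphism $\phi_\mc \colon \mc \to$ a component of $\msg_{\lambda(\mc)}^{[n-1]}$, with the further property that $v \in V(\mc)$ admits an $(n-3)$-neighbor in $\mg$ iff $\phi_\mc(v)$ admits one in $\msg_{\lambda(\mc)}$.

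Next, I will fix a distinguished component $\mc_0$, set $\lambda := \lambda(\mc_0)$, and initialize $\phi := \phi_{\mc_0}$ on $V(\mc_0)$. Since $\mg$ is connected, every other component $\mc'$ of $\mg^{[n-1]}$ can be reached from $\mc_0$ by a sequence of $(n-3)$-edges, and I will extend $\phi$ inductively along such sequences. The propagation rule is this: given an edge $(u,v) \in E_{n-3}$ with $\phi(u)$ already defined and $v$ in a component $\mc'$ not yet handled, Lemma~\ref{lem: unique extension}(2) furnishes a unique $(n-3)$-neighbor $T \in SST(\lambda)$ of $\phi(u)$; declare $\phi(v) := T$ and extend $\phi$ over $V(\mc')$ using the unique isomorphism (unique by Lemma~\ref{lem:triv automorphism}) from $\mc'$ to the component of $\msg_\lambda^{[n-1]}$ containing $T$ that sends $v \mapsto T$. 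Once such an isomorphism $\mc' \to$ component of $\msg_\lambda^{[n-1]}$ has been exhibited, the uniqueness clause of Lemma~\ref{lem: unique extension}(1) forces $\lambda(\mc') = \lambda$, so a common partition $\lambda$ will serve the entire graph.

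The main obstacle will be showing that this propagation is well-defined, i.e., that the image assigned to each $\mc'$ is independent of the $(n-3)$-path used to reach it, and equivalently that $\phi$ preserves every $(n-3)$-edge of $\mg$. This is where Lemma~\ref{lem: correct components} is essential. Part (2) of that lemma ensures that multiple $(n-3)$-edges connecting a common pair of $\mg^{[n-1]}$-components $\mc, \mc'$ must correspond to $(n-3)$-edges landing in a single component of $\msg_\lambda^{[n-1]}$, so the target component is forced; combined with the rigidity of standard SDEG components from Lemma~\ref{lem:triv automorphism}, this makes the extension over $\mc'$ unique and hence independent of the $(n-3)$-edge chosen. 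Part (1) of the same lemma simultaneously handles any $(n-3)$-edge with both endpoints in a single $\mc$, showing that such edges are preserved by $\phi|_\mc$.

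Finally, to confirm that the resulting $\phi \colon V \to SST(\lambda)$ is a morphism of signed colored graphs, I will check its three requirements in turn: signature preservation holds because each $\phi_\mc$ is signature-preserving and Lemma~\ref{lem: sig window} transports this compatibility across each $(n-3)$-edge; preservation of $i$-edges for $i \leq n-4$ is immediate from each $\phi|_\mc$ being a signed colored graph isomorphism onto its image; and preservation of $(n-3)$-edges is built directly into the propagation rule, with consistency on non-tree $(n-3)$-edges following from Lemma~\ref{lem: correct components} as in the previous paragraph. This will yield the required morphism $\phi \colon \mg \to \msg_\lambda$.
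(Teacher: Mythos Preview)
Your overall strategy matches the paper's: label each component $\mc$ of $\mg^{[n-1]}$ by tableaux via the unique extension of Lemma~\ref{lem: unique extension}, then show these labelings cohere across $(n-3)$-edges. But you have skipped the step that carries all the content.

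The gap is in your propagation rule. You write: ``extend $\phi$ over $V(\mc')$ using the unique isomorphism \ldots\ from $\mc'$ to the component of $\msg_\lambda^{[n-1]}$ containing $T$ that sends $v \mapsto T$.'' By Lemma~\ref{lem:triv automorphism}, if an isomorphism from $\mc'$ to that component exists at all, it is \emph{unique} --- you cannot impose the condition $v \mapsto T$ on it; you must \emph{prove} that the unique isomorphism already sends $v$ to $T$. Equivalently, writing $U := \phi_{\mc'}(v)$ for the image of $v$ under the unique-extension isomorphism of $\mc'$, you must show $U = h_{n-3}(\phi_\mc(u))$. This equality is exactly what the paper spends the entire proof establishing: it uses the hypothesis that $\mg|_{[2,n]}$ is an SDEG (via Lemma~\ref{lem: shift [2,n]}) to deduce $\Delta(U) = \Delta(h_{n-3}(\phi_\mc(u)))$, and then runs a careful jeu de taquin argument with case analysis on the boundary shape of $T|_{[n-4]}$ to upgrade this to $U = h_{n-3}(\phi_\mc(u))$. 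Your proposal invokes $\mg|_{[2,n]}$ only to verify the hypotheses of Lemma~\ref{lem: unique extension}, never to constrain $U$; without that constraint there is no reason the extension over $\mc'$ should line up with the target $T$.

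There is a second, related problem: your appeal to Lemma~\ref{lem: correct components}(2) runs in the wrong direction. That lemma says that if two $(n-3)$-edges from the image of $\mc$ land in the same component of $\msg_\mu^{[n-1]}$, then the corresponding edges in $\mg$ land in the same component of $\mg^{[n-1]}$. You need the converse --- that two $(n-3)$-edges from $\mc$ to a common $\mc'$ in $\mg$ force a common target component in $\msg_\lambda$ --- which is not what the lemma provides. Once the missing identity $\phi_{\mc'}(v) = h_{n-3}(\phi_\mc(u))$ is proved for every crossing edge, well-definedness follows for free (since $\phi|_{\mc'}$ is then just $\phi_{\mc'}$, independent of path), but you cannot get there by Lemma~\ref{lem: correct components} alone.
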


\begin{proof}

  Let $t$ and $u$ be distinct vertices of $\mg$ that are connected by
  an $(n-3)$-edge.  Let $\mc$ and $\md$ be the components in
  $\mg^{[n-1]}$ of these two vertices, with unique extensions
  $\msg_\lambda$ and $\msg_\mu$. Label $t$ and $u$ with $T$ and $U$,
  their tableaux in $\msg_\lambda$ and $\msg_\mu$, respectively.  It
  suffices to show that $h_{n-3}(T)=U$. In particular, this would
  guarantee that $\lambda=\mu$, and that there is a morphism from
  $\mg$ to $\msg_\lambda$.
  
  We first show that we can make three simplifying
  observations.  If $T$ and $h_{n-3}(T)$ are in the same
  component of $\msg_\lambda^{[n-1]}$, then the lemma follows from
  Lemmas~\ref{lem: correct components} and \ref{lem:triv
    automorphism}.  Now assume that $T$ and $h_{n-3}(T)$ are in different
  components of $\msg_\lambda^{[n-1]}$. By symmetry, we may also
  assume that $U$ and $h_{n-3}(U)$ are in different components of
  $\msg_\mu^{[n-1]}$.  Thus, we can assume $h_{n-3}$ acts on both $T$
  and $U$ by switching $n-1$ and $n$.

Secondly, applying Lemma~\ref{lem:triv automorphism}, the Commuting Property and
the hypothesis that $\mg|_{[n-1]}$ is an SDEG, it follows that
$T|_{[n-4]}=U|_{[n-4]}$.  We thus only need to show that
$T|_{[n-3,n]}=h_{n-3}(U)|_{[n-3,n]}$.  

For the third observation, note that the component of $T$ in
$\mg|_{[2,n]}$ is isomorphic to the component of the image of $T$ in
$\msg_\mu|_{[2,n]}$. This follows from the fact that the component of
$T$ in $\msg_\lambda|_{[2,n]}$ satisfies the definition of the unique
extension of the component of $T$ in $\mg|_{[2,n-1]}$ in
$\mg|_{[2,n]}$. Because the component of $T$ in $\mg|_{[2,n]}$ is an
SDEG, it follows from Lemma~\ref{lem:triv automorphism} that this
component is isomorphic to the component of $T$ in
$\msg_\lambda|_{[2,n]}$.  Furthermore, $T$ is taken to $\Delta(T)$ via
this isomorphism. Similarly, there is another isomorphism on the
component of $U$ in $\msg^{[2,n]}$ taking $U$ to $\Delta(U)$.
Applying Lemma~\ref{lem:triv automorphism} and the fact that $T,U$ are
in the same component of $\msg^{[2,n]}$, we see
$\Delta(T)=h_{n-4}\circ \Delta(U)$.


The next step is to replace the original pair of vertices connecting
$\mc$ and $\md$ by another such pair for which we can determine the
shape of $T$ and $U$ from $\Delta(T)$ and $\Delta(U)$ via jeu de
taquin more explicitly.  By Lemma~\ref{lem: correct components}, we can
consider any $T' \in V(\mc)$ that results from moving the values
$[1,n-2]$ in $T$ such that $h_{n-3}$ acts on $T'$ by switching $n-1$
and $n$. For each such $T'$, let $U'$ be the tableau representing a
vertex in $\md$ such that $(T',U')\in E_{n-3}$.  It suffices to show
that $h_{n-3}(T')=U'$ for any pair $(T',U')$  assuming that 

\begin{enumerate}
\item $T'|_{[n-4]}=U'|_{[n-4]}$.
\item $\Delta(T')=h_{n-4}(\Delta(U'))$.
\item $h_{n-3}$ acts on $T'$ and $U'$ by switching $n-1$ and $n$.
\end{enumerate}

We proceed by considering cases depending on the shape of
$T|_{[n-4]}$.  First, consider the case where $T|_{[n-4]}=U|_{[n-4]}$
has at least two northeast corners $c_1$ and $c_2$.  Assume $c_1$ is
in a higher row than $c_2$.  By rearranging the values in $[n-4]$, we
may then find $T'_1$ and $T'_2$ in $V(\mc)$ satisfying the three
assumptions above such that applying $\Delta$ to each proceeds through
$c_1$ and $c_2$, respectively.  In the jeu de taquin process on $T_1$,
all rows strictly below $c_1$ are fixed once the slide reaches $c_1$.
Similarly, all columns strictly to the left of $c_2$ are fixed once
the slide reaches $c_2$.  These two regions cover the entire shape of
$T|_{[n-4]}$, but it might not cover the whole shape of $T$.  By
assumption (3), $n-1,n$ must be in different corners of $T$.  Thus, it
can be observed that $T|_{[n-3,n]}=T_1|_{[n-3,n]}=T_2|_{[n-3,n]}$ is
completely determined by their placement in $\Delta(T_1)$ and
$\Delta(T_2)$.  Similarly, $h_{n-3}(U)$ satisfies the same assumptions
as $T$, which was uniquely determined, so $T=h_{n-3}(U)$.

Assume next that $T|_{[n-4]}$ has exactly one northeast corner $c$.
In particular, the jeu de taquin process of applying $\Delta$ to $T$
must proceed through this corner.  If $c$ is also a corner of both
$\lambda$ and $\mu$, then the jeu de taquin process does not affect the
cells containing $[n-3,n]$ in either $T$ or $U$ so
$\Delta(T)=h_{n-4}(\Delta(U))$ implies $T=h_{n-3}(U)$.

Say $c$ is in row $i$, column $j$ of $T$.  If $i>3 $ or $j-i>3$, then
$c$ is on the northeast boundary of both $T$ and
$U$.  Here we have 
used the fact that $h_{n-3}$ swaps $n-1$ and $n$ in $T$ and $U$ to ensure that 
the values in $[n-3,n]$ are not in a single row or column. Now consider the 
jeu de taquin process for $\Delta$, which must proceed through $c$. Since $c$ is
on the northeast boundary, all remaining slides are either all to the
left or all down depending only on whether or not $c$ is a northern
boundary cell or an eastern boundary cell, respectively.  Thus,
$\Delta(T)$ determines $T|_{[n-3,n]}$ and $\Delta(U)$
determines $U|_{[n-3,n]}$ where $U$.  Hence, $T=h_{n-3}(U)$.  

There are a finite number of standard shifted tableaux $T$ satisfying
the assumptions such that $T|_{[n-4]}$ has a unique corner cell in
position $(j,i)$ such that $i\leq 3$ and $j-i\leq 3$.  We leave it to the reader to
check in each case that if $T$ and $U$ exist satisfying the three
assumptions plus they have at least 9 cells, then by rearranging the
values $[n-2]$ one can find $T$ and $U$ of the same shape and
satisfying the same assumptions such that $T|_{[n-3,n]}$ and
$U|_{[n-3,n]}$ are completely determined by those assumptions and
$T=h_{n-3}(U)$.  For example, in Figure~\ref{fig: eleven cells A} we
show two possible tableaux $S$ and $T$ with different shapes such that
$\Delta(S)=\Delta(T)$.  We also show two tableaux $S'$ and $T'$ that
also satisfy the three assumptions, have the same shapes as $S$ and
$T$ respectively, but the last jeu de taquin slide ends in a different
corner.  Therefore, $S_{[8,11]}$ can be recovered from knowing both
$\Delta (S)$ and $\Delta (S')$, and similarly for $T_{[8,11]}$.
\end{proof}

\begin{figure}[h]
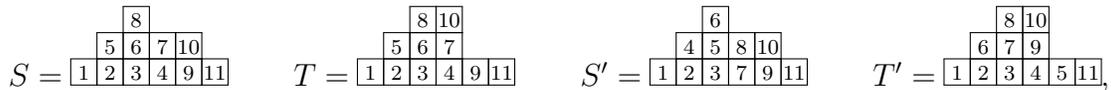

 \ytableausetup{aligntableaux=bottom}
$S=\begin{ytableau}
\none&\none&8\\
\none&5&6&7&10\\
1&2&3&4&9&11
\end{ytableau}   \hspace{.3in}
\; 
T=\begin{ytableau}
\none&\none&8&10\\
\none&5&6&7\\
1&2&3&4&9&11
\end{ytableau}
   \hspace{.3in}
\; 
S'=\begin{ytableau}
\none&\none&6\\
\none&4&5&8&10\\
1&2&3&7&9&11
\end{ytableau}   \hspace{.3in}
\; 
T'=\begin{ytableau}
\none&\none&8&10\\
\none&6&7&9\\
1&2&3&4&5&11
\end{ytableau}$,  
\caption{Example of verification left to the reader.\label{fig: eleven cells A}}
\end{figure}



\begin{remark}
For $n=8$, we may not be able to uniquely determine $U$ in the
proof above.  See Figure~\ref{fig: nine cells}.
\end{remark}

\begin{figure}[h]
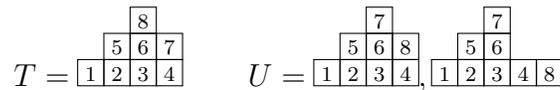

 \ytableausetup{aligntableaux=bottom}
$T=
\begin{ytableau}
\none&\none&8\\
\none&5&6&7\\
1&2&3&4
\end{ytableau}   \hspace{.3in}
\; U=
\begin{ytableau}
\none&\none&7\\
\none&5&6&8\\
1&2&3&4
\end{ytableau},  
\begin{ytableau}
\none&\none&7\\
\none&5&6\\
1&2&3&4&8
\end{ytableau}$
\caption{An example with $n=8$ and two distinct possibilities of $U$ from the proof of Lemma~\ref{lem: complete}.\label{fig: nine cells}}
\end{figure}

The following lemma is equivalent to Axiom 6 in Assaf's rules for dual equivalence graphs.

\begin{lem}\label{lem: axiom6}
  Let $\mg=(V,\sigma,E_1 \cup \ldots \cup E_{n-3})$ be a connected
signed colored graph of shifted degree $n> 9$ satisfying the Commuting Property
 such that $\mg|_{[n-1]}$ is an SDEG, and $\mg|_{[2,n]}$ is an SDEG.
Then each pair of distinct components of $\mg^{[n-1]}$ is connected by
an $(n-3)$-edge.
\end{lem}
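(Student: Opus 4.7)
The plan is to apply Lemma~\ref{lem: complete} to produce a morphism $\phi\colon\mg\to\msg_\lambda$ for some strict partition $\lambda\vdash n$, invoke Lemma~\ref{ax6} inside the standard SDEG $\msg_\lambda$, and pull the resulting $(n-3)$-edge back to $\mg$ via Lemmas~\ref{lem: unique extension} and~\ref{lem: correct components}.

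Fix two distinct components $\mc_1,\mc_2$ of $\mg^{[n-1]}$. Since $\phi$ preserves edge labels and each $\mc_i$ is connected in $\mg^{[n-1]}$, the image $\phi(\mc_i)$ lies in a unique component $\mc_i'$ of $\msg_\lambda^{[n-1]}$. The first technical step is to show that $\phi|_{\mc_i}$ is an isomorphism onto $\mc_i'$. Applying Lemma~\ref{lem: unique extension} to $\mc_i$ produces a unique strict partition $\mu_i$ and a unique isomorphism $\psi_i\colon \mc_i \to$ component of $\msg_{\mu_i}^{[n-1]}$. The composition $\phi|_{\mc_i}\circ \psi_i^{-1}$ is a morphism between components of standard shifted dual equivalence graphs; by the uniqueness clause of Lemma~\ref{lem: unique extension} together with Lemma~\ref{lem:triv automorphism}, this forces $\mu_i=\lambda$ and identifies $\phi|_{\mc_i}$ with $\psi_i$.

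The key remaining step is to verify $\mc_1'\ne\mc_2'$; this is the main obstacle. If $\mc_1'=\mc_2'$, then $\phi$ would collapse two distinct components of $\mg^{[n-1]}$ onto the same component of $\msg_\lambda^{[n-1]}$. Since $\mg$ is connected, there must exist a path in $\mg$ joining $\mc_1$ to $\mc_2$ which traverses at least one $(n-3)$-edge; applying Lemma~\ref{lem: correct components}(2) together with the $(n-3)$-edge structure inside $\msg_\lambda$ supplied by Lemma~\ref{ax6} forces $\mc_1$ and $\mc_2$ to coincide in $\mg^{[n-1]}$, a contradiction. Once $\mc_1'\ne\mc_2'$ is established, Lemma~\ref{ax6} applied inside $\msg_\lambda$ furnishes an $(n-3)$-edge $(T_1,T_2)$ with $T_i\in\mc_i'$; setting $v_i := (\phi|_{\mc_i})^{-1}(T_i)$ and using Lemma~\ref{lem: unique extension}(2) to lift the $(n-3)$-neighbor relation along with Lemma~\ref{lem: correct components}(2) to ensure that this neighbor lands in $\mc_2$, one concludes $(v_1,v_2)\in E_{n-3}$ in $\mg$, as required.
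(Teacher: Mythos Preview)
Your approach has a genuine gap at the ``key remaining step.'' You correctly observe that Lemma~\ref{lem: correct components}(2) together with Lemma~\ref{lem: unique extension}(2) makes the induced map from the set of components of $\mg^{[n-1]}$ to that of $\msg_\lambda^{[n-1]}$ behave like a graph covering. But knowing that the target is a complete graph (by Lemma~\ref{ax6}) does not force this covering to be injective: for $m\geq 3$ the complete graph $K_m$ admits nontrivial connected covers, for instance the $6$-cycle double-covering $K_3$. Your one-sentence appeal to a path from $\mc_1$ to $\mc_2$ does not close this gap; a closed walk downstairs need not lift to a closed walk upstairs. Indeed the paper's own Figure~\ref{fig: SDEG cover} exhibits, for $n=9$, precisely such a nontrivial double cover over $\msg_{(5,3,1)}$, so any valid argument at this point must genuinely exploit $n>9$, which yours does not. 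Your step~5 then inherits the same problem: knowing that $v_1$ has an $(n-3)$-neighbor $w$ with $\phi(w)\in\mc_2'$ does not place $w$ in $\mc_2$ unless you already know that distinct components of $\mg^{[n-1]}$ have distinct images.

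The paper avoids the covering obstruction entirely. Rather than building $\phi$ first and then attempting to prove it is injective on components (which is exactly what the paper later uses Lemma~\ref{lem: axiom6} to establish in the proof of Theorem~\ref{SDEG properties}, making your route effectively a reversal of that logic), it argues directly via Lemma~\ref{lem: permuting ribbons} and jeu de taquin: if a component $\ma$ is joined by $(n-3)$-edges to two others $\mb$ and $\mc$, then $\mb$ and $\mc$ are themselves joined by an $(n-3)$-edge, whence the relation is transitive and connectedness of $\mg$ finishes the proof. The hypothesis $n>9$ enters because this jeu de taquin argument fails for a finite list of small shifted shapes, the largest being the pyramid $(5,3,1)$.
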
 

This proof is similar to Theorem~3.17 in \cite[p.413]{roberts2013dual}
and Lemma~\ref{lem: complete} so we only sketch it here.

\begin{proof}
The statement in the lemma is equivalent to saying that if a
component $\ma$ is connected by $(n-3)$-edges to components $\mb$ and
$\mc$, then $\mb$ and $\mc$ are connected to each other by an
$(n-3)$-edge. Using Lemma~\ref{lem: permuting ribbons}, we may apply
properties of jeu de taquin to show that this must be the case so long
as $\lambda$ is not a pyramid or $\lambda$ has more than three
northeast corners. The largest example of a shifted shape that
violates these two rules is the pyramid $(5,3,1)$ with nine cells. By
assumption, $n>9$, and so the argument is complete.
\end{proof}

\begin{proof}[\bf{Proof of Theorem~\ref{SDEG properties}}]

The fact that $\msg_\lambda$ satisfies the Commuting Property and the
Locally Standard property is proved in Lemma~\ref{lem:properties hold
for SDEGS}.  To prove the converse, assume $\mg$ is a signed colored
graph with shifted degree $n$ satisfying both of these properties.
Proceed by induction on $n$. For $n\leq 9$, the result is known by the
Locally Standard Property. We may then assume $n>9$.

By Lemma~\ref{lem: complete}, $\mg$ admits a morphism onto
$\msg_\lambda$. By Lemma~\ref{lem: injective iso}, we need only show
that this morphism is injective.  The morphism was constructed in such
a way that it is the unique extension on any component of
$\mg^{[n-1]}$ so it is injective on each component automatically.
Furthermore, the location of $n$ is constant on each component.  Let
$C$ and $D$ be two distinct components of $\mg^{[n-1]}$, and let $v
\in V(C)$ and $w\in V(D)$.  By Lemma~\ref{lem: axiom6}, there exists
an $(n-3)$-edge connecting $C$ to $D$ which necessarily moves $n$ in
the tableaux labeling its endpoints under the morphism.  Thus, the
morphism maps $v$ and $w$ to tableaux with $n$ in two different
positions.  Hence, the morphism is injective.
\end{proof}

\begin{remark}
In Theorem~\ref{SDEG properties}, $n>9$ is a sharp bound. In fact, if we consider the $n=9$ case, then there exists an infinite family of such signed colored graphs that are not SDEGs, the smallest of which is represented in Figure~\ref{fig: SDEG cover}.
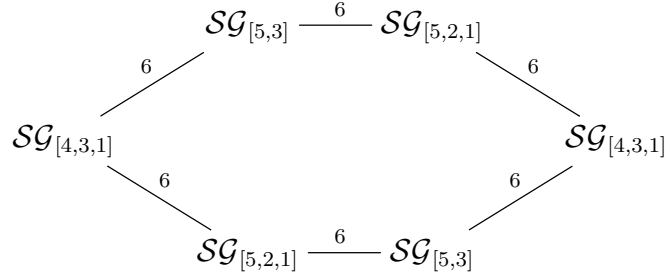
\begin{figure}[ht]
 \[
 \xymatrix{
		& \msg_{[5,3]} \ar@{-}[r]^{6} &\msg_{[5,2,1]} \ar@{-}[dr]^{6}& \\
		\msg_{[4,3,1]} \ar@{-}[ur]^{6}\ar@{-}[dr]^{6} &&&\msg_{[4,3,1]} \\
		&\msg_{[5,2,1]} \ar@{-}[r]^{6} & \msg_{[5,3]} \ar@{-}[ur]^{6} &
}
\]
\caption{A graph $\mg$ represented by the isomorphism types of components in $\mg|_{[8]}$ and the connection of these components via 6-edges. Here, $\mg$ satisfies the Commuting Property, $\mg|_{[8]}$ and $\mg|_{[2,9]}$ are SDEGs, but $\mg$ is not an SDEG.\label{fig: SDEG cover}}
\end{figure}
\end{remark}

\section{Open Problems}\label{s:open}

We conclude with some interesting open problems.  

\begin{enumerate}
\item What are the Coxeter-Knuth relations, graphs and Little bumps in
  other Coxeter group types?  Tao Kai Lam described Coxeter-Knuth
  relations in type $D$ \cite{lam1995b}.  We have not found an analog
  of the Little bump algorithm that commutes with these relations.

\item In type $A$, the simple part of every Kazhdan-Lusztig graph is a
  Coxeter-Knuth graph and vice versa as mentioned in the introduction.
  This is not true in type $B$.  What set of relations goes with the
  Kazhdan-Lusztig graphs in general?  This would also generalize the
  RSK algorithm and Knuth/DEG relations.   

\item  What is the significance of the Little bumps in Schubert calculus? 

\item What interesting symmetric functions expand as a positive sum of
  Schur Q's?  Are there natural expansions of certain symmetric
  functions first into peak quasisymmetric functions?

\item What is the diameter of the largest connected component of a
  Coxeter-Knuth graph for permutations or signed permutations of
  length $n$?

\end{enumerate}

  

\section{Acknowledgments}\label{s:ack}
Many thanks to Andrew Crites, Mark Haiman, Tao Kai Lam, Brendan
Pawlowski, Peter Winkler and an anonymous referee for helpful discussions on this work.


\begin{thebibliography}{10}

\bibitem{assaf2014shifted}
{\sc S.~{Assaf}}, {\em {Shifted dual equivalence and Schur P-positivity}},
  ArXiv:1402.2570,  (2014).

\bibitem{Assaf2008crystals}
{\sc S.~H. Assaf}, {\em A combinatorial realization of {S}chur-{W}eyl duality
  via crystal graphs and dual equivalence graphs}, in 20th {A}nnual
  {I}nternational {C}onference on {F}ormal {P}ower {S}eries and {A}lgebraic
  {C}ombinatorics ({FPSAC} 2008), Discrete Math. Theor. Comput. Sci. Proc., AJ,
  Assoc. Discrete Math. Theor. Comput. Sci., Nancy, 2008, pp.~141--152.

\bibitem{assaf2010dual}
{\sc S.~H. Assaf}, {\em Dual equivalence graphs {I}: A combinatorial proof of
  {LLT} and {Macdonald} positivity}, arXiv preprint arXiv:1005.3759,  (2013).

\bibitem{Billera.Hsiao.vanWilligenburg}
{\sc L.~J. Billera, S.~K. Hsiao, and S.~van Willigenburg}, {\em Peak
  quasisymmetric functions and {E}ulerian enumeration}, Adv. Math., 176 (2003),
  pp.~248--276.

\bibitem{billey1998transition}
{\sc S.~Billey}, {\em Transition equations for isotropic flag manifolds},
  Discrete mathematics, 193 (1998), pp.~69--84.

\bibitem{BH}
{\sc S.~Billey and M.~Haiman}, {\em {Schubert polynomials for the classical
  groups}}, J. Amer. Math. Soc., 8 (1995), pp.~443--482.

\bibitem{b-b}
{\sc A.~Bj\"orner and F.~Brenti}, {\em Combinatorics of Coxeter groups},
  vol.~231 of Graduate Texts in Mathematics, Springer, New York, 2005.

\bibitem{CHEV}
{\sc C.~Chevalley}, {\em {Sur les D\'{e}compositions Cellulaires des Espaces
  $G/B$}}, Proceedings of Symposia in Pure Mathematics, 56 (1994).

\bibitem{chmutov2013preprint}
{\sc M.~{Chmutov}}, {\em {Type A Molecules are Kazhdan-Lusztig}}, ArXiv
  e-prints,  (2013).

\bibitem{edelman1987balanced}
{\sc P.~Edelman and C.~Greene}, {\em {Balanced tableaux}}, Advances in
  Mathematics, 63 (1987), pp.~42--99.

\bibitem{FK2}
{\sc S.~Fomin and A.~N. Kirillov}, {\em {Combinatorial $B_{n}$ analogues of
  Schubert polynomials}}, Trans. of AMS, 348 (1996), pp.~3591--3620.

\bibitem{garsia2002saga}
{\sc A.~Garsia}, {\em {The Saga of Reduced Factorizations of Elements of the
  Symmetric Group}}, Laboratoire de combinatoire et d'informatique
  math\'ematique, 2002.

\bibitem{gessel1984}
{\sc I.~M. Gessel}, {\em Multipartite {$P$}-partitions and inner products of
  skew {S}chur functions}, in Combinatorics and algebra ({B}oulder, {C}olo.,
  1983), vol.~34 of Contemp. Math., Amer. Math. Soc., Providence, RI, 1984,
  pp.~289--317.

\bibitem{haiman1989mixed}
{\sc M.~D. Haiman}, {\em On mixed insertion, symmetry, and shifted {Y}oung
  tableaux}, Journal of Combinatorial Theory, Series A, 50 (1989),
  pp.~196--225.

\bibitem{haiman1992dual}
\leavevmode\vrule height 2pt depth -1.6pt width 23pt, {\em Dual equivalence
  with applications, including a conjecture of {Proctor}}, Discrete
  Mathematics, 99 (1992), pp.~79--113.

\bibitem{hamaker2012relating}
{\sc Z.~Hamaker and B.~Young}, {\em Relating {Edelman-Greene} insertion to the
  {Little} map}, Journal of Algebraic Combinatorics,  (2013).
\newblock Accepted.

\bibitem{knutson.lam.speyer}
{\sc A.~Knutson, T.~Lam, and D.~E. Speyer}, {\em Positroid varieties: juggling
  and geometry}, Compos. Math., 149 (2013), pp.~1710--1752.

\bibitem{kraskiewicz1989reduced}
{\sc W.~Kra{\'s}kiewicz}, {\em Reduced decompositions in hyperoctahedral
  groups}, CR Acad. Sci. Paris S{\'e}r. I Math, 309 (1989), pp.~903--904.

\bibitem{kraskiewicz}
{\sc W.~Kra{\'s}kiewicz}, {\em Reduced decompositions in {Weyl} groups},
  European Journal of Combinatorics, 16 (1995), pp.~293--313.

\bibitem{lam2010stanley}
{\sc T.~Lam}, {\em {Stanley symmetric functions and Peterson algebras}}, Arxiv
  preprint arXiv:1007.2871,  (2010).

\bibitem{lam2006little}
{\sc T.~Lam and M.~Shimozono}, {\em A {L}ittle bijection for affine {S}tanley
  symmetric functions}, S\'em. Lothar. Combin., 54A (2005/07).

\bibitem{lam1995b}
{\sc T.~K. Lam}, {\em B and {D} analogues of stable {Schubert} polynomials and
  related insertion algorithms}, PhD thesis, Massachusetts Institute of
  Technology, 1995.

\bibitem{lascouxschutzenbergerschubert}
{\sc A.~Lascoux and M.-P. Sch{\"u}tzenberger}, {\em Polyn{\^o}mes de
  {Schubert}}, Comptes Rendus des S{\'e}ances de l'Acad{\'e}mie des Sciences.
  S{\'e}rie I. Math{\'e}matique, 294 (1982), pp.~447--450.

\bibitem{lenart2012chev}
{\sc C.~{Lenart} and M.~{Shimozono}}, {\em {Equivariant K-Chevalley Rules for
  Kac-Moody Flag Manifolds}}, ArXiv e-prints,  (2012).

\bibitem{little2003combinatorial}
{\sc D.~Little}, {\em {Combinatorial aspects of the Lascoux-Sch{\"u}}tzenberger
  tree}, Advances in Mathematics, 174 (2003), pp.~236--253.

\bibitem{Macdonald1995}
{\sc I.~G. Macdonald}, {\em Symmetric functions and {H}all polynomials}, Oxford
  Mathematical Monographs, The Clarendon Press Oxford University Press, New
  York, second~ed., 1995.

\bibitem{roberts2013dual}
{\sc A.~Roberts}, {\em Dual equivalence graphs revisited and the explicit
  {Schur} expansion of a family of {LLT} polynomials}, Journal of Algebraic
  Combinatorics, 39 (2014), pp.~389--428.

\bibitem{sagan1991}
{\sc B.~Sagan}, {\em {The Symmetric Group}}, Wadsworth, Inc., Belmont, CA,
  1991.

\bibitem{sagan1987shifted}
{\sc B.~E. Sagan}, {\em {Shifted tableaux, {S}chur $Q$-functions, and a
  conjecture of {R. S}tanley}}, Journal of Combinatorial Theory, Series A, 45
  (1987), pp.~62--103.

\bibitem{schocker}
{\sc M.~Schocker}, {\em The peak algebra of the symmetric group revisited},
  Adv. Math., 192 (2005), pp.~259--309.

\bibitem{stanley2001enumerative}
{\sc R.~Stanley}, {\em {Enumerative Combinatorics}}, vol.~2, Cambridge Univ Pr,
  2001.

\bibitem{stanleysymm}
{\sc R.~P. Stanley}, {\em On the number of reduced decompositions of elements
  of {Coxeter} groups}, European Journal of Combinatorics, 5 (1984),
  pp.~359--372.

\bibitem{stembridge1997enriched}
{\sc J.~R. Stembridge}, {\em Enriched {$P$}-partitions}, Trans. Amer. Math.
  Soc., 349 (1997), pp.~763--788.

\bibitem{worley1984theory}
{\sc D.~R. Worley}, {\em A theory of shifted Young tableaux}, PhD thesis,
  Massachusetts Institute of Technology, 1984.

\end{thebibliography}
\end{document}